\definecolor{gray}{gray}{0.7}
\definecolor{Gray}{gray}{0.3}
\numberwithin{equation}{section}
\theoremstyle{break}
 \newtheorem{theorem}{Theorem}[section]
 \newtheorem{proposition}[theorem]{Proposition}
 \newtheorem{lemma}[theorem]{Lemma}
 \theoremstyle{definition}
 \newtheorem{definition}[theorem]{Definition}
 \newtheorem{remark}[theorem]{Remark}
 \newtheorem{example}[theorem]{Example}
\def\C{\mathbb C}
\def\R{\mathbb R}
\def\Q{\mathbb Q}
\def\Z{\mathbb Z}
\def\x{x}
\def\a{a}
\def\b{b}
\def\j{r}
\DeclareMathOperator{\Reduce}{Red}
\DeclareMathOperator{\Root}{Root}
\DeclareMathOperator{\wt}{wt}
\DeclareMathOperator{\pt}{pt}
\DeclareMathOperator{\pr}{pr}
\DeclareMathOperator{\diag}{diag}
\DeclareMathOperator{\rank}{rank}
\DeclareMathOperator{\Sym}{Sym}
\DeclareMathOperator{\Vol}{Vol}
\DeclareMathOperator{\GL}{GL}
\DeclareMathOperator{\Pet}{Pet}
\DeclareMathOperator{\Perm}{X}
\DeclareMathOperator{\X}{X}
\DeclareMathOperator{\Hess}{Hess}
\DeclareMathOperator{\Conv}{ConvexHull}
\newcommand{\Flag}{Fl}
\begin{document}
  
\title[Mixed Eulerian numbers and Peterson Schubert calculus]{Mixed Eulerian numbers and Peterson Schubert calculus}
\author[T. Horiguchi]{Tatsuya Horiguchi}
\address{National Institute of Technology, Ube College, 2-14-1, Tokiwadai, Ube, Yamaguchi, Japan 755-8555}
\email{tatsuya.horiguchi0103@gmail.com}

\subjclass[2020]{Primary 14N10, 14N15, 55N91}

\keywords{mixed Eulerian numbers, flag varieties, Peterson varieties, Schubert calculus, equivariant cohomology.}

\begin{abstract}
Let $\Phi$ be a root system. Postnikov introduced and studied the mixed $\Phi$-Eulerian numbers.
These numbers indicate the mixed volumes of $\Phi$-hypersimplices. 
As specializations of these numbers, one can obtain the usual Eulerian numbers, the Catalan numbers, and the binomial coefficients.
Recent work of Berget--Spink--Tseng gave a simple computation for the mixed $\Phi$-Eulerian numbers when $\Phi$ is of type $A$. 
In this paper we connect a relation between mixed $\Phi$-Eulerian numbers and Peterson Schubert calculus. 
By using the connection, we provide a combinatorial model for the computation of Berget--Spink--Tseng in terms of left-right diagrams which were introduced by Abe--Horiguchi--Kuwata--Zeng for the purpose of Peterson Schubert calculus.
We also derive a simple computation for the mixed $\Phi$-Eulerian numbers in arbitrary Lie types from Peterson Schubert calculus.
\end{abstract}

\maketitle

\setcounter{tocdepth}{1}

\tableofcontents

\section{Introduction}
\label{sect:Intro}

Let $\Phi$ be a crystallographic root system of rank $n$. 
Let $\Lambda$ be the associated integer weight lattice and $\Lambda_{\R}=\Lambda \otimes \R$ the weight space.
The associated Weyl group $W$ acts on the weight space $\Lambda_{\R}$.
For $\chi \in \Lambda_{\R}$, the \emph{weight polytope} $P_\Phi(\chi)$ is defined to be the convex hull
of the Weyl group orbit of $\chi$:
\begin{align*}
P_\Phi(\chi):=\Conv\{ w(\chi) \in \Lambda_{\R} \mid w \in W \}.
\end{align*}
In type $A$ this polytope is called a \emph{permutohedron}.
Fix a set of simple roots $\Sigma \coloneqq \{\alpha_1, \ldots , \alpha_n\} \subset \Phi$.
Postnikov gave a formula of the volume of the weight polytope $P_\Phi(\chi)$ in \cite{Pos}.
Here, the volume form on $\Lambda_{\R}$ is normalized so that the volume of the parallelepiped generated by the simple roots $\alpha_1, \ldots , \alpha_n$ is $1$.

Let $\varpi_1,\ldots,\varpi_n$ be the fundamental weights.
We take $\chi=u_1\varpi_1+\cdots+u_n\varpi_n$ and consider the associated weight polytope $P_\Phi(\chi)$.
Its volume is a homogeneous polynomial $V_\Phi$ of degree $n$ in the variables $u_1,\ldots,u_n$:
\begin{align*}
V_\Phi(u_1,\ldots,u_n) \coloneqq {\rm volume \ of \ } P_\Phi(u_1\varpi_1+\cdots+u_n\varpi_n).
\end{align*}
The \emph{mixed $\Phi$-Eulerian numbers} $A^{\Phi}_{c_1,\ldots,c_n}$, for $c_1, \ldots , c_n \geq 0$ with $c_1 +\cdots+c_n =n$, are defined in \cite{Pos} as the coefficients of the polynomial 
\begin{align*} 
V_\Phi(u_1,\ldots,u_n) = \sum_{c_1,\ldots,c_n} A^{\Phi}_{c_1,\ldots,c_n} \frac{u_1^{c_1}}{c_1!} \cdots \frac{u_{n}^{c_{n}}}{c_{n}!}.
\end{align*}
When $\Phi$ is of type $A$, we call simply these numbers \emph{mixed Eulerian numbers}.
The mixed $\Phi$-Eulerian number means the mixed volume of the $\Phi$-hypersimplices multiplied by $n!$ (see \cite[Sections~16 and 18]{Pos}).
These numbers include many classical combinatorial numbers such as the Catalan numbers, binomial coefficients and Eulerian numbers (\cite{Pos}).
Also, there are various combinatorial formulas for $A^{\Phi}_{c_1,\ldots,c_n}$ (\cite{Cro,Liu,Pos}).
Recently, Nadeau and Tewari gave a beautiful relation between mixed $\Phi$-Eulerian numbers and the intersection number of the toric variety associated with a weight polytope and a Schubert variety (\cite{NT2}). 
Also, Berget, Spink, and Tseng gave a simple computation for the mixed Eulerian numbers (in type $A$) in the context of matroids (\cite{BST}).
In this paper we first see a combinatorial interpretation for their computation in terms of \emph{left-right diagrams} which are introduced in \cite{AHKZ} to compute structure constants of Schubert divisors in Peterson variety.
We then generalize the simple computation of \cite{BST} to other Lie types.
More specifically, we derive a simple computation for mixed $\Phi$-Eulerian numbers from a geometry of Peterson variety in all Lie types. 

Let $G$ be a simply connected semisimple algebraic group over $\C$ and fix a Borel subgroup $B$ of $G$. 
The Lie algebras of $G$ and $B$ are denoted by $\mathfrak{g}$ and $\mathfrak{b}$, respectively.
We also denote by $\Phi$ the associated root system. 
Let $N \in \mathfrak{g}$ be a regular nilpotent element. 
The \emph{Peterson variety} $\Pet_{\Phi}$ is defined to be the following subvariety of the flag variety $G/B$: 
\begin{align*}
\Pet_{\Phi} \coloneqq \left\{gB \in G/B \mid \mbox{Ad}(g^{-1})(N) \in \mathfrak{b} \oplus \bigoplus_{i=1}^{n} \mathfrak{g}_{-\alpha_i} \right\}
\end{align*}
where $\mathfrak{g}_{-\alpha_i}$ denotes the root space associated to the negative simple root $-\alpha_i$.
This variety arises in the study of the quantum cohomology of flag varieties (\cite{Kos, Rie}).
The geometry and topology of the Peterson varieties have been much studied.
It is known that $\Pet_\Phi$ is irreducible and its complex dimension is equal to $n=\rank(\Phi)$ (\cite{Pre18}).
The Peterson variety is singular in general (\cite[Theorem~4]{IY}, \cite[Theorem~6]{Kos}), while its cohomology ring $H^*(\Pet_\Phi)$ is a Poincar\'e duality algebra (\cite[Corollary~1.2]{AHMMS}).

Here and below, we assume that $\Phi$ is an irreducible root system.
Recall that the set of vertices of the Dynkin diagram of $\Phi$ is in one-to-one correspondence with the simple system $\Sigma = \{\alpha_1, \ldots , \alpha_n\}$. 
We assume to be fixed an ordering of the simple roots as given in \cite{Hum1}.
A subset of simple roots $K \subset \Sigma$ is called \emph{connected} if the induced
Dynkin diagram with the set of vertices $K$ is a connected subgraph of the Dynkin diagram of $\Phi$.

We now explain our computation for the mixed $\Phi$-Eulerian numbers.
Let $\sigma_{s_i}$ denote the Schubert class in the cohomology\footnote{In this paper, all cohomology groups will be taken with real coefficients.} $H^2(G/B)$ associated with $s_i \in W$ where $s_i$ is the simple reflection associated to the simple root $\alpha_i$.
By abuse of notation, we denote by $\varpi_i \in H^2(\Pet_{\Phi})$ the image of the Schubert class $\sigma_{s_i} \in H^2(G/B)$ under the restriction map $H^*(G/B) \to H^*(\Pet_{\Phi})$.
The main theorem is as follows.

\begin{theorem} \label{theorem:intro}
Let $\Phi$ be an irreducible root system.
Let $c_1,\ldots, c_n$ be non-negative integers with $c_1 +\cdots+c_n = n$.
\begin{enumerate}
\item The mixed $\Phi$-Eulerian number $A^{\Phi}_{c_1,\ldots,c_n}$ is equal to
\begin{align*} 
A^{\Phi}_{c_1,\ldots,c_n} = \int_{\Pet_{\Phi}} \varpi_1^{c_1}\varpi_2^{c_2}\cdots\varpi_n^{c_n}. 
\end{align*}
\item If $K \subset \Sigma = \{\alpha_1, \ldots , \alpha_n\}$ is connected and $K \ni \alpha_i$, then we have 
\begin{align*}
\varpi_i \cdot \prod_{\alpha_k \in K} \varpi_{k} = \sum_{J \subset \Sigma: \, {\rm connected} \atop J \supset K \, {\rm and } \, |J|=|K|+1} m_{i,K}^J \prod_{\alpha_j \in J} \varpi_{j}. 
\end{align*}
The coefficients $m_{i,K}^J$ are explicitly given. $($See Table~$\ref{tab:A list of values}$ in Section~$\ref{sect: Mixed Eulerian numbers and Peterson Schubert calculus}$.$)$
\item We have
\begin{align*} 
\int_{\Pet_{\Phi}} \varpi_1\varpi_2\cdots\varpi_n = \frac{|W|}{\det (C_{\Phi})}, 
\end{align*}
where $C_{\Phi}$ is the associated Cartan matrix.
Note that the right hand side can be explicitly computed. $($See Table~$\ref{tab:A list of values mPhi}$ in Section~$\ref{sect:Any_Lie_types}$.$)$
\end{enumerate}
\end{theorem}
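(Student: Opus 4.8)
The plan is to use Part~(1) as a bridge between combinatorics and geometry, and Parts~(2)--(3) as the computational engine, exploiting the structural fact that for irreducible $\Phi$ the classes $\prod_{\alpha_k\in K}\varpi_k$, as $K$ ranges over the connected subsets of $\Sigma$, form a homogeneous basis of $H^*(\Pet_\Phi)$ (their number matches the Betti numbers, and in top degree the only connected subset of size $n$ is $\Sigma$ itself). For Part~(1), expanding $(\sum_i u_i\varpi_i)^n$ by the multinomial theorem shows that the asserted formula for all $A^\Phi_{c_1,\ldots,c_n}$ is equivalent to the single polynomial identity
\begin{align*}
V_\Phi(u_1,\ldots,u_n)=\frac{1}{n!}\int_{\Pet_\Phi}\Big(\sum_{i=1}^n u_i\varpi_i\Big)^n .
\end{align*}
I would prove this by comparison with the toric (permutohedral) variety $\mathcal X$ whose moment polytope is $P_\Phi(\chi)$: by the toric dictionary of Klyachko and Postnikov \cite{Pos} one has $n!\,V_\Phi(u)=\int_{\mathcal X}(\sum_i u_i D_i)^n$ for the torus-invariant divisors $D_i$ attached to the $\varpi_i$, so it suffices to match two degree-$n$ volume polynomials of $n$-dimensional varieties. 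I would establish this matching either through a flat degeneration of $\Pet_\Phi$ carrying the classes $\varpi_i$ to the $D_i$, or by computing both volume polynomials equivariantly and checking agreement term by term.

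For Part~(2) I would pass to the $S$-equivariant cohomology $H^*_S(\Pet_\Phi)$, where $S\subset T$ is the one-dimensional torus acting on $\Pet_\Phi$; its fixed points are indexed by the subsets $K\subseteq\Sigma$, and Harada--Tymoczko give closed formulas for the restrictions of the equivariant lifts of the $\varpi_i$ to each fixed point. Since the GKM description realizes $H^*_S(\Pet_\Phi)$ inside a product of polynomial rings with cup product computed pointwise, I would compute $\varpi_i\cdot\prod_{\alpha_k\in K}\varpi_k$ by multiplying restrictions fixed point by fixed point and then re-expanding in the basis $\{\prod_{\alpha_j\in J}\varpi_j\}$. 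Degree and support considerations force only connected $J\supset K$ with $|J|=|K|+1$ to occur, and the resulting linear system is triangular with respect to inclusion of subsets, so solving it reads off the coefficients $m_{i,K}^J$; running this along the Dynkin diagram type by type produces the explicit Cartan-integer values in the table.

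For Part~(3) the product $\varpi_1\cdots\varpi_n=\prod_{\alpha_j\in\Sigma}\varpi_j$ is the unique top-degree basis element, so $\int_{\Pet_\Phi}\varpi_1\cdots\varpi_n$ is a single number, which I would evaluate by Atiyah--Bott localization for the $S$-action: summing $(\varpi_1\cdots\varpi_n)|_{p}/e(T_p\Pet_\Phi)$ over the fixed points $p$ using the Harada--Tymoczko restrictions and the $S$-weights on the tangent data. I expect the factor $|W|$ to arise from the fixed-point/chamber count and $\det(C_\Phi)=[\Lambda:Q]$, the index of the root lattice $Q$ in the weight lattice $\Lambda$, from the change of normalization between the $\varpi_i$ and the simple-root volume form. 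As a consistency check, Part~(1) identifies this integral with the coefficient of $u_1\cdots u_n$ in $V_\Phi$, namely $A^\Phi_{1,\ldots,1}$, which can be read off directly from the weight polytope.

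The principal obstacle is the geometric identity underlying Part~(1): $\Pet_\Phi$ is only $S$-invariant (not $T$-invariant) and is singular, so neither the full moment-map picture nor naive smooth localization applies directly. Establishing that its divisor intersection numbers coincide with those of the permutohedral toric variety---equivalently, that $\frac{1}{n!}\int_{\Pet_\Phi}(\sum_i u_i\varpi_i)^n$ is literally the weight-polytope volume---is the crux and will likely require an explicit toric degeneration or a careful equivariant volume computation reconciling the $2^n$ Peterson fixed points with Postnikov's formula. Once this bridge and the fixed-point restriction formulas are secured, the remaining work in Parts~(2) and~(3) is intricate but routine GKM bookkeeping, and feeding the resulting constants into Part~(1) yields the desired combinatorial formula for every $A^\Phi_{c_1,\ldots,c_n}$.
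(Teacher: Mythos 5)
Your overall architecture mirrors the paper's: identify $A^{\Phi}_{c_1,\ldots,c_n}$ with intersection numbers on the toric variety $X_\Phi$ via Postnikov's volume formula, transfer them to $\Pet_\Phi$, and then compute with a Monk-type rule plus an evaluation of the top class. But there is a genuine gap at exactly the step you yourself flag as the crux: you never establish the identity $\frac{1}{n!}\int_{\Pet_\Phi}\bigl(\sum_i u_i\varpi_i\bigr)^n = V_\Phi(u_1,\ldots,u_n)$, saying only that it ``will likely require an explicit toric degeneration or a careful equivariant volume computation.'' Without this bridge, parts (1) and (3) remain unproved. The paper closes it by invoking a known theorem (\cite{AFZ}, Corollary~3.9): $\Pet_\Phi$ and $X_\Phi$ are regular Hessenberg varieties attached to the same Hessenberg space (regular nilpotent versus regular semisimple element), and their cohomology classes in $H^*(G/B)$ coincide, so integrals of classes restricted from $G/B$ agree. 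Your degeneration idea is morally the right mechanism --- that is how such comparison results are proved --- but as written it is a plan, not a proof.

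Two further points would fail as stated. First, your structural claim that the monomials $\prod_{\alpha_k\in K}\varpi_k$ over \emph{connected} $K\subseteq\Sigma$ form a basis of $H^*(\Pet_\Phi)$ is false: $\dim_\R H^*(\Pet_\Phi)=2^n$, the number of \emph{all} subsets of $\Sigma$ (in type $A_3$ there are only $7$ connected subsets but total Betti number $8$). The correct statement (\cite{HaTy}, \cite{Dre2}) is that the Peterson Schubert classes $p_{v_K}$ for \emph{arbitrary} $K\subseteq\Sigma$ form a basis, each proportional to $\prod_{\alpha_k\in K}\varpi_k$; connectedness enters only because the non-connected terms happen to drop out of the product $\varpi_i\cdot\prod_{\alpha_k\in K}\varpi_k$, and that vanishing itself requires an argument (in the paper it is Drellich's Monk rule together with Remark~\ref{remark:GiambelliPet}). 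Since your part (2) re-expands products in this alleged basis and solves a linear system over it, the setup needs repair, even though the final form of the expansion is correct. Second, for part (3) you propose Atiyah--Bott localization on $\Pet_\Phi$ itself; but $\Pet_\Phi$ is singular and invariant only under a one-dimensional subtorus, so the smooth ABBV formula does not apply off the shelf, and your heuristic that $|W|$ arises ``from the fixed-point/chamber count'' is off: the $S^1$-fixed points of $\Pet_\Phi$ number $2^n$, not $|W|$. The paper instead evaluates $\int_{X_\Phi}\varpi_1\cdots\varpi_n = A^{\Phi}_{1,\ldots,1} = |W|/\det(C_\Phi)$ on the smooth toric side, citing \cite{Cro}, and transfers it across the same \cite{AFZ} bridge --- which is precisely your ``consistency check,'' promoted to the actual proof.
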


Theorem~\ref{theorem:intro} yields a simple computation for the mixed $\Phi$-Eulerian numbers. 
In fact, by using $(2)$ repeatedly, a product $\varpi_1^{c_1}\varpi_2^{c_2}\cdots\varpi_n^{c_n}$  ($c_1, \ldots , c_n \geq 0, c_1 +\cdots+c_n =n$) can be easily expressed as some monomial $q \cdot \varpi_1\varpi_2\cdots\varpi_n$ $(q \in \Q)$.
Note that we can explicitly compute the rational number $q$ by using Table~$\ref{tab:A list of values}$ in Section~\ref{sect: Mixed Eulerian numbers and Peterson Schubert calculus}. 
Taking the integration over $\Pet_{\Phi}$ for the equality $\varpi_1^{c_1}\varpi_2^{c_2}\cdots\varpi_n^{c_n}=q \cdot \varpi_1\varpi_2\cdots\varpi_n$, we obtain $A^{\Phi}_{c_1,\ldots,c_n} = q \cdot \frac{|W|}{\det (C_{\Phi})}$ from $(1)$ and $(3)$.
As remarked before, this simple computation for the mixed $\Phi$-Eulerian numbers was given in \cite{BST} in the context of matroids when $\Phi$ is of type $A$.

Schubert calculus on Peterson variety was developed by Harada and Tymoczko (\cite{HaTy}).
We call it Peterson Schubert calculus (see Section~\ref{sect: Mixed Eulerian numbers and Peterson Schubert calculus} for details). 
There are several results on Peterson Schubert calculus (\cite{AHKZ, BaHa, Dre2, GoGo, GMS, HaTy}).
One can see that Peterson Schubert calculus is related to the mixed $\Phi$-Eulerian numbers by Theorem~\ref{theorem:intro}. 
In particular, we can provide an efficient computation for Peterson Schubert calculus by Theorem~\ref{theorem:intro}~(2) (Remark~\ref{rem:Peterson Schubert calculus}).

The paper is organized as follows.
After reviewing the definition of permutohedra and their volume in Section~\ref{sect:Permutohedron}, we see a topological interpretation for the volume of  permutohedra by using the equivariant cohomology of permutohedral varieties in Section~\ref{sect:equivariant_cohomology}. 
In Section~\ref{sect:Mixed Eulerian numbers_TypeA}, after recalling the definition of mixed Eulerian numbers (in type $A$), we interpret these numbers as an integration over the permutohedral variety. 
We then provide a combinatorial formula for the mixed Eulerian numbers in Section~\ref{sect:simple_computation_TypeA}.
Some results are extended to arbitrary Lie types in Section~\ref{sect:Any_Lie_types}. 
We then derive a simple computation for the mixed $\Phi$-Eulerian numbers (Theorem~\ref{theorem:intro}) from Peterson Schubert calculus in Section~\ref{sect: Mixed Eulerian numbers and Peterson Schubert calculus}.
Here, the coefficients $m_{i,K}^J$ in Theorem~\ref{theorem:intro}~(2) can be computed by a similar argument in \cite{AHKZ}, so we explain their computations in Appendix~\ref{sect:computation for miKJ}.
In the proof of Theorem~\ref{theorem:intro}, we will use the result that the Poincar\'e duals of the permutohedral variety and the Peterson variety are the same in the flag variety in all Lie types which was proved by Abe, Fujita, and Zeng (\cite[Corollary 3.9]{AFZ}). (In type $A$, the proof was given by Abe, DeDieu, Galetto, and Harada in \cite[Corollary~4.3]{ADGH}.)
Theorem~\ref{theorem:intro}~(3) follows from Theorem~\ref{theorem:intro}~(1) and the result in PhD Thesis of Croitoru (\cite[Proposition~2.7.5]{Cro}), while it also follows from the result of Klyachko (\cite[Theorem~3]{Kly}) together with the result of Abe, Fujita, and Zeng explained above.
Note that the result of Klyachko was proved in \cite{Kly2} in Russian, and it is probably a direct proof without going through the mixed $\Phi$-Eulerian numbers.
In other words, Theorem~\ref{theorem:intro}~(3) may be proved without going through the mixed $\Phi$-Eulerian numbers.
We prove the result of Klyachko by using equivariant cohomology of the permutohedral varieties (without going through the mixed $\Phi$-Eulerian numbers) in Appendix~\ref{sect: Integration}.

\bigskip

\section{Permutohedron} \label{sect:Permutohedron}

In this section we review some results on permutohedra in \cite{Pos}. 

Let $n$ be a positive integer. 
The permutation group $S_n$ on $n$ letters acts on $\R^n$ by permuting coordinates.
For $a_1, \ldots, a_n \in \R$, the \emph{permutohedron} $P_n(a_1,\ldots,a_n)$ is defined to be the convex hull of points of the $S_n$-orbit of $(a_1, \ldots, a_n)$:
\begin{align*}
P_n(a_1, \ldots, a_n):=\Conv\{ (a_{w(1)}, \ldots, a_{w(n)}) \in \R^n \mid w \in S_n \},
\end{align*}
which is at most $(n-1)$-dimensional, sitting inside the affine hyperplane $H_c = \{(t_1,\ldots, t_n) \in \R^n \mid t_1+ \cdots +t_n = c \}$ where $c = a_1 + \cdots + a_n$.
Without loss of generality, we may assume that $a_1 \geq a_2 \geq \dots \geq a_n$.

For a polytope $P \subset H_c$, its volume $\Vol P$ is defined to be the usual $(n-1)$-dimensional volume of the polytope $\pi(P) \subset \R^{n-1}$, where $\pi$ is the projection $\pi: (t_1,\ldots,t_n) \mapsto (t_1,\ldots,t_{n-1})$. 
If $c \in \Z$, then the volume of any parallelepiped formed by generators of the integer lattice $\Z^n \cap H_c$ is $1$. 
Postnikov gave a formula of the volume of the permutohedron $P_n(a_1,\ldots,a_n)$.

\begin{theorem}$($\cite[Theorem~3.1]{Pos}$)$
Let $t_1, \ldots, t_n \in \R$ be distinct real numbers.
The volume of the permutohedron $P_n=P_n(a_1,\ldots,a_n)$ is equal to 
\begin{align*}
\Vol P_n = \frac{1}{(n-1)!} \sum_{w \in S_n} \frac{(a_1 t_{w(1)}+\cdots+a_n t_{w(n)})^{n-1}}{(t_{w(1)}-t_{w(2)})(t_{w(2)}-t_{w(3)})\cdots(t_{w(n-1)}-t_{w(n)})}.
\end{align*}
\end{theorem}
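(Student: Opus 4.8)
The plan is to realize the stated expression as an instance of the Brion--Lawrence--Varchenko vertex formula for the exponential integral over a simple polytope. Throughout I work in the linear hyperplane $V = \{x \in \R^n \mid x_1 + \cdots + x_n = 0\}$, a translate of which contains $P_n$, equipped with the lattice $\Z^n \cap V$; by the normalization fixed in the statement, the associated volume form is exactly the one appearing in $\Vol$. First I reduce to the generic case $a_1 > a_2 > \cdots > a_n$. For such $a_i$ the $S_n$-orbit of $(a_1,\ldots,a_n)$ consists of $n!$ distinct points, these are precisely the vertices of $P_n$, and $P_n$ is a simple $(n-1)$-dimensional polytope. Both sides of the asserted identity depend continuously on $(a_1,\ldots,a_n)$ for fixed distinct $t_1,\ldots,t_n$ (the right-hand side is even polynomial in the $a_i$, since its denominators involve only the $t_i$), so once the identity is established on the dense set of strictly decreasing $a_i$ it follows in general.

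Next I record the local geometry at each vertex. I index the vertices by $\sigma \in S_n$, writing $v_\sigma = (a_{\sigma(1)}, \ldots, a_{\sigma(n)})$. The edges of $P_n$ emanating from $v_\sigma$ correspond to interchanging the two coordinates carrying values $a_k$ and $a_{k+1}$ that are adjacent in the sorted order; a direct computation shows that the primitive edge vector pointing from $v_\sigma$ into $P_n$ along the $k$-th such edge is $u_{\sigma,k} = e_{\sigma^{-1}(k+1)} - e_{\sigma^{-1}(k)}$, for $k = 1, \ldots, n-1$. These $n-1$ vectors form a $\Z$-basis of $\Z^n \cap V$ (they are the simple roots of a reordered type $A$ system), so their determinant with respect to the normalized volume form is $\pm 1$. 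Writing $\ell(x) = \langle t, x\rangle = \sum_i t_i x_i$ for the linear functional attached to the parameters, the hypothesis that the $t_i$ be distinct is exactly the condition $\ell(u_{\sigma,k}) \neq 0$ for every edge, i.e. that $\ell$ is non-constant along each edge.

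The key step is then the cone decomposition. For the tangent cone $C_\sigma = v_\sigma + \sum_{k} \R_{\geq 0}\, u_{\sigma,k}$ at $v_\sigma$, a linear change of variables gives the meromorphic identity
\begin{align*}
\int_{C_\sigma} e^{\ell(x)}\, dx = e^{\ell(v_\sigma)} \cdot \frac{\lvert \det(u_{\sigma,1}, \ldots, u_{\sigma,n-1})\rvert}{\prod_{k=1}^{n-1}\bigl(-\ell(u_{\sigma,k})\bigr)} = \frac{e^{\ell(v_\sigma)}}{\prod_{k=1}^{n-1}\bigl(t_{w(k)} - t_{w(k+1)}\bigr)},
\end{align*}
where $w = \sigma^{-1}$; here I have used $\lvert\det\rvert = 1$ and the sign computation $-\ell(u_{\sigma,k}) = t_{\sigma^{-1}(k)} - t_{\sigma^{-1}(k+1)}$, so that no stray signs survive. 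Brion's theorem asserts that the sum of the vertex-cone integrals equals the (entire) integral over the polytope,
\begin{align*}
\int_{P_n} e^{\ell(x)}\, dx = \sum_{\sigma \in S_n} \frac{e^{\ell(v_\sigma)}}{\prod_{k=1}^{n-1}\bigl(t_{w(k)} - t_{w(k+1)}\bigr)},
\end{align*}
and since $\ell(v_\sigma) = a_1 t_{w(1)} + \cdots + a_n t_{w(n)}$ with $w = \sigma^{-1}$, this is already the claimed formula in exponential-generating form.

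Finally I extract the volume. Replacing $t$ by $s\,t$ and expanding $e^{s\,\ell(v_\sigma)}$ in powers of $s$, the right-hand side becomes $s^{-(n-1)}$ times a power series in $s$, whereas the left-hand side $\int_{P_n} e^{s\,\ell}\,dx$ is entire in $s$ with value $\Vol P_n$ at $s = 0$. Comparing coefficients forces the coefficients of $s^{-j}$ for $j > 0$ to vanish and identifies the constant term, giving
\begin{align*}
\Vol P_n = \frac{1}{(n-1)!}\sum_{\sigma \in S_n} \frac{\ell(v_\sigma)^{n-1}}{\prod_{k=1}^{n-1}\bigl(t_{w(k)} - t_{w(k+1)}\bigr)},
\end{align*}
which is the desired identity. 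I expect the main obstacle to lie in the second and third steps: pinning down the inward primitive edge vectors at every vertex and checking that the orientation together with the lattice normalization conspire to yield precisely $\prod_k (t_{w(k)} - t_{w(k+1)})$ in the denominator, with the correct overall sign and no absolute values. The remaining ingredients---Brion's theorem and the extraction of the top-degree coefficient---are then formal.
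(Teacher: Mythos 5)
Your argument is correct, and there is in fact no internal proof to compare it with: the paper imports this statement verbatim from Postnikov (\cite[Theorem~3.1]{Pos}) and never proves it. Judged on its own, your Brion--Lawrence--Varchenko route is essentially the standard derivation (and in substance the one behind the cited source). The key checks all go through: for strictly decreasing $a_i$ the orbit points are exactly the vertices and the polytope is simple; the primitive inward edge vectors at $v_\sigma$ are indeed $u_{\sigma,k}=e_{\sigma^{-1}(k+1)}-e_{\sigma^{-1}(k)}$ (the positivity $a_k-a_{k+1}>0$ is what orients them inward), and they form a $\Z$-basis of $\Z^n\cap V$, so $\lvert\det\rvert=1$ in the normalization of Section~\ref{sect:Permutohedron}; the sign bookkeeping gives $-\ell(u_{\sigma,k})=t_{w(k)}-t_{w(k+1)}$ with $w=\sigma^{-1}$; the continuous Brion identity requires no rationality of the vertices, so arbitrary real $a_i$ are allowed; and the scaling $t\mapsto st$ cleanly isolates the degree-$(n-1)$ coefficient, with the vanishing of the coefficients of $s^{-j}$, $j>0$, recovering the fact that divided symmetrization kills polynomials of degree less than $n-1$. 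Finally, the reduction by continuity to distinct $a_i$ is legitimate since the right-hand side is a polynomial in the $a_i$. One remark worth making: your computation is precisely the polytope-side counterpart of the paper's own Lemma~\ref{lemma:Topology_divided_symmetrization_typeA}, where the Atiyah--Bott--Berline--Vergne localization \eqref{eq:ABBVformula} over the permutohedral variety $\Perm_n$ produces the same sum over $S_n$, with the equivariant Euler classes \eqref{eq:Euler_class_Perm} playing the role of your cone denominators $\prod_k\bigl(t_{w(k)}-t_{w(k+1)}\bigr)$; the vertex cone at $v_\sigma$ corresponds to the fixed point $w\in\Perm_n^T$.
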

Note that all $t_i$'s on the right hand side cancel each other after the symmetrization.
Motivated by this formula, the divided symmetrization is introduced in \cite[Section~3]{Pos}.
For a polynomial $f(t_1,\ldots,t_n)$, its \emph{divided symmetrization} is defined by
\begin{align} \label{eq:Div_Sym}
\langle f \rangle =\langle f(t_1,\ldots,t_n) \rangle:= \sum_{w \in S_n} w \left(\frac{f(t_1,\ldots,t_n)}{(t_{1}-t_{2})(t_{2}-t_{3})\cdots(t_{n-1}-t_{n})} \right),
\end{align}
where the symmetric group $S_n$ acts on the polynomial ring $\R[t_1,\ldots,t_n]$ by permuting the variables $t_1,\ldots,t_n$.
Then we can rewrite the volume of the permutohedron $P_n=P_n(a_1,\ldots,a_n)$ as  
\begin{align} \label{eq:Vol_Div_Sym}
\Vol P_n = \frac{1}{(n-1)!} \langle (a_1 t_{1}+\cdots+a_n t_{n})^{n-1} \rangle.
\end{align}
The divided symmetrization has been studied in \cite{Amd, NT1, Petr}.
We will see a topological interpretation for the divided symmetrization in the next section.

\bigskip

\section{Equivariant cohomology of a permutohedral variety} \label{sect:equivariant_cohomology}

The aim of this section is to establish a topological interpretation for the divided symmetrization. 
We refer to \cite{And} for equivariant cohomology.

Recall that the flag variety $\Flag(\C^n)$ in type $A_{n-1}$ is the set of nested subspaces $V_{\bullet}:=(V_1 \subset V_2 \subset \cdots \subset V_n=\C^n)$ of $\C^n$ where $\dim_{\C} V_i =i$ for all $i=1,\ldots,n$.
Let $S$ be a diagonal matrix with distinct eigenvalues.
Then it is known that a subvariety defined by
\begin{align} \label{eq:Perm_in_Flag}
\Perm_n \coloneqq \{V_{\bullet} \in \Flag(\C^n) \mid SV_i \subset V_{i+1} \ \textrm{for all } i=1,2,\ldots,n-1 \}
\end{align}
is a toric variety (\cite[Theorem~11]{dMPS}).
This toric variety is called a permutohedral variety.
The permutohedral variety $\Perm_n$ is smooth and its complex dimension is $n-1$ (\cite[Theorem~6 and Corollary~9]{dMPS}).

We discuss the equivariant cohomology rings of $\Flag(\C^n)$ and $\Perm_n$. 
Let $B$ be the set of upper triangular matrices in the general linear group $\GL_n(\C)$ and $T$  the set of diagonal matrices in $B$.
As is well-known, the flag variety $\Flag(\C^n)$ can be identified with $\GL_n(\C)/B$ and the torus $T$ naturally acts on the flag variety $\GL_n(\C)/B$ by left multiplication.
The $T$-action on $\Flag(\C^n)$ preserves $\Perm_n$ (\cite[Proposition~2]{dMPS}).

Let $E_i$ be a subbundle of the trivial vector bundle $\Flag(\C^n) \times \C^n$ over $\Flag(\C^n)$ whose fiber at a flag $V_{\bullet}$ is just $V_i$, which is called the $i$-th tautological vector bundle.
Consider the quotient line bundle $L_i\coloneqq E_i/E_{i-1}$ and its dual $L_i^*$.
We also denote the restriction $L_i^*|_{\Perm_n}$ by the same symbol $L_i^*$ when there are no confusion.
For $1 \leq i \leq n$, we denote by 
\begin{align} 
&\x_i^T \coloneqq c_1^T(L_i^*)=-c_1^T(L_i), \label{eq:tauiT} \\
&\x_i \coloneqq c_1(L_i^*)=-c_1(L_i), \label{eq:taui}
\end{align}
the $T$-equivariant (or ordinary) first Chern class of the line bundle $L_i^*$. 
Note that if we regard $L_i^*$ as the line bundle over $\Perm_n$, then 
$\x_i^T$ is an element of $H^2_T(\Perm_n)$.
Let $\C_i$ be the one dimensional representation of $T$ via the $i$-th projection $g=\diag(g_1, \ldots ,g_n) \mapsto g_i$, namely $g \cdot z=g_i z$ for $g \in T$ and $z \in \C_i$.
We denote by $(\C_i)^*$ the dual representation of $\C_i$ and set
\begin{align} \label{eq:ti}
t_i=c_1^T(\C_i^*)=-c_1^T(\C_i) \in H^2_T(\pt), \ 1 \leq i \leq n.
\end{align}
In this setting, as is well-known, we have $H^*_T(\pt) = \R[t_1, \ldots , t_n]$.

A useful technique in torus equivariant cohomology is the restriction to the fixed point set of the torus action. 
It is well-known that the $T$-fixed point set $\Flag(\C^n)^T$ is given by the set of permutation flags specified by $V_i := \textrm{span}_{\C}\{e_w(1), \ldots , e_w(i)\}$ for $w \in S_n$, where $\{e_1, \ldots , e_n\}$ is the standard basis of $\C^n$ (e.g. \cite[Lemma~2 in \S10.1]{Fult97}).
We may identify the $T$-fixed point set $\Flag(\C^n)^T$ with the permutation group $S_n$.
One can see that the $T$-fixed point set $\Perm_n^T$ consists of $\Flag(\C^n)^T$, namely $\Perm_n^T$ can be identified with $S_n$ again (\cite[Proposition~3]{dMPS}).
Based on the above discussions, we may consider the following commutative diagram:
\begin{equation} \label{eq:CD}
\begin{CD}
H^{\ast}_{T}(\Flag(\C^n))@>{\iota_1}>> \displaystyle H^*_T(\Flag(\C^n)^T) \cong \bigoplus_{w\in S_n} \R[t_1,\dots,t_n]\\
@V{}VV @VV{\textrm{identity map}}V\\
H^{\ast}_{T}(\Perm_n)@>{\iota_2}>> \displaystyle H^*_T(\Perm_n^T) \cong \bigoplus_{w\in S_n} \R[t_1,\dots,t_n]
\end{CD}
\end{equation}
where all the maps are induced from the inclusion maps on underlying spaces.
Since the odd degree cohomology groups of $\Flag(\C^n)$ and $\Perm_n$ vanish, both $\iota_1$ and $\iota_2$ are injective.
By these injectivity results, we may identify each of $H^*_T(\Flag(\C^n))$ and $H^*_T(\Perm_n)$ as a subring of $\bigoplus_{w \in S_n} \R[t_1,\ldots,t_n]$.
For an element $\alpha \in H^*_T(\Flag(\C^n))$ (or $\beta \in H^*_T(\Perm_n)$), we denote by $\alpha|_w$ (or $\beta|_w$) its $w$-th component of $\bigoplus_{w \in S_n} \R[t_1,\ldots,t_n]$.
One can easily see that the $w$-th component of $\x_i^T \in H^2_T(\Flag(\C^n))$ in \eqref{eq:tauiT} is given by $t_{w(i)}$ for $w \in S_n$.
It then follows from the commutative diagram \eqref{eq:CD} that the $w$-th component of $\x_i^T \in H^2_T(\Perm_n)$ in \eqref{eq:tauiT} is equal to 
\begin{align} \label{eq:Restriction_tauiTinPerm}
\x_i^T|_w=t_{w(i)} \ \textrm{for} \ w \in S_n.
\end{align}

We now explain the divided symmetrization given in \eqref{eq:Div_Sym} in terms of the equivariant cohomology of $\Perm_n$.
The collapsing map $\pr : \Perm_n \to \{\pt \}$ induces the equivariant Gysin map
$\pr_{!}^T: H^*_T(\Perm_n) \to H^{*-2(n-1)}_T(\pt)$.
By the Atiyah–Bott–Berline–Vergne formula (\cite{AtBo, BeVe}), we can compute the equivariant Gysin map by fixed point data as follows:
\begin{align} \label{eq:ABBVformula}
\pr_{!}^T(\alpha)=\sum_{w \in S_n} \frac{\alpha|_w}{e_w}
\end{align}
where $e_w$ denotes the $T$-equivariant Euler class of the normal bundle to the fixed point $w \in S_n \cong \Perm_n^T$.
It follows from \cite[Lemma~7]{dMPS} that the tangent space of $\Perm_n$ at a fixed point $w $ can be decomposed into 
\begin{align*}
\bigoplus_{i=1}^{n-1} \left( \C_{w(i+1)} \otimes \left(\C_{w(i)}\right)^* \right)
\end{align*}
as $T$-representations.
Hence, the $T$-equivariant Euler class $e_w$ of $\Perm_n$ at $w \in S_n$ is given by 
\begin{align} \label{eq:Euler_class_Perm}
e_w=(t_{w(1)}-t_{w(2)})(t_{w(2)}-t_{w(3)})\cdots(t_{w(n-1)}-t_{w(n)}).
\end{align}
We here recall that the equivariant Gysin map $\pr_{!}^T$ and the ordinary Gysin map $\pr_{!}$ commute with the forgetful maps:
\begin{equation} \label{eq:CD_1}
\begin{CD}
H^{\ast}_{T}(\Perm_n)@>{\pr_{!}^T}>> H^{*-2(n-1)}_T(\pt) \\
@V{}VV @VV{}V\\
H^{\ast}(\Perm_n)@>{\pr_{!}}>> H^{*-2(n-1)}(\pt)
\end{CD}
\end{equation}
Note that the ordinary Gysin map $\pr_{!}$ derives the usual Poincar\'e dual. 
That is, for $\alpha \in H^{2k}(\Perm_n)$ and $\beta \in H^{2(n-1)-2k}(\Perm_n)$, we have
\begin{align*}
\pr_{!}(\alpha\beta) = \int_{\Perm_n} \alpha \beta.
\end{align*}
The following lemma gives a topological interpretation for the divided symmetrization.

\begin{lemma} \label{lemma:Topology_divided_symmetrization_typeA}
Let $f(t_1,\ldots,t_n)$ be a homogeneous polynomial of degree $2(n-1)$ where $\deg t_i=2$ for $1 \leq i \leq n$. 
Then, its divided symmetrization in \eqref{eq:Div_Sym} is given by
\begin{align*}
\langle f(t_1,\ldots,t_n) \rangle = \int_{\Perm_n} f(\x_1,\ldots,\x_n)
\end{align*}
where $\x_i \in H^2(\Perm_n)$ is defined in \eqref{eq:taui}.
In particular, by \eqref{eq:Vol_Div_Sym} the volume of the permutohedron $P_n=P_n(a_1,\ldots,a_n)$ is equal to 
\begin{align} \label{eq:VolumePermTopology}
\Vol P_n = \frac{1}{(n-1)!} \int_{\Perm_n} (a_1 \x_{1}+\cdots+a_n \x_{n})^{n-1}.
\end{align}
\end{lemma}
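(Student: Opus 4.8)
The plan is to realize the divided symmetrization as the equivariant Gysin pushforward of an equivariant lift of $f(\x_1,\ldots,\x_n)$, and then descend to ordinary cohomology through the forgetful maps. First I would consider the equivariant class $f(\x_1^T,\ldots,\x_n^T) \in H^{2(n-1)}_T(\Perm_n)$ obtained by substituting the equivariant first Chern classes $\x_i^T$ from \eqref{eq:tauiT} into $f$. Using the restriction formula \eqref{eq:Restriction_tauiTinPerm}, its $w$-th fixed-point component is computed componentwise as
\[
f(\x_1^T,\ldots,\x_n^T)|_w = f(\x_1^T|_w,\ldots,\x_n^T|_w) = f(t_{w(1)},\ldots,t_{w(n)}),
\]
since restriction to a fixed point is a ring homomorphism.

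Next I would apply the Atiyah--Bott--Berline--Vergne formula \eqref{eq:ABBVformula} together with the Euler class computation \eqref{eq:Euler_class_Perm}, which yields
\[
\pr_{!}^T(f(\x_1^T,\ldots,\x_n^T)) = \sum_{w \in S_n} \frac{f(t_{w(1)},\ldots,t_{w(n)})}{(t_{w(1)}-t_{w(2)})\cdots(t_{w(n-1)}-t_{w(n)})}.
\]
With the convention that $w$ acts by $t_i \mapsto t_{w(i)}$, the right-hand side is literally the divided symmetrization $\langle f \rangle$ of \eqref{eq:Div_Sym}; should the opposite convention be in force, a reindexing $w \mapsto w^{-1}$ permutes the summands and gives the same total. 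A useful consequence is automatic here: since $\deg f = 2(n-1)$ equals twice the complex dimension of $\Perm_n$, the pushforward lands in $H^0_T(\pt) = \R$, so this a priori rational-function sum is in fact a constant. This is precisely the topological reason the variables $t_i$ cancel after symmetrization.

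Finally I would transfer the identity to ordinary cohomology using the commutative square \eqref{eq:CD_1}. The forgetful map $H^*_T(\Perm_n) \to H^*(\Perm_n)$ sends $\x_i^T$ to $\x_i$ by \eqref{eq:tauiT} and \eqref{eq:taui}, hence sends $f(\x_1^T,\ldots,\x_n^T)$ to $f(\x_1,\ldots,\x_n)$, and it intertwines $\pr_{!}^T$ with $\pr_{!}$. Because the equivariant pushforward already lies in degree $0$, where the forgetful map $H^0_T(\pt) \to H^0(\pt)$ is the identity on $\R$, I obtain $\pr_{!}(f(\x_1,\ldots,\x_n)) = \langle f \rangle$. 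Since the ordinary Gysin map realizes Poincar\'e duality, $\pr_{!}(f(\x_1,\ldots,\x_n)) = \int_{\Perm_n} f(\x_1,\ldots,\x_n)$, giving the main assertion. The volume formula \eqref{eq:VolumePermTopology} then follows by specializing to $f = (a_1 t_1 + \cdots + a_n t_n)^{n-1}$ and invoking \eqref{eq:Vol_Div_Sym}.

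The one point requiring care is the passage from equivariant to ordinary cohomology: one must check that the equivariant lift restricts to the expected fixed-point data and, crucially, that the degree bookkeeping forces the output into $H^0$, where the forgetful map is an isomorphism onto $H^0(\pt)$. Once the ABBV computation is recognized to reproduce \eqref{eq:Div_Sym} verbatim, the remainder is a formal diagram-chase through \eqref{eq:CD_1}, and no nontrivial calculation remains.
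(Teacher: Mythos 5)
Your proposal is correct and follows essentially the same route as the paper's proof: lift $f$ to the equivariant class $f(\x_1^T,\ldots,\x_n^T)$, compute its fixed-point restrictions via \eqref{eq:Restriction_tauiTinPerm}, apply the Atiyah--Bott--Berline--Vergne formula \eqref{eq:ABBVformula} with the Euler classes \eqref{eq:Euler_class_Perm} to recover the divided symmetrization, and then descend to ordinary cohomology through the commutative square \eqref{eq:CD_1}. The extra remarks you add (the $w$ versus $w^{-1}$ reindexing and the degree count forcing the pushforward into $H^0_T(\pt)$, where the forgetful map is the identity) are points the paper leaves implicit, but they do not change the argument.
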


\begin{proof}
By using \eqref{eq:Restriction_tauiTinPerm}, \eqref{eq:ABBVformula} and \eqref{eq:Euler_class_Perm}, the image of $f(\x^T_1,\ldots,\x^T_n)$ under the equivariant Gysin map $\pr_{!}^T$ can be computed as
\begin{align*}
\pr_{!}^T(f(\x^T_1,\ldots,\x^T_n))=&\sum_{w \in S_n} \frac{f(\x^T_1|_w,\ldots,\x^T_n|_w)}{(t_{w(1)}-t_{w(2)})(t_{w(2)}-t_{w(3)})\cdots(t_{w(n-1)}-t_{w(n)})} \\
=&\sum_{w \in S_n} \frac{f(t_{w(1)},\ldots,t_{w(n)})}{(t_{w(1)}-t_{w(2)})(t_{w(2)}-t_{w(3)})\cdots(t_{w(n-1)}-t_{w(n)})}  \\
=&\langle f(t_1,\ldots,t_n) \rangle.
\end{align*}
On the other hand, it follows from the commutative diagram \eqref{eq:CD_1} that
\begin{align*}
\pr_{!}^T(f(\x^T_1,\ldots,\x^T_n))=\pr_{!}(f(\x_1,\ldots,\x_n))=\int_{\Perm_n} f(\x_1,\ldots,\x_n),
\end{align*}
as desired.
\end{proof}

\begin{remark}
The argument above can be extended to a general framework for regular semisimple Hessenberg varieties $\Hess(S,h)$ by modifying the definition of the divided symmetrization.
It means the degree of $\Hess(S,h)$ used in the computation of \cite[(6.9)]{ADGH}.
\end{remark}

\bigskip

\section{Mixed Eulerian numbers} \label{sect:Mixed Eulerian numbers_TypeA}

In this section we describe mixed Eulerian numbers in terms of the integration over $\Perm_n$. 
We begin with the definition of mixed Eulerian numbers which is introduced in \cite{Pos}.

We return to the permutohedron $P_n= P_n(a_1,\ldots,a_n)$. 
Let us use the coordinates $u_1, \ldots ,u_{n-1}$ related to $a_1,\ldots,a_n$ by
\begin{align*}
u_1=a_1-a_2, \ u_2=a_2-a_3, \ \ldots, \ u_{n-1}=a_{n-1}-a_n.
\end{align*}
Then, the volume of $P_n$ can be written as
\begin{align} \label{eq:Mix.Eul.num}
\Vol P_n = \sum_{c_1,\ldots,c_{n-1}} A_{c_1,\ldots,c_{n-1}} \frac{u_1^{c_1}}{c_1!} \cdots \frac{u_{n-1}^{c_{n-1}}}{c_{n-1}!},
\end{align}
where the sum is over non-negative integers $c_1, \ldots , c_{n-1}$ with $c_1 +\cdots+c_{n-1} = n-1$ (\cite[Section~16]{Pos}). 
The coefficients $A_{c_1,\ldots,c_{n-1}}$ are called the \emph{mixed Eulerian numbers}.
The mixed Eulerian number means the mixed volume of hypersimplices multiplied by $(n-1)!$. 

We now expand the right hand side of \eqref{eq:VolumePermTopology} in terms of the variables $u_1,\ldots,u_{n-1}$. 
For this purpose, set 
\begin{align} \label{eq:varpi_i}
\varpi_i=\x_1+\cdots+\x_i
\end{align}
for $1 \leq i \leq n-1$.
Here, $\x_k$ denotes an element of $H^2(\Perm_n)$ given in \eqref{eq:taui}.
It is well-known that $\x_1+\cdots+\x_n=0$ and hence we have
\begin{align*}
a_1 \x_{1}+\cdots+a_n \x_{n}=&a_1\varpi_1+a_2(\varpi_2-\varpi_1)+\cdots+a_{n-1}(\varpi_{n-1}-\varpi_{n-2})+a_n(-\varpi_{n-1}) \\
=&u_1\varpi_1+\cdots+u_{n-1}\varpi_{n-1}.
\end{align*}
Therefore, we conclude that
\begin{align*}
\frac{1}{(n-1)!} \int_{\Perm_n} (a_1 \x_{1}+\cdots+a_n \x_{n})^{n-1} =& \frac{1}{(n-1)!} \int_{\Perm_n} (u_1\varpi_1+\cdots+u_{n-1}\varpi_{n-1})^{n-1} \\
=&\sum_{c_1,\ldots,c_{n-1}} \left( \int_{\Perm_n} \varpi_1^{c_1}\varpi_2^{c_2}\cdots\varpi_{n-1}^{c_{n-1}} \right) \frac{u_1^{c_1}}{c_1!} \cdots \frac{u_{n-1}^{c_{n-1}}}{c_{n-1}!},
\end{align*}
where the sum is over non-negative integers $c_1, \ldots , c_{n-1}$ with $c_1 +\cdots+c_{n-1} = n-1$.
From this together with \eqref{eq:VolumePermTopology} we obtain the following proposition.

\begin{proposition} \label{proposition:mixed Eulerian numberTypeA}
Let $c_1,\ldots, c_{n-1}$ be non-negative integers with $c_1 +\cdots+c_{n-1} = n-1$.
Then, the mixed Eulerian number $A_{c_1,\ldots,c_{n-1}}$ is given by
\begin{align*} 
A_{c_1,\ldots,c_{n-1}} = \int_{\Perm_n} \varpi_1^{c_1}\varpi_2^{c_2}\cdots\varpi_{n-1}^{c_{n-1}}.
\end{align*}
\end{proposition}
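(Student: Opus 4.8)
The plan is to read off the identity by comparing two expressions for $\Vol P_n$, both regarded as polynomials in the independent variables $u_1,\ldots,u_{n-1}$, and then matching coefficients monomial by monomial. Essentially all the substance has already been packaged into the topological interpretation of the divided symmetrization (Lemma~\ref{lemma:Topology_divided_symmetrization_typeA}) and into the change of variables carried out just above the statement, so what remains is a formal extraction of coefficients.

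First I would record the two competing expansions of $\Vol P_n$. The definitional one is \eqref{eq:Mix.Eul.num}, whose coefficients are by definition the mixed Eulerian numbers $A_{c_1,\ldots,c_{n-1}}$. The geometric one comes from the topological volume formula \eqref{eq:VolumePermTopology}: using the relation $\x_1+\cdots+\x_n=0$ in $H^*(\Perm_n)$ together with $\varpi_i=\x_1+\cdots+\x_i$, the linear form $a_1\x_1+\cdots+a_n\x_n$ becomes $u_1\varpi_1+\cdots+u_{n-1}\varpi_{n-1}$. Expanding the $(n-1)$-st power multinomially and integrating term by term over $\Perm_n$ produces an expansion of $\Vol P_n$ in the $u_i$ whose coefficient of $\tfrac{u_1^{c_1}}{c_1!}\cdots\tfrac{u_{n-1}^{c_{n-1}}}{c_{n-1}!}$ is exactly $\int_{\Perm_n}\varpi_1^{c_1}\cdots\varpi_{n-1}^{c_{n-1}}$.

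Finally I would justify matching coefficients. Since $\Vol P_n$ depends only on the differences $a_i-a_{i+1}$, the variables $u_1,\ldots,u_{n-1}$ are free coordinates and both expansions are genuine polynomials in them; two polynomials that agree as functions of the $u_i$ must have equal coefficients. Comparing the coefficient of $\tfrac{u_1^{c_1}}{c_1!}\cdots\tfrac{u_{n-1}^{c_{n-1}}}{c_{n-1}!}$ in the two expansions then yields the asserted formula. The only point requiring care — and the nearest thing to an obstacle — is precisely this independence of the $u_i$, i.e.\ confirming that the volume polynomial is well defined as a homogeneous polynomial of degree $n-1$ in $u_1,\ldots,u_{n-1}$ (equivalently, that the translation-invariance of the permutohedron lets the volume descend to a function of the differences alone). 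Once that is granted, the conclusion is immediate and no further computation is needed.
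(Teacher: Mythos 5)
Your proposal is correct and follows essentially the same route as the paper: the paper also compares the defining expansion \eqref{eq:Mix.Eul.num} with the topological volume formula \eqref{eq:VolumePermTopology}, rewrites $a_1\x_1+\cdots+a_n\x_n$ as $u_1\varpi_1+\cdots+u_{n-1}\varpi_{n-1}$ using $\x_1+\cdots+\x_n=0$, expands multinomially, and matches coefficients in the $u_i$. Your extra remark justifying that the $u_i$ are free coordinates is a reasonable point of care, but otherwise there is no substantive difference.
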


\begin{remark}
There are several proofs of Proposition~\ref{proposition:mixed Eulerian numberTypeA}. 
Berget, Spink, and Tseng gave a proof from a point of view of the connection between degrees of mixed intersections and normalized mixed volumes (\cite[Lemma~7.3]{BST}).
Also, a formula \cite[(5.2)]{NT2} given by Nadeau and Tewari together with Klyachko's result (\cite[Theorem~3]{Kly}, \cite{Kly2}) yields Proposition~\ref{proposition:mixed Eulerian numberTypeA}. 
For the result of Klyachko, see Proposition~\ref{prop:Top_Class} and Remark~\ref{remark:Klyachko} in Section~\ref{sect:Any_Lie_types}.
\end{remark}

\bigskip

\section{Computation for mixed Eulerian numbers and left-right diagrams} \label{sect:simple_computation_TypeA}

Postnikov provided in \cite{Pos} a combinatorial formula for the mixed $\Phi$-Eulerian numbers in terms of certain binary trees.
Recently, a simple computation for the mixed Eulerian numbers was given in \cite{BST} in the context of matroids.
In this section we explain the result of \cite{BST} reformulated in language of Peterson varieties.
Then we provide a combinatorial formula for the mixed Eulerian numbers in terms of left-right diagrams which are introduced in \cite{AHKZ} for the purpose of Peterson Schubert calculus.
We remark that Nadeau and Tewari introduce a $q$-deformation of mixed Eulerian numbers and give a combinatorial formula for them in \cite{NT3}.

We begin with the definition of Peterson variety in type $A$.
Let $N$ be a regular nilpotent matrix of size $n$, i.e. a matrix whose Jordan form consists of exactly one Jordan block with corresponding eigenvalue equal to 0. 
The \emph{Peterson variety} $\Pet_n$ (in type $A_{n-1}$) is defined to be the following subvariety of the flag variety $\Flag(\C^n)$:
\begin{align*}
\Pet_n=\{V_{\bullet} \in \Flag(\C^n) \mid NV_i \subset V_{i+1} \ {\rm for \ any} \ i=1,2,\ldots,n-1 \},
\end{align*}
which is irreducible and $\dim_{\C} \Pet_n=n-1$ (\cite[Theorem~6]{Kos}).

By abuse of notation, we denote by the same symbol $\x_i$ the image of $\x_i \in H^2(\Flag(\C^n))$ in \eqref{eq:taui} under the restriction map $H^*(\Flag(\C^n)) \to H^*(\Pet_n)$. 
By \cite[Corollary~4.3]{ADGH} the Poincar\'e dual of the permutohedral variety $\Perm_n$ coincides with the Poincar\'e dual of the Peterson variety $\Pet_n$ in $H^*(\Flag(\C^n))$.
This implies that 
\begin{align*}
\int_{\Perm_n} \varpi_1^{c_1}\varpi_2^{c_2}\cdots\varpi_{n-1}^{c_{n-1}}=\int_{\Pet_n} \varpi_1^{c_1}\varpi_2^{c_2}\cdots\varpi_{n-1}^{c_{n-1}}
\end{align*}
for non-negative integers $c_1,\ldots, c_{n-1}$ with $c_1 +\cdots+c_{n-1} = n-1$.
Hence, we can rewrite Proposition~\ref{proposition:mixed Eulerian numberTypeA} as follows.

\begin{proposition} \label{prop:mixed Eulerian numberTypeA Pet}
Let $c_1,\ldots, c_{n-1}$ be non-negative integers with $c_1 +\cdots+c_{n-1} = n-1$.
Then, the mixed Eulerian number $A_{c_1,\ldots,c_{n-1}}$ is equal to
\begin{align*} 
A_{c_1,\ldots,c_{n-1}} = \int_{\Pet_n} \varpi_1^{c_1}\varpi_2^{c_2}\cdots\varpi_{n-1}^{c_{n-1}}.
\end{align*}
\end{proposition}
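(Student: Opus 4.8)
The plan is to reduce Proposition~\ref{prop:mixed Eulerian numberTypeA Pet} to the already-established Proposition~\ref{proposition:mixed Eulerian numberTypeA} by transporting the integral from the permutohedral variety $\Perm_n$ to the Peterson variety $\Pet_n$. Since Proposition~\ref{proposition:mixed Eulerian numberTypeA} already identifies the mixed Eulerian number $A_{c_1,\ldots,c_{n-1}}$ with $\int_{\Perm_n} \varpi_1^{c_1}\cdots\varpi_{n-1}^{c_{n-1}}$, the entire content of the claim is the equality of the two integrals
\begin{align*}
\int_{\Perm_n} \varpi_1^{c_1}\cdots\varpi_{n-1}^{c_{n-1}} = \int_{\Pet_n} \varpi_1^{c_1}\cdots\varpi_{n-1}^{c_{n-1}}.
\end{align*}
Both $\Perm_n$ and $\Pet_n$ sit inside $\Flag(\C^n)$ as subvarieties of the same complex dimension $n-1$, and the classes $\varpi_i = \x_1 + \cdots + \x_i$ are defined via restriction of the \emph{same} ambient classes $\x_i \in H^2(\Flag(\C^n))$ to each subvariety. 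This is precisely the setup in which a comparison of fundamental-class data applies.

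First I would invoke the result of \cite[Corollary~4.3]{ADGH}, cited in the excerpt, which states that the Poincar\'e dual $[\Perm_n] \in H^{2(n-1)}(\Flag(\C^n))$ of the permutohedral variety coincides with the Poincar\'e dual $[\Pet_n]$ of the Peterson variety in the cohomology of the ambient flag variety. The key mechanism is then the projection formula: for any class $\eta \in H^{2(n-1)}(\Flag(\C^n))$ and any subvariety $Y \hookrightarrow \Flag(\C^n)$ with inclusion $\iota_Y$, one has $\int_Y \iota_Y^*(\eta) = \int_{\Flag(\C^n)} \eta \smile [Y]$, where $[Y]$ is the Poincar\'e dual class. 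Applying this with $\eta$ a polynomial lift of degree $n-1$ in the ambient classes $\x_i$ whose restriction realizes $\varpi_1^{c_1}\cdots\varpi_{n-1}^{c_{n-1}}$, I would compute
\begin{align*}
\int_{\Perm_n} \varpi_1^{c_1}\cdots\varpi_{n-1}^{c_{n-1}} = \int_{\Flag(\C^n)} \eta \smile [\Perm_n] = \int_{\Flag(\C^n)} \eta \smile [\Pet_n] = \int_{\Pet_n} \varpi_1^{c_1}\cdots\varpi_{n-1}^{c_{n-1}},
\end{align*}
where the middle equality is exactly the coincidence of Poincar\'e duals. This chain yields the desired identity, and combining it with Proposition~\ref{proposition:mixed Eulerian numberTypeA} completes the proof.

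The point I would be most careful about is the well-definedness of the restricted classes $\varpi_i$ and the compatibility of the two restriction maps $H^*(\Flag(\C^n)) \to H^*(\Perm_n)$ and $H^*(\Flag(\C^n)) \to H^*(\Pet_n)$ with the ambient integration. Since $\varpi_i$ on each subvariety is, by the abuse of notation set up in the excerpt, the pullback of a fixed ambient class, the product $\varpi_1^{c_1}\cdots\varpi_{n-1}^{c_{n-1}}$ on either variety is the restriction of the same ambient product $\eta$; this is what makes the projection formula applicable with a single $\eta$ on both sides. I would note that one need not identify $\eta$ explicitly—only that it is some ambient class restricting correctly—because the integral depends solely on $\eta \smile [Y]$ evaluated on the ambient fundamental class. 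The main obstacle, such as it is, is not analytical but bookkeeping: one must confirm that degree considerations ($\deg \eta = 2(n-1)$ matching the real codimension of both subvarieties) and the top-degree Poincar\'e duality on $H^*(\Pet_n)$, recalled in the excerpt via \cite[Proposition~10.6]{AHHM} and \cite[Corollary~1.2]{AHMMS}, make the integral $\int_{\Pet_n}$ a well-defined pairing despite the singularity of $\Pet_n$ for $n \geq 3$.
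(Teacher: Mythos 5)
Your proposal is correct and follows essentially the same route as the paper: the paper's proof also combines Proposition~\ref{proposition:mixed Eulerian numberTypeA} with the coincidence of the Poincar\'e duals of $\Perm_n$ and $\Pet_n$ in $H^*(\Flag(\C^n))$ from \cite[Corollary~4.3]{ADGH} to transfer the integral from the permutohedral variety to the Peterson variety. The only difference is that you spell out, via the projection formula $\int_Y \iota_Y^*(\eta) = \int_{\Flag(\C^n)} \eta \smile [Y]$, the mechanism that the paper compresses into the phrase ``this implies,'' which is a reasonable elaboration rather than a deviation.
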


\begin{proposition} $($\cite[Proposition~4.10]{AHKZ}$)$ \label{prop:Top_Class_TypeA}
We have
\begin{align*} 
\int_{\Pet_n} \varpi_1\varpi_2 \cdots \varpi_{n-1} =(n-1)!.
\end{align*}
\end{proposition}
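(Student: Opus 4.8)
The plan is to reduce the statement to a single evaluation of the divided symmetrization and then to carry out that evaluation by induction on $n$. The first observation is that the integrand is nothing but the mixed Eulerian number $A_{1,1,\ldots,1}$: taking $c_1=\cdots=c_{n-1}=1$ in Proposition~\ref{prop:mixed Eulerian numberTypeA Pet} gives $\int_{\Pet_n}\varpi_1\cdots\varpi_{n-1}=A_{1,\ldots,1}$. Now Proposition~\ref{proposition:mixed Eulerian numberTypeA}, combined with Lemma~\ref{lemma:Topology_divided_symmetrization_typeA} and the definition \eqref{eq:varpi_i} of $\varpi_i$, identifies this mixed Eulerian number with a divided symmetrization. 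Thus the claim is equivalent to the purely combinatorial identity
$$\left\langle \prod_{i=1}^{n-1}(t_1+t_2+\cdots+t_i)\right\rangle=(n-1)!,$$
where $\langle\,\cdot\,\rangle$ is the divided symmetrization of \eqref{eq:Div_Sym}. Note that the displayed polynomial is homogeneous of degree $n-1$ and does not involve $t_n$, so the hypotheses of Lemma~\ref{lemma:Topology_divided_symmetrization_typeA} are met.

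I would prove this identity by induction on $n$, the cases $n=1$ (empty product, value $1=0!$) and $n=2$ (where $\langle t_1\rangle=1$) being immediate. For the inductive step I would single out the outermost factor, writing $t_1+\cdots+t_{n-1}=\sigma-t_n$ with $\sigma=t_1+\cdots+t_n$, and split the sum accordingly. Since $\sigma$ is symmetric it separates cleanly from the $S_n$-average, and the remaining task is to express the $n$-variable divided symmetrization of $t_n\prod_{i=1}^{n-2}(t_1+\cdots+t_i)$ in terms of the $(n-1)$-variable divided symmetrization of $\prod_{i=1}^{n-2}(t_1+\cdots+t_i)$. The factor $(n-1)$ is expected to emerge from this passage, which would close the induction.

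The hard part will be precisely this last reduction. Unlike an ordinary symmetric average, the divided symmetrization is built on the path denominator $(t_{w(1)}-t_{w(2)})\cdots(t_{w(n-1)}-t_{w(n)})$, which does not factor when one deletes a variable, so moving from $n$ to $n-1$ variables demands a genuine recursion for $\langle\,\cdot\,\rangle$ — obtained by grouping the permutations according to the value placed in the last slot and resumming the resulting partial fractions — rather than a formal manipulation. As an independent check and an alternative route for the evaluation, one can read $A_{1,\ldots,1}$ as $(n-1)!$ times the mixed volume of the hypersimplices $\Delta_1,\ldots,\Delta_{n-1}$ (each taken once) in the root--lattice normalization of Section~\ref{sect:Permutohedron}; the claim then becomes the assertion that this normalized mixed volume equals $1$, which one verifies by a Minkowski-sum computation (for $n=3$ the sum $\Delta_1+\Delta_2$ is the regular hexagon of normalized area $3$, forcing the mixed volume to be $1$).
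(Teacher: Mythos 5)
Your reduction is sound and not circular: Proposition~\ref{prop:mixed Eulerian numberTypeA Pet} rests on Proposition~\ref{proposition:mixed Eulerian numberTypeA} and the comparison of Poincar\'e duals from \cite{ADGH}, not on the present proposition, so it legitimately turns the claim into $A_{1,\ldots,1}=(n-1)!$, which Lemma~\ref{lemma:Topology_divided_symmetrization_typeA} and \eqref{eq:varpi_i} identify with the divided-symmetrization identity $\bigl\langle \prod_{i=1}^{n-1}(t_1+\cdots+t_i)\bigr\rangle=(n-1)!$. This parallels how the paper handles the general-type statement (Proposition~\ref{prop:Top_Class}, where the evaluation $A^{\Phi}_{1,\ldots,1}=|W|/\det C_{\Phi}$ is quoted from \cite{Cro}); for the type-$A$ proposition itself the paper gives no argument at all, citing \cite[Proposition~4.10]{AHKZ}. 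So a self-contained evaluation would be a genuine contribution --- but you have not supplied one.

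The gap is the inductive step, and it is not merely a detail left to the reader. Writing $\langle\cdot\rangle_m$ for the divided symmetrization in $m$ variables, your split requires two facts. First, $\langle \sigma g\rangle_n=\sigma\langle g\rangle_n$ must vanish for $g=\prod_{i=1}^{n-2}(t_1+\cdots+t_i)$; this is true because the divided symmetrization of a homogeneous polynomial of degree less than $n-1$ is a polynomial of negative degree, hence zero, but you never record this. Second, and fatally, you need $\langle t_n\,g\rangle_n=-(n-1)\,\langle g\rangle_{n-1}$, and no recursion of this shape holds for general $g$ of degree $n-2$ in $t_1,\ldots,t_{n-1}$: for $n=3$ one computes $\langle t_1t_3\rangle_3=-2=-2\langle t_1\rangle_2$, but $\langle t_2t_3\rangle_3=1$ while $-2\langle t_2\rangle_2=2$. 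Thus the passage from $n$ to $n-1$ variables cannot be a function of $\langle g\rangle_{n-1}$ alone; it must exploit the specific structure of $\prod_{i=1}^{n-2}(t_1+\cdots+t_i)$, and extracting the factor $-(n-1)$ for exactly this polynomial is the entire content of the proposition --- your plan of ``grouping permutations by the last slot and resumming partial fractions'' names the difficulty without resolving it. The alternative route has the same status: $V(\Delta_1,\ldots,\Delta_{n-1})=1$ is equivalent to the proposition, and a check for $n=3$ proves nothing for general $n$. A short honest repair inside the paper's toolkit is to specialize the input used for Proposition~\ref{prop:Top_Class} to $\Phi$ of type $A_{n-1}$, where $|W|/\det C_{\Phi}=n!/n=(n-1)!$ (this evaluation of $A_{1,\ldots,1}$ is also among the basic properties in \cite[Section~16]{Pos}); but then the proof is by citation, exactly as in the paper, rather than by the self-contained induction you set out to give.
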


\begin{lemma} $($\cite[Lemma~5.1]{AHKZ}$)$ \label{lemm:TypeA}
For $1 \leq \a \leq i \leq \b \leq n-1$, we have
\begin{align*}
\underbrace{\varpi_{\a}\varpi_{\a+1} \cdots \varpi_{i}}_{i-\a+1} \underbrace{\varpi_{i} \cdots \varpi_{\b-1} \varpi_{\b}}_{\b-i+1} = \frac{\b-i+1}{\b-\a+2} \varpi_{\a-1}\varpi_{\a} \cdots \varpi_{\b} + \frac{i-\a+1}{\b-\a+2} \varpi_{\a} \cdots \varpi_{\b}\varpi_{\b+1}
\end{align*}
in $H^*(\Pet_n)$.
Here, we take the convention $\varpi_0=\varpi_n=0$.
\end{lemma}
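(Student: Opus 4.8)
The plan is to peel off the single repeated factor and reduce the whole identity to one quadratic relation, after which everything is formal induction. Throughout I write $[p,q]:=\varpi_p\varpi_{p+1}\cdots\varpi_q$ for a consecutive product, with the conventions $[p,q]:=1$ when $p>q$ and (from $\varpi_0=\varpi_n=0$) $[p,q]=0$ as soon as the range contains $0$ or $n$. The left-hand side is then $[a,i-1]\,\varpi_i^2\,[i+1,b]$, so the entire content of the lemma sits in the square $\varpi_i^2$.

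The crux is the quadratic relation
\begin{align}\label{eq:star}
\varpi_i^2=\tfrac12\varpi_{i-1}\varpi_i+\tfrac12\varpi_i\varpi_{i+1}\qquad(1\le i\le n-1),
\end{align}
which is exactly the case $a=i=b$ and is equivalent to $\varpi_i\,\alpha_i=0$, where $\alpha_i:=2\varpi_i-\varpi_{i-1}-\varpi_{i+1}$ is the image in $H^*(\Pet_n)$ of the $i$-th simple root. This is the one genuinely geometric ingredient, and I expect it to be the main obstacle. I would derive it from Peterson Schubert calculus through the Harada--Tymoczko localization model, in which the $S^1$-fixed points of $\Pet_n$ are indexed by subsets of $\{1,\dots,n-1\}$ and the fixed-point restrictions of the $\varpi_i$ are explicit. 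Concretely, since $H^*(\Pet_n)$ is a Poincar\'e duality algebra, it suffices to check $\int_{\Pet_n}\varpi_i\,\alpha_i\,\gamma=0$ for every $\gamma$ of complementary degree, and each such integral is evaluated by Atiyah--Bott--Berline--Vergne localization at the fixed points; alternatively \eqref{eq:star} may simply be read off from the known presentation of $H^*(\Pet_n)$.

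Granting \eqref{eq:star}, I would first dispose of the boundary case $i=a$. Setting $g(a,b):=\varpi_a^2\,[a+1,b]$ and inserting \eqref{eq:star}, together with $\varpi_a[a+1,b]=[a,b]$, gives the recursion
\begin{align*}
g(a,b)=\tfrac12[a-1,b]+\tfrac12\,\varpi_a\,g(a+1,b).
\end{align*}
I then induct on $b-a$, the base case $b=a$ being \eqref{eq:star} itself. Substituting the inductive expression for $g(a+1,b)$ and using $\varpi_a[a,b]=g(a,b)$ and $\varpi_a[a+1,b+1]=[a,b+1]$ turns this into a linear equation for $g(a,b)$ whose unique solution is
\begin{align*}
g(a,b)=\frac{b-a+1}{b-a+2}\,[a-1,b]+\frac{1}{b-a+2}\,[a,b+1],
\end{align*}
which is precisely the claimed identity at $i=a$.

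Finally I would handle arbitrary $i$ by induction on the left excess $i-a$. The factorization $[a,i]\,[i,b]=\varpi_a\cdot[a+1,i]\,[i,b]$ lets me apply the inductive hypothesis to $[a+1,i]\,[i,b]$, whose excess $i-a-1$ is smaller (the base case $i-a=0$ being the previous paragraph). After that, $\varpi_a[a,b]=g(a,b)$ and $\varpi_a[a+1,b+1]=[a,b+1]$ reduce everything to the formula for $g(a,b)$ just established, and collecting the two coefficients yields $\frac{b-i+1}{b-a+2}[a-1,b]+\frac{i-a+1}{b-a+2}[a,b+1]$, as required. Every boundary situation is automatic: as soon as an index reaches $0$ or $n$ the relevant bracket vanishes by the convention $\varpi_0=\varpi_n=0$, in agreement with the degenerate forms of \eqref{eq:star}. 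Apart from \eqref{eq:star}, the two nested inductions are entirely mechanical.
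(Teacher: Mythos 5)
Your proof is correct and follows essentially the same route as the paper's: the quadratic relations $\varpi_i\alpha_i=0$ read off from the presentation $H^*(\Pet_\Phi)\cong\mathcal{R}/(\alpha_i\varpi_i \mid 1\leq i\leq n)$ of Theorem~\ref{theorem:presentation_Pet}, followed by nested inductions in which the left-hand side reappears on the right and one solves the resulting linear equation --- precisely the ``proportionality'' argument the paper uses in Appendix~\ref{sect:computation for miKJ} to prove the analogous Lemmas~\ref{lemm:TypeB-1}--\ref{lemm:TypeD-2}, and which it attributes to the proof of \cite[Lemma~5.1]{AHKZ}. Your treatment of the base cases and of the boundary conventions $\varpi_0=\varpi_n=0$ is also consistent with the paper's, so there is no gap.
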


We can compute the mixed Eulerian numbers $A_{c_1,\ldots,c_{n-1}}$ by using Propositions~\ref{prop:mixed Eulerian numberTypeA Pet} and \ref{prop:Top_Class_TypeA} and Lemma~\ref{lemm:TypeA}.

\begin{example} \label{ex:calculus_TypeA}
Let $n=9$ and take $(c_1,\ldots,c_8)=(1,0,2,3,0,0,1,1)$. 
Then, the mixed Eulerian number $A_{c_1,\ldots,c_8}$ is equal to 
\begin{align*}
A_{c_1,\ldots,c_8}=\int_{\Pet_9} \varpi_1 \varpi_3^2 \varpi_4^3 \varpi_7\varpi_8 
\end{align*}
from Proposition~\ref{prop:mixed Eulerian numberTypeA Pet}.
The product $\varpi_1 \varpi_3^2 \varpi_4^3 \varpi_7\varpi_8$ in the right hand side above can be computed by using Lemma~$\ref{lemm:TypeA}$ repeatedly as follows.  
\hspace{-15pt}
\begin{align*}
&\varpi_4 \cdot \varpi_4 \cdot \varpi_3 \cdot (\varpi_1\varpi_3\varpi_4\varpi_7\varpi_8) \\
=&\varpi_4 \cdot \varpi_4 \cdot (\varpi_1\varpi_3^2\varpi_4\varpi_7\varpi_8) \\
=&\varpi_4 \cdot \varpi_4 \cdot \left( \frac{2}{3}(\varpi_1\varpi_2\varpi_3\varpi_4\varpi_7\varpi_8)+ \frac{1}{3}(\varpi_1\varpi_3\varpi_4\varpi_5\varpi_7\varpi_8) \right) \\
=&\varpi_4 \cdot \left( \frac{2}{3}(\varpi_1\varpi_2\varpi_3\varpi_4^2\varpi_7\varpi_8)+ \frac{1}{3}(\varpi_1\varpi_3\varpi_4^2\varpi_5\varpi_7\varpi_8) \right) \\
=&\varpi_4 \cdot \Big( \frac{2}{3}\cdot\frac{4}{5}(\varpi_1\varpi_2\varpi_3\varpi_4\varpi_5\varpi_7\varpi_8) + \frac{1}{3}\cdot\frac{2}{4}(\varpi_1\varpi_2\varpi_3\varpi_4\varpi_5\varpi_7\varpi_8) \\
& \hspace{30pt} + \frac{1}{3}\cdot\frac{2}{4}(\varpi_1\varpi_3\varpi_4\varpi_5\varpi_6\varpi_7\varpi_8) \Big) \\
=&\frac{2}{3}\cdot\frac{4}{5}(\varpi_1\varpi_2\varpi_3\varpi_4^2\varpi_5\varpi_7\varpi_8) + \frac{1}{3}\cdot\frac{2}{4}(\varpi_1\varpi_2\varpi_3\varpi_4^2\varpi_5\varpi_7\varpi_8) + \frac{1}{3}\cdot\frac{2}{4}(\varpi_1\varpi_3\varpi_4^2\varpi_5\varpi_6\varpi_7\varpi_8) \\
=& \left( \frac{2}{3}\cdot\frac{4}{5}\cdot\frac{4}{6}+ \frac{1}{3}\cdot\frac{2}{4}\cdot\frac{4}{6}+ \frac{1}{3}\cdot\frac{2}{4}\cdot\frac{5}{7} \right)\varpi_1\varpi_2\varpi_3\varpi_4\varpi_5\varpi_6\varpi_7\varpi_8. 
\end{align*}
Taking the integral over $\Pet_9$ for the equality above, we obtain
\begin{align*}
A_{c_1,\ldots,c_8}
=8! \left( \frac{2}{3}\cdot\frac{4}{5}\cdot\frac{4}{6}+\frac{1}{3}\cdot\frac{2}{4}\cdot\frac{4}{6}+\frac{1}{3}\cdot\frac{2}{4}\cdot\frac{5}{7}\right)  
=14336+4480+4800=23616
\end{align*}
from Proposition~\ref{prop:Top_Class_TypeA}.
\end{example}

The computation for the mixed Eulerian numbers like Example~\ref{ex:calculus_TypeA} was given in \cite{BST} in the context of matroids.
On the other hand, this computation is deeply related with Peterson Schubert calculus. 
This connection will be discussed in Section~\ref{sect: Mixed Eulerian numbers and Peterson Schubert calculus}.
The aim of this section is to describe a combinatorial formula for the mixed Eulerian numbers given by \cite{BST} in terms of \emph{left-right diagrams} which are introduced in \cite{AHKZ} for the purpose of Peterson Schubert calculus.

For non-negative integers $c_1,\ldots, c_{n-1}$ with $c_1 +\cdots+c_{n-1} = n-1$, define $M=\{ i_1, \ldots, i_{n-1} \}$ to be the multiset such that $1 \leq i_1 \leq i_2 \leq \cdots \leq i_{n-1} \leq n-1$ and such that 
\begin{align*}
\varpi_1^{c_1}\varpi_2^{c_2}\cdots\varpi_{n-1}^{c_{n-1}}=\varpi_{i_1}\varpi_{i_2}\cdots\varpi_{i_{n-1}}.
\end{align*}
Next, for the same set of $c_1,\ldots, c_{n-1}$ define the set $J$ as the set of indices corresponding to $\varpi_j$ which appear in the monomial $\varpi_1^{c_1}\varpi_2^{c_2}\cdots\varpi_{n-1}^{c_{n-1}}$. 
In other words
\begin{align}
J \coloneqq \{j \in [n-1] \mid c_j \geq1 \}. \label{eq:diagram_J} 
\end{align}
Notice that $J$ is a set while $M$ is a multiset. 
Finally define 
\begin{align}
I \coloneqq M \setminus J. \label{eq:diagram_I} 
\end{align}

We first prepare the following two steps.
\begin{enumerate}
\item Draw a rectangular grid of size $(1+|I|) \times (n-1)$. 
On the left side of the grid, write the elements of $I$ in weakly increasing order from top to bottom, but starting at the second row. 
Along the bottom of the grid, label the columns of the grid by the elements in $[n-1]$ in increasing order, starting from the left. See Example~\ref{ex:setup_TypeA} for an example.
\end{enumerate}
For each box in the grid, we define the \textit{row number} of the box as the number which is written beside the row containing the box, and define the \textit{column number} of the box as the number which is written below the column containing the box.
\begin{enumerate}
\setcounter{enumi}{1}
\item Shade the boxes in the first row whose column numbers belong to $J (\subseteq [n-1])$. Mark each box with a cross $\times$ whose row number is the same as the column number.
\end{enumerate}

\begin{example} \label{ex:setup_TypeA}
Let $n=9$ and take $(c_1,\ldots,c_8)=(1,0,2,3,0,0,1,1)$ as in the
previous example. 
Then, we have $M=\{1,3,3,4,4,4,7,8 \}$, $J=\{1,3,4,7,8\}$, and $I=\{3,4,4 \}$.
The resulting grid is described below.
\begin{figure}[h]
\begin{center}
\begin{picture}(160,95)
\put(0,83){\colorbox{gray}}
\put(0,87){\colorbox{gray}}
\put(0,93){\colorbox{gray}}
\put(0,97){\colorbox{gray}}
\put(5,83){\colorbox{gray}}
\put(5,87){\colorbox{gray}}
\put(5,93){\colorbox{gray}}
\put(5,97){\colorbox{gray}}
\put(10,83){\colorbox{gray}}
\put(10,87){\colorbox{gray}}
\put(10,93){\colorbox{gray}}
\put(10,97){\colorbox{gray}}
\put(14,83){\colorbox{gray}}
\put(14,87){\colorbox{gray}}
\put(14,93){\colorbox{gray}}
\put(14,97){\colorbox{gray}}

\put(40,83){\colorbox{gray}}
\put(40,87){\colorbox{gray}}
\put(40,93){\colorbox{gray}}
\put(40,97){\colorbox{gray}}
\put(45,83){\colorbox{gray}}
\put(45,87){\colorbox{gray}}
\put(45,93){\colorbox{gray}}
\put(45,97){\colorbox{gray}}
\put(50,83){\colorbox{gray}}
\put(50,87){\colorbox{gray}}
\put(50,93){\colorbox{gray}}
\put(50,97){\colorbox{gray}}
\put(54,83){\colorbox{gray}}
\put(54,87){\colorbox{gray}}
\put(54,93){\colorbox{gray}}
\put(54,97){\colorbox{gray}}

\put(60,83){\colorbox{gray}}
\put(60,87){\colorbox{gray}}
\put(60,93){\colorbox{gray}}
\put(60,97){\colorbox{gray}}
\put(65,83){\colorbox{gray}}
\put(65,87){\colorbox{gray}}
\put(65,93){\colorbox{gray}}
\put(65,97){\colorbox{gray}}
\put(70,83){\colorbox{gray}}
\put(70,87){\colorbox{gray}}
\put(70,93){\colorbox{gray}}
\put(70,97){\colorbox{gray}}
\put(74,83){\colorbox{gray}}
\put(74,87){\colorbox{gray}}
\put(74,93){\colorbox{gray}}
\put(74,97){\colorbox{gray}}

\put(120,83){\colorbox{gray}}
\put(120,87){\colorbox{gray}}
\put(120,93){\colorbox{gray}}
\put(120,97){\colorbox{gray}}
\put(125,83){\colorbox{gray}}
\put(125,87){\colorbox{gray}}
\put(125,93){\colorbox{gray}}
\put(125,97){\colorbox{gray}}
\put(130,83){\colorbox{gray}}
\put(130,87){\colorbox{gray}}
\put(130,93){\colorbox{gray}}
\put(130,97){\colorbox{gray}}
\put(134,83){\colorbox{gray}}
\put(134,87){\colorbox{gray}}
\put(134,93){\colorbox{gray}}
\put(134,97){\colorbox{gray}}

\put(140,83){\colorbox{gray}}
\put(140,87){\colorbox{gray}}
\put(140,93){\colorbox{gray}}
\put(140,97){\colorbox{gray}}
\put(145,83){\colorbox{gray}}
\put(145,87){\colorbox{gray}}
\put(145,93){\colorbox{gray}}
\put(145,97){\colorbox{gray}}
\put(150,83){\colorbox{gray}}
\put(150,87){\colorbox{gray}}
\put(150,93){\colorbox{gray}}
\put(150,97){\colorbox{gray}}
\put(154,83){\colorbox{gray}}
\put(154,87){\colorbox{gray}}
\put(154,93){\colorbox{gray}}
\put(154,97){\colorbox{gray}}

\put(0,20){\framebox(20,20)}
\put(20,20){\framebox(20,20)}
\put(40,20){\framebox(20,20)}
\put(60,20){\framebox(20,20)}
\put(80,20){\framebox(20,20)}
\put(100,20){\framebox(20,20)}
\put(120,20){\framebox(20,20)}
\put(140,20){\framebox(20,20)}
\put(0,40){\framebox(20,20)}
\put(20,40){\framebox(20,20)}
\put(40,40){\framebox(20,20)}
\put(60,40){\framebox(20,20)}
\put(80,40){\framebox(20,20)}
\put(100,40){\framebox(20,20)}
\put(120,40){\framebox(20,20)}
\put(140,40){\framebox(20,20)}
\put(0,60){\framebox(20,20)}
\put(20,60){\framebox(20,20)}
\put(40,60){\framebox(20,20)}
\put(60,60){\framebox(20,20)}
\put(80,60){\framebox(20,20)}
\put(100,60){\framebox(20,20)}
\put(120,60){\framebox(20,20)}
\put(140,60){\framebox(20,20)}
\put(0,80){\framebox(20,20)}
\put(20,80){\framebox(20,20)}
\put(40,80){\framebox(20,20)}
\put(60,80){\framebox(20,20)}
\put(80,80){\framebox(20,20)}
\put(100,80){\framebox(20,20)}
\put(120,80){\framebox(20,20)}
\put(140,80){\framebox(20,20)}

\put(-10,65){$3$}
\put(-10,45){$4$}
\put(-10,25){$4$}

\put(7,5){$1$}
\put(27,5){$2$}
\put(47,5){$3$}
\put(67,5){$4$}
\put(87,5){$5$}
\put(107,5){$6$}
\put(127,5){$7$}
\put(147,5){$8$}

\put(45,67){$\times$}
\put(65,47){$\times$}
\put(65,27){$\times$}
\end{picture}
\end{center}
\end{figure}
\end{example}

We now play a combinatorial game on the prepared grid above. 
The rule of the game is inductively determined as follows.

\noindent 
\begin{enumerate}
\item[\textbf{(Game):}] Assume that some boxes in the $i$-th row are shaded ($1\le i< |I|+1$). Then shade the boxes in the $(i+1)$-th row whose column numbers are the same as those of the shaded boxes in the $i$-th row. 
In the $(i+1)$-th row, consider the (maximal) consecutive string of shaded boxes which contains the (unique) marked box in the $(i+1)$-th row. 
By construction, there exists an unshaded box to the immediate right or immediate left of this consecutive string.
Pick one such adjacent unshaded box and shade it.
We call this the \emph{additional shaded box}. 
\end{enumerate}

We define a \emph{left-right diagram} associated with $(c_1,\ldots,c_{n-1})$ as a configuration of boxes on a square grid of size $(1+|I|) \times (n-1)$ whose shaded boxes can be obtained from a game. 
The set of left-right diagrams associated with $(c_1,\ldots, c_{n-1})$ is denote by $\Delta_{c_1,\ldots, c_{n-1}}$. 

\begin{example} \label{ex:diagram_TypeA}
We take $(c_1,\ldots,c_8)=(1,0,2,3,0,0,1,1)$ given in Example~$\ref{ex:setup_TypeA}$.
Then, the resulting left-right diagrams are shown in Figure~$\ref{picture:left-right_diagrams_TypeA}$ where the boxes marked with a circle denote the additional shaded boxes.
\begin{figure}[h]
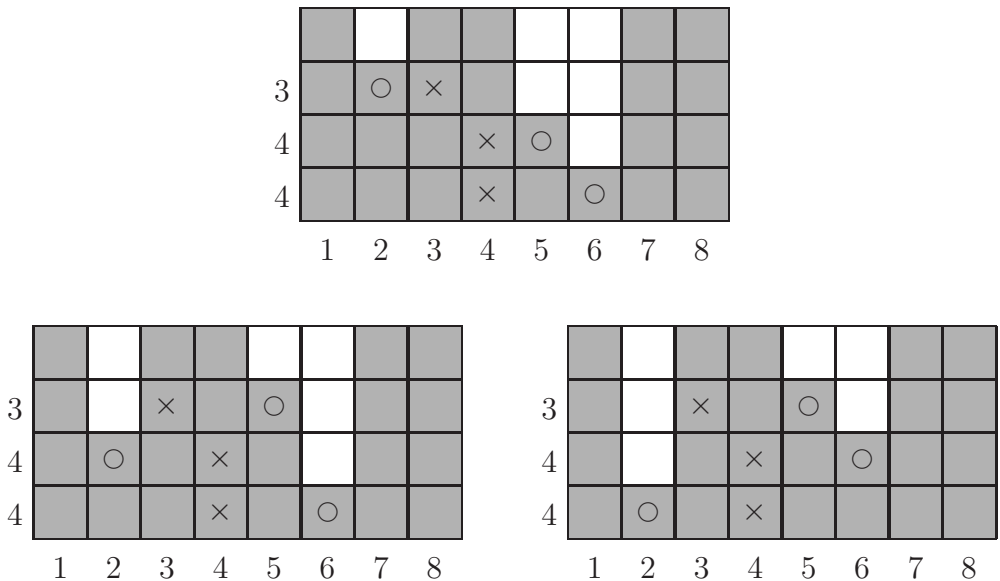

\begin{center}

\end{center}
\caption{The left-right diagrams associated with $(c_1,\ldots,c_8)=(1,0,2,3,0,0,1,1)$.}
\label{picture:left-right_diagrams_TypeA}
\end{figure}
\end{example}

For a left-right diagram $P \in \Delta_{c_1,\ldots, c_{n-1}}$, we define its weight as follows.
We first assign a rational number to each row of $P$ (except for the top row) by the following rule.
Consider the consecutive string of the shaded boxes which contains the marked box in the row under consideration. Then the set of the column numbers for these boxes must be of the form $\{a,a+1,\ldots,b\}$ for some $a,b\in [n-1]$ with $a \leq b$, and the column number $i$ of the marked box satisfies $a\le i\le b$.
To this row, we assign a rational number defined by 
\begin{align*}
&\frac{\b-i+1}{\b-\a+2} \ \ \  \textrm{if the additional shaded box is to the left of the marked box}, \\
&\frac{i-\a+1}{\b-\a+2} \ \ \  \textrm{if the additional shaded box is to the right of the marked box}.
\end{align*}
We can pictorially interpret this rational number as follows.
\begin{enumerate}
\item[$\bullet$] 
The denominator is computed as follows. 
Let $k$ be the number of boxes in the consecutive string of shaded boxes containing the marked box, counted before adding the additional shaded box. Then the denominator is $k+1$.
\item[$\bullet$] 
The numerator is computed as follows. 
Suppose the additional shaded box is to the right of the marked box. 
Then count the number of boxes in the maximal consecutive string contained the marked box which are to the left of, or equal to, the marked box.
If the additional shaded box is to the left of the marked box, then do the same except replace ``left'' with ``right''.
\end{enumerate}
We define the \emph{weight} $\wt(P)$ of a left-right diagram $P$ as the product of these rational numbers assigned to rows (except for the first row). 

\begin{example} \label{ex:diagram_weight_TypeA}
Continuing with Example~$\ref{ex:diagram_TypeA}$, the weights of left-right diagrams associated with $(c_1,\ldots,c_8)=(1,0,2,3,0,0,1,1)$ are computed as follows:
\begin{align*}
\wt(P_1)=\frac{2}{3}\cdot\frac{4}{5}\cdot\frac{4}{6}, \ \ \wt(P_2)=\frac{1}{3}\cdot\frac{2}{4}\cdot\frac{4}{6}, \ \ \wt(P_3)=\frac{1}{3}\cdot\frac{2}{4}\cdot\frac{5}{7}, 
\end{align*}
where $P_1$, $P_2$, and $P_3$ are shown in Figure~$\ref{picture:left-right_diagrams_weight_TypeA}$.

\begin{figure}[h]
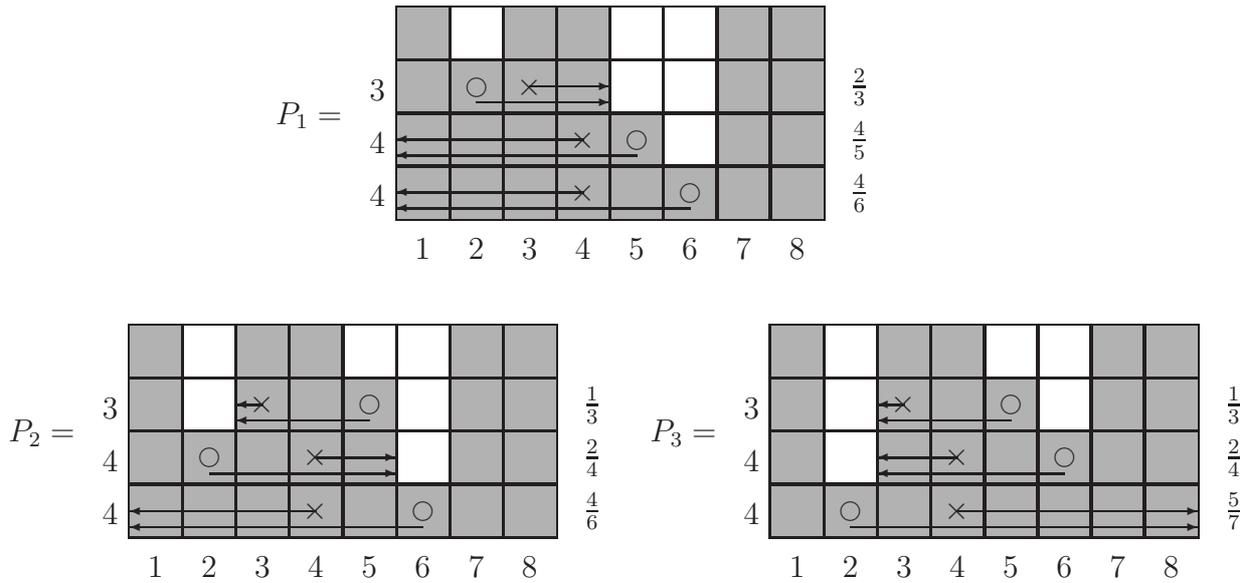

\begin{center}

\end{center}
\caption{The weights of left-right diagrams associated with $(c_1,\ldots,c_8)=(1,0,2,3,0,0,1,1)$.}
\label{picture:left-right_diagrams_weight_TypeA}
\end{figure}
\end{example}

We now summarize the computation for mixed Eulerian numbers in terms of left-right diagrams. 
The proof is similar to that of \cite[Theorem~5.6]{AHKZ}, but we give a proof for the reader.

\begin{theorem} \label{theorem:mainTypeA}
Let $c_1,\ldots, c_{n-1}$ be non-negative integers with $c_1 +\cdots+c_{n-1} = n-1$.
Let $\Delta_{c_1,\ldots, c_{n-1}}$ be the set of left-right diagrams associated with $(c_1,\ldots, c_{n-1})$.
Then, the mixed Eulerian number $A_{c_1,\ldots,c_{n-1}}$ is equal to
\begin{align*} 
A_{c_1,\ldots,c_{n-1}}=(n-1)! \sum_{P \in \Delta_{c_1,\ldots,c_{n-1}}} \wt(P).
\end{align*}
\end{theorem}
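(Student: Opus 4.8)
The plan is to reduce the statement to an iterated application of Lemma~\ref{lemm:TypeA} and to recognize the resulting tree of reductions as the set of left-right diagrams. By Proposition~\ref{prop:mixed Eulerian numberTypeA Pet} we have $A_{c_1,\ldots,c_{n-1}}=\int_{\Pet_n}\varpi_1^{c_1}\cdots\varpi_{n-1}^{c_{n-1}}$, so it suffices to prove the identity
\[
\varpi_1^{c_1}\varpi_2^{c_2}\cdots\varpi_{n-1}^{c_{n-1}}=\Big(\sum_{P\in\Delta_{c_1,\ldots,c_{n-1}}}\wt(P)\Big)\,\varpi_1\varpi_2\cdots\varpi_{n-1}
\]
in $H^*(\Pet_n)$; integrating this and invoking Proposition~\ref{prop:Top_Class_TypeA} then yields the theorem.

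First I would write $\varpi_1^{c_1}\cdots\varpi_{n-1}^{c_{n-1}}=\big(\prod_{j\in J}\varpi_j\big)\cdot\prod_{\ell=1}^{|I|}\varpi_{v_\ell}$, where $J=\{j:c_j\ge1\}$ is the support and $I=\{v_1\le\cdots\le v_{|I|}\}$ is the multiset of extra factors as in \eqref{eq:diagram_IJ}, and process the extra factors $\varpi_{v_1},\ldots,\varpi_{v_{|I|}}$ one at a time in this weakly increasing order. The heart of the argument is the inductive claim that, after multiplying in $\varpi_{v_1},\ldots,\varpi_{v_k}$ and fully reducing, the result equals a sum, indexed by the admissible shadings of the first $k+1$ rows produced by the game, of (the product of the rational numbers attached to rows $2,\ldots,k+1$) times the squarefree monomial $\prod_{j\in S}\varpi_j$, where $S\subseteq[n-1]$ is the set of shaded columns in row $k+1$.

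To carry out the inductive step I would use that each element $v_k\in I$ also lies in $J$, hence the column $v_k$ is shaded already in the first row and, since the game only adds boxes, remains shaded in row $k$. Thus multiplying the current squarefree monomial $\prod_{j\in S}\varpi_j$ by $\varpi_{v_k}$ doubles the index $v_k$ inside the unique maximal consecutive run $[a,b]\subseteq S$ containing $v_k$, and by maximality the neighbours $a-1$ and $b+1$ are not in $S$. Lemma~\ref{lemm:TypeA} with $i=v_k$ then rewrites this run as $\tfrac{b-i+1}{b-a+2}\,\varpi_{a-1}\cdots\varpi_b+\tfrac{i-a+1}{b-a+2}\,\varpi_a\cdots\varpi_{b+1}$, i.e.\ as the two ways of shading exactly one of the non-shaded neighbours $a-1$ (the additional box on the left) or $b+1$ (on the right). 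A direct comparison shows these two coefficients are precisely the rational numbers the weight rule assigns to row $k+1$: the denominator $b-a+2$ counts the shaded boxes from the additional box to the opposite terminal, and the numerator counts them from the marked box $v_k$ to that terminal. The boundary conventions $\varpi_0=\varpi_n=0$ correspond exactly to forbidding an additional box off the grid, so the reduction matches the game step by step.

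Finally, since each application of the lemma introduces exactly one new shaded column (the additional box $a-1$ or $b+1$, which by maximality was previously unshaded), each row adds one shaded box; after all $|I|$ steps row $|I|+1$ carries $|J|+|I|=n-1$ shaded boxes, so $S=[n-1]$ and the surviving squarefree monomial is $\varpi_1\cdots\varpi_{n-1}$ in every branch. This gives the displayed identity with coefficient $\sum_{P}\wt(P)$, and integration finishes the proof. The main obstacle I anticipate is the bookkeeping in the inductive step: verifying that the run containing the marked box is well defined and that extending it (possibly merging it with an adjacent run) never re-shades an already shaded box, so that the two branches of Lemma~\ref{lemm:TypeA} are in exact bijection with the two legal moves of the game. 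Once this is pinned down, the coefficient/weight matching is a routine comparison.
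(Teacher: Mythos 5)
Your proposal is correct and follows essentially the same route as the paper's proof: reduce to $\int_{\Pet_n}\varpi_1^{c_1}\cdots\varpi_{n-1}^{c_{n-1}}$ via Proposition~\ref{prop:mixed Eulerian numberTypeA Pet}, split the exponent vector into the support $J$ and the multiset $I$, absorb the elements of $I$ one at a time in weakly increasing order using Lemma~\ref{lemm:TypeA}, identify the two terms of each application with the two legal moves of the game (with $\varpi_0=\varpi_n=0$ handling the boundary) so that the accumulated coefficients are exactly the weights $\wt(P)$, and finish by integrating and applying Proposition~\ref{prop:Top_Class_TypeA}. Your write-up is in fact somewhat more careful than the paper's about the inductive bookkeeping (maximal runs, possible merging, and the count showing the terminal monomial is $\varpi_1\cdots\varpi_{n-1}$), but the argument is the same.
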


\begin{proof}
By Proposition~\ref{prop:mixed Eulerian numberTypeA Pet} the mixed Eulerian number $A_{c_1,\ldots,c_{n-1}}$ is equal to
\begin{align*} 
\int_{\Pet_n} \varpi_1^{c_1}\varpi_2^{c_2}\cdots\varpi_{n-1}^{c_{n-1}}  
= \int_{\Pet_n} \varpi_{i_1}\varpi_{i_2}\cdots\varpi_{i_{n-1}}  
= \int_{\Pet_n} \left(\prod_{i \in I} \varpi_i \right) \cdot \left( \prod_{j \in J} \varpi_j \right), 
\end{align*}
where $I$ and $J$ are defined in \eqref{eq:diagram_J} and \eqref{eq:diagram_I}.
We compute the product in the right hand side of this equality. For this purpose, take the decomposition $J= J_1 \sqcup \cdots \sqcup J_s$ into the maximal consecutive strings. 
Let $i$ be a smallest element of $I$. 
Then we have $i\in J_r$ for some $r$ ($1\le r\le s$). 
Since $J_r$ is consecutive, one can express it as $J_r=\{a,a+1,\ldots,b\}$ for some $a,b\in J$ with $a\leq i\leq b$. 
It then follows from Lemma~\ref{lemm:TypeA} that 
\begin{align*}
\varpi_i\cdot \left( \prod_{j\in J} \varpi_j \right)=
\frac{\b-i+1}{\b-\a+2} \prod_{j\in J \cup \{a-1\} } \varpi_j + \frac{i-\a+1}{\b-\a+2} \prod_{j\in J \cup \{b+1\} } \varpi_j,
\end{align*}
where we have no squares of $\varpi_j$'s in the right hand side since $J_r$ is a maximal consecutive string of $J$. 
Next, take a smallest element $i'$ of $I \setminus\{i\}$. Multiplying $\varpi_{i'}$ to the right hand side of the equality above, we can expand it by square-free monomials in $\varpi_1,\ldots,\varpi_{n-1}$ by Lemma~\ref{lemm:TypeA} again.
Repeating this procedure for each element of $I$ in weakly increasing order, we obtain
\begin{align*} 
\left(\prod_{i \in I} \varpi_i \right) \cdot \left( \prod_{j \in J} \varpi_j \right) = \left(\sum_{P \in \Delta_{c_1,\ldots,c_{n-1}}} \wt(P) \right) \varpi_1 \cdots \varpi_{n-1}
\end{align*}
by the definition of the left-right diagrams and their weights. 
Taking the integral over $\Pet_n$ for this equality, we obtain from Proposition~\ref{prop:Top_Class_TypeA} that 
\begin{align*} 
A_{c_1,\ldots,c_{n-1}}=(n-1)! \sum_{P \in \Delta_{c_1,\ldots,c_{n-1}}} \wt(P),
\end{align*}
as desired.
\end{proof}

\begin{example} \label{ex:Mixed_Eulerian_numbers_diagrams_TypeA}
Let $(c_1,\ldots,c_8)=(1,0,2,3,0,0,1,1)$ as in Example~$\ref{ex:calculus_TypeA}$. 
The weights of left-right diagrams associated with $(c_1,\ldots,c_8)$ are computed in Example~$\ref{ex:diagram_weight_TypeA}$.
Hence, the mixed Eulerian number $A_{c_1,\ldots,c_{8}}$ is equal to
\begin{align*}
8! \left( \frac{2}{3}\cdot\frac{4}{5}\cdot\frac{4}{6}+\frac{1}{3}\cdot\frac{2}{4}\cdot\frac{4}{6}+\frac{1}{3}\cdot\frac{2}{4}\cdot\frac{5}{7}\right) =14336+4480+4800=23616
\end{align*}
by Theorem~$\ref{theorem:mainTypeA}$.
Note that we have already seen the result in Example~$\ref{ex:calculus_TypeA}$.
\end{example}

\bigskip

\section{Mixed $\Phi$-Eulerian numbers} \label{sect:Any_Lie_types}

Postnikov introduced in \cite{Pos} the mixed Eulerian numbers for arbitrary Lie types which are called the mixed $\Phi$-Eulerian numbers.
In this section we extend results in previous sections to arbitrary Lie types. 

Recall from the introduction that $\Phi$ is a crystallographic root system of rank $n$ and $\Lambda$ is the associated integer weight lattice.
The Weyl group $W$ acts on the weight space $\Lambda_{\R} \coloneqq \Lambda \otimes \R$.
We denote by $( \ , \ )$ a $W$-invariant inner product on $\Lambda_{\R}$. 
For a point $\chi \in \Lambda_{\R}$, the \emph{weight polytope} $P_\Phi(\chi)$ is defined as
\begin{align*}
P_\Phi(\chi):=\Conv\{ w(\chi) \in \Lambda_{\R} \mid w \in W \}.
\end{align*}
Let $\Sigma \coloneqq \{\alpha_1, \ldots , \alpha_n\} \subset \Phi$ be a set of simple roots.
A notation $\Vol$ denotes the volume form on $\Lambda_{\R}$ normalized so that the volume of the parallelepiped generated by the simple roots $\alpha_1, \ldots , \alpha_n$ is equal to $1$.
Recall that a weight $\chi \in \Lambda_{\R}$ is called regular if $(\chi, \alpha) \neq 0$ for all $\alpha \in \Phi$. 
A weight $\chi$ is called dominant if $(\chi,\alpha_i) \geq 0$ for $i=1,\ldots, n$. 
Without loss of generality, we may assume that $\chi$ is dominant for the weight polytope $P_{\Phi}(\chi)$.

\begin{theorem}$($\cite[Theorem~4.2]{Pos}$)$
Let $t \in \Lambda_{\R}$ be a regular weight. 
The volume of the weight polytope $P_\Phi(\chi)$ is equal to 
\begin{align} \label{eq:Vol_weight_polytope}
\Vol P_\Phi(\chi) = \frac{1}{n!} \sum_{w \in W} \frac{\big(t,w(\chi)\big)^n}{\big(t,w(\alpha_1)\big) \cdots \big(t,w(\alpha_n)\big)}.
\end{align}
\end{theorem}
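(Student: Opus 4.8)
The plan is to interpret the right-hand side of \eqref{eq:Vol_weight_polytope} as the Lawrence--Varchenko (equivalently Brion) expansion of the volume of a simple polytope over its vertices, and to match its combinatorial data with the Weyl group. I would first prove the formula for \emph{dominant regular} $\chi$ and then extend it to all $\chi$ by a polynomial-identity argument: for fixed regular $t$, the right-hand side is a homogeneous polynomial of degree $n$ in $\chi$ (a sum of $n$-th powers of linear forms divided by nonzero constants), and $V_\Phi(\chi)=\Vol P_\Phi(\chi)$ is likewise a homogeneous polynomial of degree $n$ (as recalled in Section~\ref{sect:Intro}); two such polynomials agreeing on the open dominant cone must coincide identically.

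For dominant regular $\chi$ the geometry is transparent. The vertices of $P_\Phi(\chi)$ are exactly the $|W|$ distinct points of the Weyl orbit $\{w(\chi)\mid w\in W\}$, and the polytope is \emph{simple}, its normal fan being the Weyl (Coxeter) fan. The key structural input is that the $1$-skeleton is the (right) Cayley graph of $(W,\Sigma)$: the dominant vertex $\chi=v_e$ is joined by an edge to $s_i(\chi)=\chi-\langle\chi,\alpha_i^\vee\rangle\alpha_i$ for each simple root $\alpha_i$, so the $n$ edge vectors at $\chi$ pointing into the polytope are positive multiples of $-\alpha_1,\dots,-\alpha_n$. Transporting this local picture by the symmetry $w\in W$, the $n$ edge vectors at the vertex $v_w=w(\chi)$ are positive multiples of $-w(\alpha_1),\dots,-w(\alpha_n)$.

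Now I would apply the volume formula for a simple polytope: if $P\subset\Lambda_{\R}$ is simple and $\ell$ is a linear functional that is nonconstant along every edge, then
\begin{align*}
\Vol P=\frac{1}{n!}\sum_{v}\frac{\ell(v)^n\,\bigl|\det(u_1^v,\dots,u_n^v)\bigr|}{\prod_{i=1}^n\bigl(-\ell(u_i^v)\bigr)},
\end{align*}
where the sum runs over the vertices $v$, the $u_i^v$ are any edge vectors at $v$ pointing into $P$, and $\det$ is computed in the normalized volume form; the summand is unchanged under positive rescaling of each $u_i^v$. I take $\ell=(t,-)$, which is nonconstant along every edge precisely because $t$ is regular and every edge direction is a nonzero multiple of a root $w(\alpha_i)$, and I use $u_i^{v_w}=-w(\alpha_i)$. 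Then $\ell(v_w)=(t,w(\chi))$, $-\ell(u_i^{v_w})=(t,w(\alpha_i))$, and $\bigl|\det(-w(\alpha_1),\dots,-w(\alpha_n))\bigr|=\bigl|\det(\alpha_1,\dots,\alpha_n)\bigr|=1$, since $w$ is an orthogonal lattice automorphism and $\Vol$ is normalized so that the simple-root parallelepiped has volume $1$. Summing the vertex contributions over $w\in W$ yields exactly \eqref{eq:Vol_weight_polytope}.

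The main obstacle is the geometric input of the second step — identifying the vertices with the Weyl orbit and pinning the edge directions down to $-w(\alpha_i)$ — which is where the specific structure of weight polytopes enters and rests on the normal fan being the Weyl fan; once this is granted, the rest is the application of a general convex-geometry identity together with the $W$-invariance of $(\,,\,)$ and the volume normalization. I would finally remark that this computation is precisely an Atiyah--Bott--Berline--Vergne localization on the projective toric variety attached to $P_\Phi(\chi)$, whose $T$-fixed points are indexed by $W$ with tangent weights $w(\alpha_i)$; this parallels the localization computation in Lemma~\ref{lemma:Topology_divided_symmetrization_typeA} and explains the formal identity between the two expressions.
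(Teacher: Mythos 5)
The paper does not actually prove this statement: it is quoted verbatim from \cite[Theorem~4.2]{Pos}, with no proof given, so there is no internal argument to compare yours against. Judged on its own, your proof is correct and is the standard vertex-decomposition argument (essentially Postnikov's own route): for regular dominant $\chi$ the weight polytope is simple, its vertices are the $|W|$ points $w(\chi)$, the normal cone at $w(\chi)$ is $w$ applied to the closed dominant chamber, hence the inward edge directions at $w(\chi)$ are positive multiples of $-w(\alpha_1),\dots,-w(\alpha_n)$; feeding this into the Brion/Lawrence--Varchenko formula with $\ell=(t,-)$ (legitimate because regularity of $t$ makes $\ell$ nonconstant on every edge, all edge directions being roots) and using that each $w\in W$ preserves the normalized volume form gives \eqref{eq:Vol_weight_polytope} term by term. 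Your closing remark is also apt: interpreting the sum as Atiyah--Bott--Berline--Vergne localization on $X_\Phi$ is exactly how the paper exploits this theorem in the proof of Lemma~\ref{lemma:Topology_divided_symmetrization}.

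One step needs tightening: the extension "to all $\chi$". The identity genuinely fails for non-dominant $\chi$: already in type $A_1$, with $\chi=u\varpi_1$, the right-hand side equals $u$ for every $u$, whereas $\Vol P_\Phi(\chi)=|u|$; the volume function on all of $\Lambda_{\R}$ is $W$-invariant and not a polynomial, while the right-hand side is a polynomial in $\chi$. So writing $V_\Phi(\chi)=\Vol P_\Phi(\chi)$ as an identity valid on all of $\Lambda_{\R}$ is an overreach. What your polynomial-identity argument actually establishes --- and what is both true and what the paper needs --- is that the right-hand side coincides with the volume polynomial $V_\Phi$, equivalently that \eqref{eq:Vol_weight_polytope} holds for all $\chi$ in the closed dominant cone (this is the implicit hypothesis in \cite{Pos}, where the coordinates $u_i$ are nonnegative). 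For the passage from the open dominant cone to its boundary, it is cleaner to invoke continuity of the volume in $\chi$ rather than the polynomiality of $V_\Phi$: in \cite{Pos} that polynomiality is itself deduced from this very formula, so quoting it risks circularity, whereas continuity gives the boundary case and the polynomiality of $V_\Phi$ simultaneously.
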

Take an orthonormal basis $e_1,\ldots,e_n$ for $\Lambda_{\R}$. 
When we write $t \in \Lambda_{\R}$ as $t=t_1e_1+\cdots+t_ne_n$ for some $t_1,\ldots,t_n \in \R$, the right hand side in \eqref{eq:Vol_weight_polytope} can be expressed as
\begin{align*}
\frac{1}{n!} \sum_{w \in W} \frac{\Big( \big(e_1,w(\chi)\big)t_1+\cdots+\big(e_n,w(\chi)\big)t_n \Big)^n}{\Big( \big(e_1,w(\alpha_1)\big)t_1+\cdots+\big(e_n,w(\alpha_1)\big)t_n \Big) \cdots \Big( \big(e_1,w(\alpha_n)\big)t_1+\cdots+\big(e_n,w(\alpha_n)\big)t_n \Big)}.
\end{align*}
As in the case of type $A$, we may regard $t_1,\ldots,t_n$ as variables (see \eqref{eq:Vol_Div_Sym}).
With this in mind, we see a topological interpretation for the volume of the weight polytope.

Let $G$ be a simply connected semisimple algebraic group over $\C$ and $B$ a fixed Borel subgroup of $G$.
Take a maximal torus $T$ in $B$.
We denote the Lie algebras of $G$, $B$, and $T$ by $\mathfrak{g}$, $\mathfrak{b}$, and $\mathfrak{t}$, respectively.
The root space associated with a root $\alpha$ is denoted by $\mathfrak{g}_{\alpha}$.

Let $S \in \mathfrak{g}$ be a regular semisimple element sitting in $\mathfrak{t}$. 
Then, the subvariety 
\begin{align} \label{eq:Weight_in_Flag}
X_{\Phi} \coloneqq \left\{gB \in G/B \mid \mbox{Ad}(g^{-1})(S) \in \mathfrak{b} \oplus \bigoplus_{i=1}^{n} \mathfrak{g}_{-\alpha_i} \right\}
\end{align}
admits an $T$-action which is induced from the natural $T$-action on the flag variety $G/B$ (\cite[Proposition~2]{dMPS}).
It is known that $\X_{\Phi}$ is a toric variety which corresponds to the normal fan of the weight polytope $P_\Phi(\chi)$ associated to a regular weight $\chi$ (\cite[Theorem~11]{dMPS}).
The toric variety $\X_{\Phi}$ is smooth and its complex dimension is $n=\rank(\Phi)$ (\cite[Theorem~6 and Corollary~9]{dMPS}).

To each weight $\chi$ we can assign a line bundle\footnote{We take the $B$-action on the product $G \times \C_\chi$ such that $[g, z]=[gb,b^{-1}z]$ for $g \in G, z \in \C_\chi, b \in B$ in the quotient.} $L_{\chi}=G \times_B \C_{\chi}$ over the flag variety $G/B$ where $\C_{\chi}$ is the one-dimensional $B$-representation via $\chi:T \to \C^{\times}$ by composing with the canonical projection $B \twoheadrightarrow T$.
We also denote the restriction $L_{\chi}^*|_{X_{\Phi}}$ by the same symbol $L_{\chi}^*$ when there are no confusion.
By abuse of notation, for the fundamental weight $\varpi_i \in \Lambda \ (1 \leq i \leq n)$, we denote by 
\begin{align} 
&\varpi_i^T \coloneqq c_1^T(L_{\varpi_i}^*)=-c_1^T(L_{\varpi_i}), \label{eq:varpiTanytype} \\
&\varpi_i \coloneqq c_1(L_{\varpi_i}^*)=-c_1(L_{\varpi_i}), \label{eq:varpianytype}
\end{align}
the $T$-equivariant (or ordinary) first Chern class of the line bundle $L_{\varpi_i}^*$. 

\begin{remark} \label{rem:notation_varpi}
As is well-known, assigning the first Chern class $c_1(L_\chi^*)$ to $\chi$ induces an isomorphism $\Lambda_{\R} \cong H^2(G/B)$. 
For this reason, we use the same notation $\varpi_i$ for $c_1(L_{\varpi_i}^*) \in H^2(G/B)$ and $c_1(L_{\varpi_i}^*|_{X_\Phi}) \in H^2(X_{\Phi})$ in \eqref{eq:varpianytype}.
\end{remark}

It is known that the correspondence $\chi \mapsto c_1^T(\C_{\chi})$ gives an isomorphism from $\Lambda$ to $H^2_T(\pt;\Z)$.
Since $H^*_T(\pt;\Z)$ is a polynomial ring over $\Z$ in $n$ variables of degree $2$, this isomorphism extends to the ring isomorphism $\Sym \Lambda_\R \cong H^*_T(\pt)$ where $\Sym \Lambda_\R$ denotes the symmetric algebra of $\Lambda_\R$. 
Fix an orthonormal basis $e_1,\ldots,e_n$ for $\Lambda_\R$.
Let $t_i \in H^2_T(\pt) \ (1 \leq i \leq n)$ be the image of $-e_i$ under the isomorphism $\Sym \Lambda_\R \cong H^*_T(\pt)$ above.
In this setting we have $H^*_T(\pt) = \R[t_1, \ldots , t_n]$.
It is known that the $T$-fixed point set $X_{\Phi}^T$ consists of $(G/B)^T$ which is identified with the Weyl group $W$ (\cite[Proposition~3]{dMPS}).
Since the restriction map $\iota: H^*_T(X_\Phi) \to H^*_T((X_\Phi)^T)=\bigoplus_{w \in W} \R[t_1,\ldots,t_n]$ is injective, the equivariant cohomology $H^*_T(X_\Phi)$ can be regarded as a subset of $\bigoplus_{w \in W} \R[t_1,\ldots,t_n]$.
Then, one can easily see that the $w$-th component of the equivariant first Chern class $c_1^T(L_\chi^*) \in H^2_T(X_\Phi)$ is given by 
\begin{align*}
c_1^T(L_\chi^*)|_w=\big(e_1,w(\chi)\big)t_1+\cdots+\big(e_n,w(\chi)\big)t_n 
\end{align*}
for $w \in W$ (e.g. \cite[Lemma~5.2]{AHMMS}).
The following lemma can be proved by using the Atiyah--Bott--Berline--Vergne formula  (\cite{AtBo, BeVe}), as in the proof of Lemma~\ref{lemma:Topology_divided_symmetrization_typeA}.

\begin{lemma} \label{lemma:Topology_divided_symmetrization}
The volume of the weight polytope $P_\Phi(\chi)$ (for a dominant weight $\chi$) is 
\begin{align*} 
\Vol P_\Phi(\chi) = \frac{1}{n!} \int_{X_\Phi} c_1(L_\chi^*)^n.
\end{align*}
\end{lemma}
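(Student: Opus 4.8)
The plan is to follow the proof of Lemma~\ref{lemma:Topology_divided_symmetrization_typeA} line by line, replacing the permutohedral variety $\Perm_n$ by the toric variety $X_\Phi$ and the symmetric group $S_n$ by the Weyl group $W$. Let $\pr\colon X_\Phi \to \{\pt\}$ be the collapsing map and $\pr_!^T\colon H^*_T(X_\Phi) \to H^{*-2n}_T(\pt)$ the induced equivariant Gysin map. Since $X_\Phi$ is a smooth compact toric variety whose $T$-fixed points are indexed by $W$, the Atiyah--Bott--Berline--Vergne formula (\cite{AtBo,BeVe}) yields, for $\alpha \in H^*_T(X_\Phi)$,
\begin{align*}
\pr_!^T(\alpha) = \sum_{w \in W} \frac{\alpha|_w}{e_w},
\end{align*}
where $e_w$ is the $T$-equivariant Euler class of the tangent space $T_{x_w}X_\Phi$ at the fixed point $x_w$. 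I would apply this to $\alpha = c_1^T(L_\chi^*)^n$. Its restriction to the fixed point $w$ is already recorded in the excerpt, namely $c_1^T(L_\chi^*)|_w = \big(e_1,w(\chi)\big)t_1+\cdots+\big(e_n,w(\chi)\big)t_n = (t, w(\chi))$, so that $\alpha|_w = (t, w(\chi))^n$.

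The crux is the computation of $e_w$. First I would use that the fan of $X_\Phi$ consists of the Weyl chambers, so $x_w$ corresponds to the chamber $w\cdot C_0$ (with $C_0$ the dominant chamber) and the edges of the weight polytope $P_\Phi(\chi)$ leaving the vertex $w(\chi)$ point toward the adjacent vertices $w s_i(\chi)$. Computing $w s_i(\chi) - w(\chi) = -\langle \chi, \alpha_i^\vee\rangle\, w(\alpha_i)$ identifies the $T$-weights on the tangent space as $-w(\alpha_1), \ldots, -w(\alpha_n)$, that is, $T_{x_w}X_\Phi \cong \bigoplus_{i=1}^n \C_{-w(\alpha_i)}$ as $T$-representations (the type $A$ case of \eqref{eq:Euler_class_Perm} is exactly this with $-w(\alpha_i) = e_{w(i+1)}-e_{w(i)}$). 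Tracking the sign convention $c_1^T(\C_\beta^*) = (t,\beta)$ fixed in the excerpt, this gives
\begin{align*}
e_w = \prod_{i=1}^{n} c_1^T(\C_{-w(\alpha_i)}) = \prod_{i=1}^{n} (t, w(\alpha_i)),
\end{align*}
which is precisely the denominator in Postnikov's formula \eqref{eq:Vol_weight_polytope}. Hence
\begin{align*}
\pr_!^T\big(c_1^T(L_\chi^*)^n\big) = \sum_{w \in W} \frac{(t, w(\chi))^n}{\prod_{i=1}^n (t, w(\alpha_i))} = n!\,\Vol P_\Phi(\chi).
\end{align*}

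Finally, the equivariant and ordinary Gysin maps commute with the forgetful maps exactly as in \eqref{eq:CD_1}, so $\pr_!^T\big(c_1^T(L_\chi^*)^n\big) = \pr_!\big(c_1(L_\chi^*)^n\big) = \int_{X_\Phi} c_1(L_\chi^*)^n$. Comparing the two computations gives $\int_{X_\Phi} c_1(L_\chi^*)^n = n!\,\Vol P_\Phi(\chi)$, which is the assertion. The hard part will be the verification of the Euler class $e_w$: one must read off the tangent representation from the toric data (the Weyl fan, with its fixed points indexed by $W$) and keep track of the duality and sign conventions so that $e_w$ reproduces Postnikov's denominator $\prod_i(t,w(\alpha_i))$ with no spurious scalar or sign. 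Once this identification is secured, everything else is a routine transcription of the type $A$ argument.
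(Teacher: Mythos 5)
Your proposal is correct and follows essentially the same route as the paper, whose proof of this lemma is exactly the instruction to repeat the Atiyah--Bott--Berline--Vergne argument of Lemma~\ref{lemma:Topology_divided_symmetrization_typeA} with $S_n$ replaced by $W$: the restriction $c_1^T(L_\chi^*)|_w=(t,w(\chi))$, the Euler class $e_w=\prod_{i=1}^n\big(t,w(\alpha_i)\big)$ reproducing the denominator of \eqref{eq:Vol_weight_polytope}, and the commutation of the equivariant and ordinary Gysin maps as in \eqref{eq:CD_1}. Your verification of the tangent weights $-w(\alpha_1),\ldots,-w(\alpha_n)$ (the general-type analogue of the decomposition cited from de Mari--Procesi--Shayman in type $A$) is the right way to fill in the one detail the paper leaves implicit, and your sign bookkeeping is consistent with \eqref{eq:Euler_class_Perm}.
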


We now explain the mixed $\Phi$-Eulerian numbers.
Let $\varpi_1,\ldots,\varpi_n$ be the fundamental weights.
We take $\chi=u_1\varpi_1+\cdots+u_n\varpi_n$ and consider the associated weight polytope $P_\Phi(\chi)$.
Its volume is a homogeneous polynomial $V_\Phi$ of degree $n$ in the variables $u_1,\ldots,u_n$:
\begin{align*}
V_\Phi(u_1,\ldots,u_n) \coloneqq \Vol P_\Phi(u_1\varpi_1+\cdots+u_n\varpi_n).
\end{align*}

The \emph{mixed $\Phi$-Eulerian numbers} $A^{\Phi}_{c_1,\ldots,c_n}$, for $c_1, \ldots , c_n \geq 0$ with $c_1 +\cdots+c_n =n$, are defined in \cite[Definition~18.4]{Pos} as the coefficients of the polynomial 
\begin{align*} 
V_\Phi(u_1,\ldots,u_n) = \sum_{c_1,\ldots,c_n} A^{\Phi}_{c_1,\ldots,c_n} \frac{u_1^{c_1}}{c_1!} \cdots \frac{u_{n}^{c_{n}}}{c_{n}!}.
\end{align*}
The mixed $\Phi$-Eulerian number $A^{\Phi}_{c_1,\ldots,c_n}$ means the mixed volume of $c_1$ copies of $P_\Phi(\varpi_1)$, $c_2$ copies of $P_\Phi(\varpi_2)$, $\cdots$, and $c_n$ copies of $P_\Phi(\varpi_n)$, multiplied by $n!$.
Here, the weight polytopes $P_\Phi(\varpi_1), P_\Phi(\varpi_2), \ldots, P_\Phi(\varpi_n)$ are called the \emph{$\Phi$-hypersimplices}.

It follows from Lemma~\ref{lemma:Topology_divided_symmetrization} that
\begin{align*} 
\Vol P_\Phi(u_1\varpi_1+\cdots+u_n\varpi_n) =& \frac{1}{n!} \int_{X_{\Phi}} (u_1\varpi_1+\cdots+u_n\varpi_n)^n \\
=& \sum_{c_1,\ldots,c_n} \left( \int_{X_{\Phi}} \varpi_1^{c_1}\varpi_2^{c_2}\cdots\varpi_{n}^{c_n} \right) \frac{u_1^{c_1}}{c_1!} \cdots \frac{u_{n}^{c_{n}}}{c_{n}!},
\end{align*}
where the sum is over non-negative integers $c_1, \ldots , c_n$ with $c_1 +\cdots+c_n = n$.
As a consequence, we obtain the following result.

\begin{proposition} \label{proposition:mixed Eulerian number}
Let $c_1,\ldots, c_n$ be non-negative integers with $c_1 +\cdots+c_n = n$.
Then, the mixed $\Phi$-Eulerian number $A^{\Phi}_{c_1,\ldots,c_n}$ is equal to
\begin{align*} 
A^{\Phi}_{c_1,\ldots,c_n} = \int_{X_{\Phi}} \varpi_1^{c_1}\varpi_2^{c_2}\cdots\varpi_{n}^{c_n},
\end{align*}
where $\varpi_i$ is defined in \eqref{eq:varpianytype}.
\end{proposition}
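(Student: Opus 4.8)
The plan is to obtain the formula by reading it off from the topological expression for the volume in Lemma~\ref{lemma:Topology_divided_symmetrization} and then matching coefficients against the defining expansion of $V_\Phi$. Setting $\chi = u_1\varpi_1 + \cdots + u_n\varpi_n$, I would first apply Lemma~\ref{lemma:Topology_divided_symmetrization} to write
\[
V_\Phi(u_1,\ldots,u_n) = \Vol P_\Phi(\chi) = \frac{1}{n!}\int_{X_\Phi} c_1(L_\chi^*)^n,
\]
so that the whole statement reduces to expanding the class $c_1(L_\chi^*) \in H^2(X_\Phi)$ as the linear combination $u_1\varpi_1 + \cdots + u_n\varpi_n$ of the classes $\varpi_i = c_1(L_{\varpi_i}^*)$ from \eqref{eq:varpianytype}.

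The one step that needs care is this linearity. For integral weights the assignment $\chi \mapsto L_\chi$ is multiplicative, $L_{\chi+\chi'}\cong L_\chi\otimes L_{\chi'}$, so first Chern classes add and $c_1(L_\chi^*) = \sum_i u_i\,c_1(L_{\varpi_i}^*) = \sum_i u_i\varpi_i$ whenever $u_1,\ldots,u_n\in\Z$; this is exactly the restriction to $X_\Phi$ of the isomorphism $\Lambda_\R\cong H^2(G/B)$ recorded in Remark~\ref{rem:notation_varpi}. Since both sides of the claimed identity are polynomials in $u_1,\ldots,u_n$, it is enough to check equality at all integer points, and there the above identification applies directly; this lets me pass from the real parameters $u_i$, for which $L_\chi$ has no literal meaning, to genuine line bundles.

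It then remains to expand and compare. Substituting the linear expansion and applying the multinomial theorem gives
\[
V_\Phi(u_1,\ldots,u_n) = \frac{1}{n!}\int_{X_\Phi}\Big(\sum_{i=1}^n u_i\varpi_i\Big)^n = \sum_{c_1+\cdots+c_n=n}\Big(\int_{X_\Phi}\varpi_1^{c_1}\cdots\varpi_n^{c_n}\Big)\frac{u_1^{c_1}}{c_1!}\cdots\frac{u_n^{c_n}}{c_n!},
\]
the factor $n!/(c_1!\cdots c_n!)$ from the multinomial coefficient cancelling the $1/n!$ and producing the factorials in the denominators. Comparing with the defining expansion $V_\Phi=\sum_c A^\Phi_{c_1,\ldots,c_n}\frac{u_1^{c_1}}{c_1!}\cdots\frac{u_n^{c_n}}{c_n!}$ and using linear independence of the monomials then yields $A^\Phi_{c_1,\ldots,c_n}=\int_{X_\Phi}\varpi_1^{c_1}\cdots\varpi_n^{c_n}$.

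I expect the only real obstacle to be the bookkeeping in the middle paragraph: keeping straight that the $u_i$ are formal or real parameters while the line bundles are defined only for integral weights, so that the key identity $c_1(L_\chi^*)=\sum_i u_i\varpi_i$ must be established over $\Z$ and then promoted to a polynomial identity. Once Lemma~\ref{lemma:Topology_divided_symmetrization} is in hand, the rest is a direct multinomial expansion.
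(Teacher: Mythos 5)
Your proof is correct and follows essentially the same route as the paper: apply Lemma~\ref{lemma:Topology_divided_symmetrization} with $\chi = u_1\varpi_1+\cdots+u_n\varpi_n$, expand by the multinomial theorem, and compare coefficients with the defining expansion of $V_\Phi$. Your extra care in justifying the linearity $c_1(L_\chi^*)=\sum_i u_i\varpi_i$ (first over $\Z$ via $L_{\chi+\chi'}\cong L_\chi\otimes L_{\chi'}$, then promoted to a polynomial identity in the $u_i$) makes explicit a step the paper leaves implicit through Remark~\ref{rem:notation_varpi}, but it is the same argument.
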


By Proposition~\ref{proposition:mixed Eulerian number} we have 
\begin{align*} 
A^{\Phi}_{1,\ldots,1} = \int_{X_{\Phi}} \varpi_1\varpi_2\cdots\varpi_{n}.
\end{align*}
However, it is known that the left hand side is equal to $\frac{|W|}{\det (C_{\Phi})}$ by \cite[Proposition~2.7.5]{Cro} where $C_{\Phi}$ denotes the associated Cartan matrix.
Note that this number can be explicitly described below.
\begin{table}[htb]
  \begin{tabular}{|c|c|} \hline
    $\Phi$ & $\frac{|W|}{\det (C_{\Phi})}$ \\ \hline \hline
    $A_n$ & $n!$ \\ \hline
    $B_n,C_n$ & $2^{n-1} \cdot n!$ \\ \hline
    $D_n$ & $2^{n-3} \cdot n!$ \\ \hline
    $E_6$ & $2^{7} \cdot 3^{3} \cdot 5$ \\ \hline
    $E_7$ & $2^{9} \cdot 3^{4} \cdot 5 \cdot 7$ \\ \hline
    $E_8$ & $2^{14} \cdot 3^{5} \cdot 5^{2} \cdot 7$ \\ \hline
    $F_4$ & $2^{7} \cdot 3^{2}$ \\ \hline
    $G_2$ &  $2^{2} \cdot 3$ \\ \hline
  \end{tabular}
  \caption{A list of values of $\frac{|W|}{\det (C_{\Phi})}$.}
\label{tab:A list of values mPhi}
\end{table}

Hence, we have the following proposition. 

\begin{proposition} \label{prop:Top_Class}
We have
\begin{align*} 
\int_{X_{\Phi}} \varpi_1\varpi_2 \cdots \varpi_n =\frac{|W|}{\det (C_{\Phi})}.
\end{align*}
\end{proposition}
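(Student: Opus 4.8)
The plan is to obtain the identity as the $c_1=\cdots=c_n=1$ specialization of Proposition~\ref{proposition:mixed Eulerian number}, combined with the known value of the corresponding mixed $\Phi$-Eulerian number. First I would set every exponent equal to $1$ in Proposition~\ref{proposition:mixed Eulerian number}, which gives
\begin{align*}
A^{\Phi}_{1,\ldots,1} = \int_{X_{\Phi}} \varpi_1\varpi_2\cdots\varpi_n .
\end{align*}
Thus the assertion to be proved is equivalent to the purely combinatorial statement $A^{\Phi}_{1,\ldots,1} = |W|/\det(C_{\Phi})$.

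Second, I would invoke \cite[Proposition~2.7.5]{Cro}, in which the normalized mixed volume of the full family of $\Phi$-hypersimplices $P_\Phi(\varpi_1),\ldots,P_\Phi(\varpi_n)$ (that is, precisely $A^{\Phi}_{1,\ldots,1}$ in the present normalization) is shown to equal $|W|/\det(C_{\Phi})$; combining this with the display above yields the claim. As a sanity check I would compare with Proposition~\ref{prop:Top_Class_TypeA}: for $\Phi$ of type $A_{n-1}$ one has $|W|=n!$ and $\det(C_{\Phi})=n$, so the right-hand side equals $(n-1)!$, in agreement with $\int_{\Pet_n}\varpi_1\cdots\varpi_{n-1}=(n-1)!$. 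Along this route there is essentially no obstacle; the single point requiring attention is matching the external computation in \cite{Cro} to our $A^{\Phi}_{1,\ldots,1}$, which is immediate because both are defined through Postnikov's volume with the same normalization of $\Vol$ (the one used to derive Proposition~\ref{proposition:mixed Eulerian number} via Lemma~\ref{lemma:Topology_divided_symmetrization}).

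If instead a self-contained derivation were wanted, I would compute the intersection number directly by Atiyah--Bott--Berline--Vergne localization on the toric variety $X_\Phi$, as in the proof of Lemma~\ref{lemma:Topology_divided_symmetrization_typeA}. The $w$-th restriction of $\varpi_i^T$ is the linear form attached to $w(\varpi_i)$, while the equivariant Euler class $e_w$ at the fixed point $w$ is pinned down by comparing the localization formula $\int_{X_\Phi}c_1^T(L_\chi^*)^n=\sum_{w\in W}(t,w(\chi))^n/e_w$ with Postnikov's formula \eqref{eq:Vol_weight_polytope}, valid for all $\chi$; this forces $e_w=\prod_{i=1}^n\big(t,w(\alpha_i)\big)$. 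Localizing the product $\varpi_1^T\cdots\varpi_n^T$ then yields the $t$-independent constant
\begin{align*}
\int_{X_{\Phi}}\varpi_1\cdots\varpi_n = \sum_{w\in W}\frac{\prod_{i=1}^n\big(t,w(\varpi_i)\big)}{\prod_{i=1}^n\big(t,w(\alpha_i)\big)} .
\end{align*}
Here the main obstacle would be evaluating this Weyl-group sum in closed form: substituting $\alpha_i=\sum_j (C_{\Phi})_{ij}\varpi_j$ and replacing $t$ by $w^{-1}t$ turns it into a sum of a single degree-zero rational function over the regular orbit $W\cdot t$, and proving directly that it equals $|W|/\det(C_{\Phi})$ (say by an iterated-residue argument) is precisely the input one otherwise takes from \cite{Cro}. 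For this reason I would present the first route as the proof.
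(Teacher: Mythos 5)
Your first route is exactly the paper's own proof: the paper likewise specializes Proposition~\ref{proposition:mixed Eulerian number} to $c_1=\cdots=c_n=1$ to get $A^{\Phi}_{1,\ldots,1}=\int_{X_\Phi}\varpi_1\cdots\varpi_n$ and then cites \cite[Proposition~2.7.5]{Cro} for $A^{\Phi}_{1,\ldots,1}=|W|/\det(C_{\Phi})$. The supplementary localization sketch is fine as an aside but is not needed, and your main argument is correct as it stands.
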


\begin{remark} \label{remark:Klyachko}
Proposition~\ref{prop:Top_Class} is originally the result of Klyachko (\cite[Theorem~3]{Kly}). 
Klyachko's proof was written in \cite{Kly2} in Russian and it is probably a different argument from ours.
That is, one may prove Proposition~\ref{prop:Top_Class} without going through the mixed $\Phi$-Eulerian numbers. 
We will give an alternative proof (without going through the mixed Eulerian numbers) by using equivariant cohomology of $X_{\Phi}$ in Appendix~\ref{sect: Integration}.
\end{remark}

In the next section we will derive a simple computaion for the mixed Eulerian numbers $A^{\Phi}_{c_1,\ldots,c_n}$ from Peterson Schubert calculus.

\bigskip

\section{Computation for mixed $\Phi$-Eulerian numbers and Peterson Schubert calculus} \label{sect: Mixed Eulerian numbers and Peterson Schubert calculus}

In this section we explain a connection between mixed $\Phi$-Eulerian numbers and Peterson Schubert calculus in all Lie types.
As a consequence, we derive a simple computation for the mixed $\Phi$-Eulerian numbers by using results of Drellich in \cite{Dre2}.
In what follows, we may assume that $\Phi$ is an irreducible root system.

Fix a simple system $\Sigma=\{\alpha_1,\ldots,\alpha_n \}$ of $\Phi$.
Recall that the set of vertices of the Dynkin diagram of $\Phi$ is in one-to-one correspondence with $\Sigma$. 
Here and below, we assume to be fixed an ordering of the simple roots as given in \cite[p.58]{Hum1}. (See Figure~\ref{pic: Dynkin diagrams}.)

\begin{figure}[h]
\setlength{\unitlength}{1mm}
\begin{center} 
  \begin{picture}(50, 15)(0,0)
  \put(-25,9){Type $A_n$:}
  \put(0,10){\circle{2}}
  \put(10,10){\circle{2}}
  \put(20,10){\circle{2}}
  \put(50,10){\circle{2}}
  \put(60,10){\circle{2}}
  \put(1,10){\line(1,0){8}}
  \put(11,10){\line(1,0){8}}
  \put(21,10){\line(1,0){8}}
  \put(33,8.8){$\cdots$}
  \put(41,10){\line(1,0){8}}
  \put(51,10){\line(1,0){8}}
  \put(-2,3){$\alpha_1$}
  \put(8,3){$\alpha_2$}
  \put(18,3){$\alpha_3$}
  \put(48,3){$\alpha_{n-1}$}
  \put(58,3){$\alpha_n$}
  \end{picture}
\end{center}  

\setlength{\unitlength}{1mm}
\begin{center}
  \begin{picture}(50, 15)(0,0)
  \put(-25,9){Type $B_n$:}
  \put(0,10){\circle{2}}
  \put(10,10){\circle{2}}
  \put(20,10){\circle{2}}
  \put(50,10){\circle{2}}
  \put(60,10){\circle{2}}
  \put(1,10){\line(1,0){8}}
  \put(11,10){\line(1,0){8}}
  \put(21,10){\line(1,0){8}}
  \put(33,8.8){$\cdots$}
  \put(41,10){\line(1,0){8}}
  \put(51,10.5){\line(1,0){8}}
  \put(51,9.5){\line(1,0){8}}
  \put(-2,3){$\alpha_1$}
  \put(8,3){$\alpha_2$}
  \put(18,3){$\alpha_3$}
  \put(48,3){$\alpha_{n-1}$}
  \put(58,3){$\alpha_n$}
  \qbezier(54.5,11.5)(57.5,10)(54.5,8.5)
  \end{picture}
\end{center}  

\setlength{\unitlength}{1mm}
\begin{center}
  \begin{picture}(50, 15)(0,0)
  \put(-25,9){Type $C_n$:}
  \put(0,10){\circle{2}}
  \put(10,10){\circle{2}}
  \put(20,10){\circle{2}}
  \put(50,10){\circle{2}}
  \put(60,10){\circle{2}}
  \put(1,10){\line(1,0){8}}
  \put(11,10){\line(1,0){8}}
  \put(21,10){\line(1,0){8}}
  \put(33,8.8){$\cdots$}
  \put(41,10){\line(1,0){8}}
  \put(51,10.5){\line(1,0){8}}
  \put(51,9.5){\line(1,0){8}}
  \put(-2,3){$\alpha_1$}
  \put(8,3){$\alpha_2$}
  \put(18,3){$\alpha_3$}
  \put(48,3){$\alpha_{n-1}$}
  \put(58,3){$\alpha_n$}
  \qbezier(55.5,11.5)(52.5,10)(55.5,8.5)
  \end{picture}
\end{center}  

\setlength{\unitlength}{1mm}
\begin{center}
  \begin{picture}(50, 20)(0,0)
  \put(-25,9){Type $D_n$:}
  \put(0,10){\circle{2}}
  \put(10,10){\circle{2}}
  \put(20,10){\circle{2}}
  \put(50,10){\circle{2}}
  \put(60,4){\circle{2}}
  \put(60,16){\circle{2}}
  \put(1,10){\line(1,0){8}}
  \put(11,10){\line(1,0){8}}
  \put(21,10){\line(1,0){8}}
  \put(33,8.8){$\cdots$}
  \put(41,10){\line(1,0){8}}
  \put(51,10.5){\line(3,2){8}}
  \put(51,9.5){\line(3,-2){8}}
  \put(-2,3){$\alpha_1$}
  \put(8,3){$\alpha_2$}
  \put(18,3){$\alpha_3$}
  \put(48,3){$\alpha_{n-2}$}
  \put(63,15){$\alpha_{n-1}$}
  \put(63,3){$\alpha_n$}
  \end{picture}
\end{center}  

\setlength{\unitlength}{1mm}
\begin{center}
  \begin{picture}(50, 25)(0,0)
  \put(-25,9){Type $E_6$:}
  \put(0,10){\circle{2}}
  \put(10,10){\circle{2}}
  \put(20,10){\circle{2}}
  \put(30,10){\circle{2}}
  \put(40,10){\circle{2}}
  \put(20,20){\circle{2}}
  \put(1,10){\line(1,0){8}}
  \put(11,10){\line(1,0){8}}
  \put(21,10){\line(1,0){8}}
  \put(31,10){\line(1,0){8}}
  \put(20,11){\line(0,1){8}}
  \put(-2,3){$\alpha_1$}
  \put(8,3){$\alpha_3$}
  \put(18,3){$\alpha_4$}
  \put(28,3){$\alpha_5$}
  \put(38,3){$\alpha_6$}
  \put(22,19){$\alpha_2$}
  \end{picture}
\end{center}  

\setlength{\unitlength}{1mm}
\begin{center}
  \begin{picture}(50, 25)(0,0)
  \put(-25,9){Type $E_7$:}
  \put(0,10){\circle{2}}
  \put(10,10){\circle{2}}
  \put(20,10){\circle{2}}
  \put(30,10){\circle{2}}
  \put(40,10){\circle{2}}
  \put(50,10){\circle{2}}
  \put(20,20){\circle{2}}
  \put(1,10){\line(1,0){8}}
  \put(11,10){\line(1,0){8}}
  \put(21,10){\line(1,0){8}}
  \put(31,10){\line(1,0){8}}
  \put(41,10){\line(1,0){8}}
  \put(20,11){\line(0,1){8}}
  \put(-2,3){$\alpha_1$}
  \put(8,3){$\alpha_3$}
  \put(18,3){$\alpha_4$}
  \put(28,3){$\alpha_5$}
  \put(38,3){$\alpha_6$}
  \put(48,3){$\alpha_7$}
  \put(22,19){$\alpha_2$}
  \end{picture}
\end{center}  

\setlength{\unitlength}{1mm}
\begin{center}
  \begin{picture}(50, 25)(0,0)
  \put(-25,9){Type $E_8$:}
  \put(0,10){\circle{2}}
  \put(10,10){\circle{2}}
  \put(20,10){\circle{2}}
  \put(30,10){\circle{2}}
  \put(40,10){\circle{2}}
  \put(50,10){\circle{2}}
  \put(60,10){\circle{2}}
  \put(20,20){\circle{2}}
  \put(1,10){\line(1,0){8}}
  \put(11,10){\line(1,0){8}}
  \put(21,10){\line(1,0){8}}
  \put(31,10){\line(1,0){8}}
  \put(41,10){\line(1,0){8}}
  \put(51,10){\line(1,0){8}}
  \put(20,11){\line(0,1){8}}
  \put(-2,3){$\alpha_1$}
  \put(8,3){$\alpha_3$}
  \put(18,3){$\alpha_4$}
  \put(28,3){$\alpha_5$}
  \put(38,3){$\alpha_6$}
  \put(48,3){$\alpha_7$}
  \put(58,3){$\alpha_8$}
  \put(22,19){$\alpha_2$}
  \end{picture}
\end{center}  

\setlength{\unitlength}{1mm}
\begin{center}
  \begin{picture}(30, 15)(0,0)
  \put(-35,9){Type $F_4$:}
  \put(0,10){\circle{2}}
  \put(10,10){\circle{2}}
  \put(20,10){\circle{2}}
  \put(30,10){\circle{2}}
  \put(1,10){\line(1,0){8}}
  \put(11,10.5){\line(1,0){8}}
  \put(11,9.5){\line(1,0){8}}
  \put(21,10){\line(1,0){8}}
  \put(-2,3){$\alpha_1$}
  \put(8,3){$\alpha_2$}
  \put(18,3){$\alpha_3$}
  \put(28,3){$\alpha_4$}
  \qbezier(14.5,11.5)(17.5,10)(14.5,8.5)
  \end{picture}
\end{center}  

\setlength{\unitlength}{1mm}
\begin{center}
  \begin{picture}(10, 15)(0,0)
  \put(-45,9){Type $G_2$:}
  \put(0,10){\circle{2}}
  \put(10,10){\circle{2}}
  \put(1,10){\line(1,0){8}}
  \put(1,10.7){\line(1,0){8}}
  \put(1,9.3){\line(1,0){8}}
  \put(-2,3){$\alpha_1$}
  \put(8,3){$\alpha_2$}
  \qbezier(5.5,11.5)(2.5,10)(5.5,8.5)
  \end{picture}
\end{center}  
\caption{The Dynkin diagrams.}
\label{pic: Dynkin diagrams}
\end{figure}
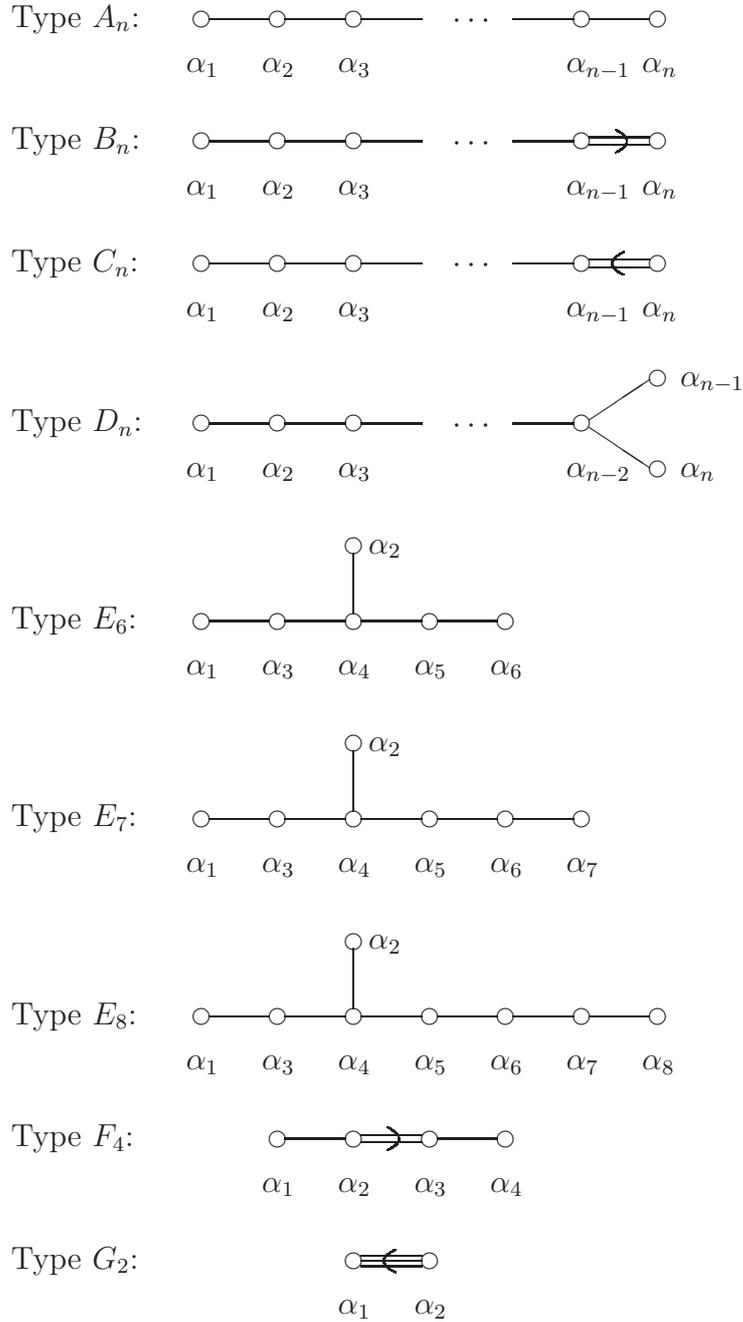

\begin{definition} $($\cite[Definition~3.2]{Dre2}$)$
A subset of simple roots $K \subset \Sigma$ is called \emph{connected} if the induced
Dynkin diagram with the vertices $K$ is a connected subgraph of the Dynkin diagram of $\Phi$.
\end{definition}

Note that a connected subset $K \subset \Sigma$ has its own Lie type.
We denote by $\Phi_K$ the root subsystem associated with $K$. 
In other words, $K$ is the simple system of $\Phi_K$.

\begin{example}
When $\Phi$ is of type $E_6$, an ordering of the simple roots $\alpha_1,\ldots,\alpha_6$ is described in Figure~\ref{pic: Dynkin diagrams}.
For instance, if we take $K=\{\alpha_1,\alpha_2,\alpha_3,\alpha_4,\alpha_5 \}$, then $\Phi_K$ is of type $D_5$.
If we take $K'=\{\alpha_1,\alpha_3,\alpha_4,\alpha_5,\alpha_6 \}$, then $\Phi_{K'}$ is of type $A_5$.
\end{example}

Every subset $K \subset \Sigma$ can be written as $K = K_1 \sqcup \cdots \sqcup K_m$ where each $K_i$ is a maximal connected subset. 
Note that each connected subset corresponds to its own Lie type.

\begin{definition} $($\cite[Definition~3.3]{Dre2}$)$ \label{def:vK}
For a subset $K\subset \Sigma$, we define $v_K \in W$ as follows.
\begin{enumerate}
\item For a connected subset $K \subset \Sigma$, we fix an ordering of the simple roots in $\Phi_K$ as in Figure~\ref{pic: Dynkin diagrams}.
Then we define $v_K \in W$ as 
\begin{align*}
v_K \coloneqq \prod_{\Root_K(i)=1}^{|K|} s_i 
\end{align*}
where $\Root_K(i)$ is the fixed index of the corresponding root in the root system $\Phi_K$. 
\item If a subset $K\subset \Sigma$ can be decomposed into connected components as $K = K_1 \sqcup \cdots \sqcup K_m$, then we define $v_K \coloneqq v_{K_1} v_{K_2} \cdots v_{K_m}$.
\end{enumerate}
\end{definition}

\begin{example}
Let $K=\{\alpha_1,\alpha_2,\alpha_3,\alpha_4,\alpha_5 \}$ be a subset of the simple roots of $\Phi_{E_6}$ as in the previous example. 
Noting that $\Phi_K$ is of type $D_5$, fix an ordering of elements of $K$ by $\Root_K(1)=1, \Root_K(3)=2, \Root_K(4)=3, \Root_K(5)=4, \Root_K(2)=5$. 
Then we have $v_K=s_1s_3s_4s_5s_2$.
If we take $K'=\{\alpha_1,\alpha_3,\alpha_4,\alpha_5,\alpha_6 \}$ in $\Phi_{E_6}$, then $\Phi_{K'}$ is of type $A_5$. When we fix an ordering of elements of $K'$ by $\Root_{K'}(1)=1, \Root_{K'}(3)=2, \Root_{K'}(4)=3, \Root_{K'}(5)=4, \Root_{K'}(6)=5$, one has $v_{K'}=s_1s_3s_4s_5s_6$.
\end{example}

These elements $v_K$ play an important role in Peterson Schubert calculus as described below.

\begin{remark}
In the previous example, if we change an ordering of elements of $K'$ as $\Root_{K'}(6)=1, \Root_{K'}(5)=2, \Root_{K'}(4)=3, \Root_{K'}(3)=4, \Root_{K'}(1)=5$, then we have $v_{K'}=s_6s_5s_4s_3s_1$ which is different than the element $s_1s_3s_4s_5s_6$. 
However, we will see that the Peterson Schubert classes associated to these elements are the same (Remark~\ref{remark:GiambelliPetGeneralization}). 
\end{remark}

Take a regular nilpotent element $N \in \mathfrak{g}$. 
We may assume $N$ is in the standard form, i.e. $N=\sum_{i=1}^n E_{\alpha_i}$ where $E_{\alpha_i}$ is a basis of $\mathfrak{g}_{\alpha_i}$. 
We have already observed in the introduction that the \emph{Peterson variety} $\Pet_{\Phi}$ is defined by
\begin{align*}
\Pet_{\Phi} \coloneqq \left\{gB \in G/B \mid \mbox{Ad}(g^{-1})(N) \in \mathfrak{b} \oplus \bigoplus_{i=1}^{n} \mathfrak{g}_{-\alpha_i} \right\},
\end{align*}
which is irreducible and its complex dimension has been explicitly expressed as $n=\rank\Phi$ (\cite[Corollary~14]{Pre18}).
Let $B^{-}=w_0 B w_0$ be the opposite Borel subgroup where $w_0$ is the longest element of $W$. 
The opposite Schubert variety associated with $w \in W$ is defined to be the closure $\overline{B^{-}wB/B}$ of $B^{-}$-orbit of $w$ in $G/B$.
We denote by $\sigma_w \in H^*(G/B)$ the Schubert class, namely the Poincar\'e dual of the opposite Schubert variety $\overline{B^{-}wB/B}$. 
Let $p_w$ be the image of $\sigma_w$ under the restriction map $H^*(G/B) \to H^*(\Pet_{\Phi})$ which is called the \emph{Peterson Schubert class}. 

\begin{theorem} $($\cite[Theorem~3.5]{Dre2}$)$ \label{Theorem_Peterson_basis}
The set $\{p_{v_K} \mid K \subset \Sigma \}$ forms a basis of $H^*(\Pet_{\Phi})$.
\end{theorem}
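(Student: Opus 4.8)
The plan is to deduce the basis statement from a dimension count together with a triangularity property of the equivariant restrictions of the Peterson Schubert classes at the torus-fixed points. Throughout, write $n=\rank\Phi$, so that $\Sigma$ has $n$ elements and the subsets $K\subseteq\Sigma$ number $2^{n}$.

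First I would fix a circle action. A generic cocharacter of $T$ produces a subtorus $S^1\subseteq T$ preserving $\Pet_{\Phi}$ whose fixed-point set is exactly $\{v_K\mid K\subseteq\Sigma\}$, a set of $2^{n}$ isolated points. Because $\Pet_{\Phi}$ admits a paving by affine cells and its cohomology vanishes in odd degrees, this action is equivariantly formal; consequently $H^*_{S^1}(\Pet_{\Phi})$ is a free module over $R:=H^*_{S^1}(\pt)\cong\R[t]$, the localization map $\iota\colon H^*_{S^1}(\Pet_{\Phi})\to\bigoplus_{K}H^*_{S^1}(\pt)$ to the fixed points is injective, and
\begin{align*}
\dim_{\R}H^*(\Pet_{\Phi})=\#(\Pet_{\Phi})^{S^1}=2^{n}.
\end{align*}
Since there are exactly $2^{n}$ classes $p_{v_K}$, it suffices to show that the equivariant lifts $p^{S^1}_{v_K}$, the images of the equivariant Schubert classes $\sigma^{S^1}_{v_K}$, form a free $R$-basis of $H^*_{S^1}(\Pet_{\Phi})$; setting $t=0$ then yields the desired $\R$-basis $\{p_{v_K}\}$ of $H^*(\Pet_{\Phi})$.

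The key input is the localization of these classes. Each $v_K$ is a Coxeter element of the parabolic subgroup $W_K$, hence $\ell(v_K)=|K|$ and $p^{S^1}_{v_K}\in H^{2|K|}_{S^1}(\Pet_{\Phi})$. Since fixed-point restriction commutes with the inclusion $\Pet_{\Phi}\hookrightarrow G/B$, one has $p^{S^1}_{v_K}|_{v_J}=\sigma^{S^1}_{v_K}|_{v_J}$, and for opposite Schubert classes this restriction is nonzero exactly when $v_J\geq v_K$ in Bruhat order. Using that $v_K\leq v_J$ if and only if $K\subseteq J$, a comparison that for these Coxeter elements reduces to containment of supports, I obtain the triangular vanishing $p^{S^1}_{v_K}|_{v_J}=0$ unless $K\subseteq J$. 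Moreover the diagonal value $p^{S^1}_{v_K}|_{v_K}$ is, up to a nonzero scalar, the equivariant Euler class of the tangent directions at $v_K$ along which the moment map decreases; in particular it is a nonzero homogeneous element of $R$ of degree $|K|$, matching the Morse index $2|K|$ of the fixed point $v_K$.

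With this data in hand the conclusion is formal. The triangular vanishing together with the identification of each diagonal restriction $p^{S^1}_{v_K}|_{v_K}$ with the equivariant Euler class of the descending tangent space at $v_K$ is exactly the hypothesis of the standard module-basis criterion for equivariantly formal spaces (Goresky--Kottwitz--MacPherson, Guillemin--Sternberg), which guarantees that $\{p^{S^1}_{v_K}\}$ generates $H^*_{S^1}(\Pet_{\Phi})$ as a free $R$-module; since there are $2^{n}$ of them and $H^*_{S^1}(\Pet_{\Phi})$ has rank $2^{n}$ over $R$, a surjection of free modules of equal finite rank is an isomorphism, so they form a free $R$-basis. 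Independence alone is visible more cheaply: ordering the subsets by a linear extension of inclusion makes the localization matrix $\big(p^{S^1}_{v_K}|_{v_J}\big)$ lower triangular with nonzero diagonal, so a minimal-index argument kills any $R$-linear relation, $R$ being a domain. Reducing modulo $(t)$ then produces the $\R$-basis $\{p_{v_K}\}$ of $H^*(\Pet_{\Phi})$. I expect the main obstacle to be this localization computation, namely establishing the Bruhat characterization $v_K\leq v_J\iff K\subseteq J$ and, above all, verifying that $p^{S^1}_{v_K}|_{v_K}$ realizes the Euler class of the descending tangent space rather than merely some nonzero element of the correct degree; once that is secured, everything else is bookkeeping with equivariant formality.
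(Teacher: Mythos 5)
The paper itself gives no proof of Theorem \ref{Theorem_Peterson_basis}; it is quoted from \cite{Dre2}, whose argument (following \cite{HaTy} in type $A$) runs exactly along the lines you propose: an $S^1$-action with isolated fixed points, equivariant formality coming from a paving by affines, injectivity of restriction to the fixed points, a triangular restriction matrix, and the count $2^n=2^n$. So your strategy is the right one, but as written it rests on a false identification of the fixed-point set, which breaks the central localization step.

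The $S^1$-fixed point set of $\Pet_\Phi$ is not $\{v_K \mid K \subseteq \Sigma\}$ but $\{w_K \mid K \subseteq \Sigma\}$, where $w_K$ is the longest element of the parabolic subgroup $W_K$; the Coxeter-type elements $v_K$ index the classes, not the fixed points. (This is visible in the paper itself: all fixed-point evaluations in Theorem \ref{theorem:MonkPet} and \eqref{eq:ciKL} are taken at $w_K$, $w_J$. Concretely, in type $A_2$ the fixed points of $\Pet_3$ are $e$, $s_1$, $s_2$, $s_1s_2s_1$, and the permutation flag of $v_\Sigma = s_1s_2$ does not lie in $\Pet_3$.) Consequently the matrix $\bigl(p_{v_K}|_{v_J}\bigr)$ on which your triangularity and minimal-index arguments rest does not exist; every restriction must be taken at $w_J$. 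The repair is what \cite{Dre2} actually does: $\sigma_{v_K}|_{w_J} \neq 0$ iff $v_K \le w_J$, and $v_K \le w_J$ iff $K \subseteq J$ (anything Bruhat-below $w_J$ lies in $W_J$, and $w_J$ is the maximum of $W_J$), while Billey's formula together with the substitution $\alpha_i \mapsto t$ makes each diagonal entry $p_{v_K}(w_K)$ a strictly positive multiple of $t^{|K|}$. Two further points. First, your stated equivalence $v_K \le v_J \iff K \subseteq J$ is false in general with the paper's conventions: in $E_6$ one has $v_{D_5}=s_1s_3s_4s_5s_2 \not\le v_{E_6}=s_1s_2s_3s_4s_5s_6$, since deleting $s_6$ from any reduced word of $v_{E_6}$ yields the element $s_1s_2s_3s_4s_5 \neq s_1s_3s_4s_5s_2$ (because $s_2$ and $s_4$ do not commute); only the true direction ($v_K \le v_J \Rightarrow K \subseteq J$) is needed for triangularity, but the iff should not be asserted. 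Second, identifying the diagonal entry with the equivariant Euler class of a ``descending tangent space'' is both unjustified and unavailable: $\Pet_\Phi$ is singular in general, so the smooth Morse/GKM basis criterion you invoke for spanning does not apply. Spanning must instead come from your ``cheaper'' route: triangularity at the $w_J$ gives independence of the $2^n$ classes, and the affine paving gives $\dim_\R H^*(\Pet_\Phi) = 2^n$.
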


\begin{remark}
Theorem~\ref{Theorem_Peterson_basis} for type $A$ was proved in \cite{HaTy}. 
\end{remark}

By Theorem~\ref{Theorem_Peterson_basis} the product of two Peterson Schubert classes $p_{v_I}$ and $p_{v_K}$ can be written as a linear combination of Peterson Schubert classes $\{p_{v_J} \mid J \subset \Sigma \}$: 
\begin{align} \label{eq:Peterson_Schubert_calculus}
p_{v_I} \cdot p_{v_K} = \sum_{J \subset \Sigma} c_{IK}^J p_{v_J}. 
\end{align}
To compute the coefficients $c_{IK}^J$ is called \emph{Peterson Schubert calculus}.
There are several results on Peterson Schubert calculus (\cite{AHKZ, BaHa, Dre2, GoGo, GMS, HaTy}). 
We use results in \cite{Dre2} for the purpose of the calculation of the mixed $\Phi$-Eulerian numbers.

Fix a reduced word decomposition for $w = s_{b_1} s_{b_2} \cdots s_{b_{\ell}}$ where $\ell := \ell(w)$ is the length of $w \in W$.
For $j \leq \ell$, set $r(i,w) = s_{b_1} s_{b_2} \cdots s_{b_{i-1}}(\alpha_{b_i})$. 
Then we define
\begin{align} \label{eq:Billey}
\sigma_v(w) \coloneqq \sum_{{\rm reduced \ words} \atop v=s_{b_{j_1}}s_{b_{j_2}} \cdots s_{b_{j_{\ell(v)}}}} \prod_{i=1}^{\ell(v)} r(j_i,w).
\end{align}
Note that $\sigma_v(w)$ is a homogeneous polynomial of degree $\ell(v)$ in the variables $\alpha_1,\ldots, \alpha_{n}$.
Consider the map $\R[\alpha_1,\ldots,\alpha_n] \to \R[t]$ which sends $\alpha_i$ to $t$ for all $1 \leq i \leq n$, and define $p_v(w)$ by the image of $\sigma_v(w)$ under the map $\R[\alpha_1,\ldots,\alpha_n] \to \R[t]$. 
Note that $p_v(w)$ is a homogeneous polynomial of degree $\ell(v)$ in the variable $t$.

\begin{remark}
The polynomial $\sigma_v(w)$ is, in fact, the $w$-component of the $T$-equivariant Schubert class $\sigma_v \in H_T^{2\ell(v)}(G/B)$ and the formula in \eqref{eq:Billey} is known to be Billey's formula (\cite{Bil}).
The $T$-action on $G/B$ does not preserve $\Pet_{\Phi}$, but there is one-dimensional subtorus $S^1 \subset T$ such that $S^1$ preserves $\Pet_{\Phi}$. 
The polynomial $p_v(w)$ means the $w$-component of the $S^1$-equivariant Peterson Schubert class $p_v \in H_{S^1}^{2\ell(v)}(\Pet_{\Phi})$. 
More precisely, the definitions of $p_v(w)$ geometrically make sense for the $S^1$-fixed points $w$.
\end{remark}

For a subset $K \subset \Sigma$, define the parabolic subgroup $W_K \subset W$, namely $W_K$ is generated by $s_k$'s with $\alpha_k \in K$. 
Let $w_K$ be the longest element of $W_K$.
We use the following two results which are the special case of the results in \cite{Dre2}.

\begin{theorem} $($\cite[Theorem~4.2]{Dre2}$)$ \label{theorem:MonkPet}
Let $K \subset \Sigma$ be a connected subset.
Let $i$ be a positive integer such that $\alpha_i \in K$.
Then, the Peterson Schubert classes satisfy 
\begin{align*}
p_{s_i} \cdot p_{v_K} = \sum_{J \supset K \atop |J|=|K|+1} c_{i,K}^J p_{v_J}
\end{align*}
where the coefficients $c_{i,K}^J$ are non-negative rational numbers given by 
\begin{align} \label{eq:ciKL}
c_{i,K}^J = \big(p_{s_i}(w_J) - p_{s_i}(w_K) \big) \cdot \frac{p_{v_K}(w_J)}{p_{v_J}(w_J)}.
\end{align}
\end{theorem}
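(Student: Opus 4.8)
The plan is to lift the identity to the $S^1$-equivariant cohomology ring $H^*_{S^1}(\Pet_{\Phi})$ and then recover the ordinary formula by applying the forgetful map. First I would recall that the $S^1$-fixed points of $\Pet_{\Phi}$ are exactly the points $w_K$ for $K \subset \Sigma$, that the restriction map $H^*_{S^1}(\Pet_{\Phi}) \to \bigoplus_{K} \R[t]$ is injective, and that the $w_K$-component $p_v(w_K)$ is computed from Billey's formula \eqref{eq:Billey}. The crucial structural input is the triangularity of the restriction matrix: since $v_J$ has support exactly $J$, one has $v_J \leq w_K$ in Bruhat order if and only if $J \subseteq K$, and hence $p_{v_J}(w_K) \neq 0$ if and only if $J \subseteq K$, while always $p_{v_J}(w_J) \neq 0$. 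I would also use the equivariant refinement of Theorem~\ref{Theorem_Peterson_basis}, namely that $\{p_{v_J} \mid J \subset \Sigma\}$ is a free $\R[t]$-basis of $H^*_{S^1}(\Pet_{\Phi})$.

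Granting this, I would expand the equivariant product in the basis as $p_{s_i} \cdot p_{v_K} = \sum_{J'} d^{J'} p_{v_{J'}}$ with $d^{J'} \in \R[t]$ homogeneous of degree $|K|+1-|J'|$. The first step is to show $d^{J'} = 0$ whenever $J' \not\supseteq K$. For this I would restrict the equation to $w_L$ with $K \not\subseteq L$, where the left-hand side vanishes because $p_{v_K}(w_L)=0$; an induction on $|L|$ together with triangularity then forces $d^{J'}=0$ for every $J' \not\supseteq K$. The degree constraint (no homogeneous element has negative degree) leaves only two kinds of surviving terms: the term $J'=K$, whose coefficient $d^K$ has degree one, and the subsets $J \supseteq K$ with $|J|=|K|+1$, whose coefficients $d^J$ are constants.

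Next I would pin down these coefficients by localizing at the appropriate fixed points. Restricting to $w_K$ and using $p_{v_J}(w_K)=0$ for $J \supsetneq K$ gives $d^K = p_{s_i}(w_K)$. Restricting to $w_J$ for $J \supseteq K$ with $|J|=|K|+1$, only the terms $J'=K$ and $J'=J$ survive by triangularity, which yields
\[
d^J \, p_{v_J}(w_J) = \big(p_{s_i}(w_J) - p_{s_i}(w_K)\big)\, p_{v_K}(w_J),
\]
that is, exactly the asserted formula \eqref{eq:ciKL}; a degree count confirms that $d^J$ is a constant rational number. Finally, applying the forgetful map $H^*_{S^1}(\Pet_{\Phi}) \to H^*(\Pet_{\Phi})$, which sends $t \mapsto 0$ and $p_{v_J} \mapsto p_{v_J}$, annihilates the diagonal term $d^K p_{v_K}$ (since $d^K$ has positive degree in $t$) and recovers the stated identity in ordinary cohomology with $c_{i,K}^J = d^J$.

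The main obstacle is establishing the equivariant inputs rigorously for the singular variety $\Pet_{\Phi}$: the identification of the $S^1$-fixed locus with $\{w_K\}$, the injectivity of localization, the freeness of $\{p_{v_J}\}$ over $\R[t]$, and especially the Bruhat-order characterization $v_J \leq w_K \iff J \subseteq K$ that underlies the triangularity and makes the inductive vanishing of the off-support coefficients work. The non-negativity and rationality of $c_{i,K}^J$ would then follow from the manifestly positive form of Billey's formula together with the monotonicity $p_{s_i}(w_J) \geq p_{s_i}(w_K)$ and the positivity of $p_{v_K}(w_J)/p_{v_J}(w_J)$, but verifying these inequalities from the explicit expansions requires separate care.
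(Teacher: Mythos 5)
The paper itself contains no proof of this statement: it is imported directly from \cite[Theorem~4.2]{Dre2}, so the only meaningful comparison is with Drellich's proof in that reference (which generalizes the type-$A$ argument of \cite{HaTy}). Your argument reconstructs that proof essentially step for step: lift to $S^1$-equivariant cohomology, expand $p_{s_i}\cdot p_{v_K}$ in the free $\R[t]$-basis $\{p_{v_J}\}$, kill the coefficients with $J'\not\supseteq K$ via the triangularity $p_{v_J}(w_K)\neq 0 \iff v_J\le w_K \iff J\subseteq K$ together with your induction on $|L|$, determine the surviving coefficients by localizing at $w_K$ and at $w_J$, and recover the ordinary statement by setting $t=0$, which annihilates the diagonal term $p_{s_i}(w_K)\,p_{v_K}$. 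Your outline is correct, and the technical inputs you flag --- the identification $\Pet_{\Phi}^{S^1}=\{w_K \mid K\subseteq \Sigma\}$, injectivity of localization for the singular Peterson variety, and $\R[t]$-freeness of the Peterson Schubert classes --- are precisely the results established in \cite{HaTy} and \cite{Dre2}, so nothing in the argument would fail once those citations are invoked.
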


\begin{remark} \label{remark:GiambelliPet}
Under the assumption of Theorem~\ref{theorem:MonkPet}, one can see that $p_{s_i}(w_J) = p_{s_i}(w_K)$ unless $J$ is connected. 
By \eqref{eq:ciKL} we may assume that $J$ is connected in the summation.
\end{remark}

\begin{theorem} $($\cite[Theorem~5.5]{Dre2}$)$ \label{theorem:GiambelliPet}
For a connected subset $K \subset \Sigma$,
we have
\begin{align*}
\frac{|K|!}{|\Reduce(v_K)|} \, p_{v_K}=\prod_{\alpha_k \in K} p_{s_k},
\end{align*}
where $\Reduce(w)$ denotes the set of reduced words of $w \in W$.
Note that $|\Reduce(v_K)|$ is explicitly described as follows: 
\begin{equation*} 
|\Reduce(v_K)| = \begin{cases}
1 &\textrm{when } \Phi_K \textrm{ is of type } A_n, B_n, C_n, F_4 \textrm{ or } G_2; \\
2 &\textrm{when } \Phi_K \textrm{ is of type } D_n; \\
3 &\textrm{when } \Phi_K \textrm{ is of type } E_n \ (n=6,7,8). 
\end{cases}
\end{equation*}
\end{theorem}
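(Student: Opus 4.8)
The plan is to prove the stronger $S^1$-equivariant identity
\[
\prod_{\alpha_k \in K} p_{s_k} = \frac{|K|!}{|\Red(v_K)|}\, p_{v_K} \qquad \text{in } H^*_{S^1}(\Pet_{\Phi}),
\]
and then push it to ordinary cohomology by the forgetful map, which sends each equivariant Peterson Schubert class to its ordinary counterpart. Since $\Pet_{\Phi}$ has no odd cohomology, the restriction to the $S^1$-fixed points $\{w_L \mid L \subseteq \Sigma\}$ is injective, so I may work with the localizations $p_v(w_L)$, the polynomials produced by Billey's formula \eqref{eq:Billey}. The first ingredient is a support lemma: $p_{v_J}(w_L) = 0$ unless $J \subseteq L$. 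This is immediate from \eqref{eq:Billey}, because $\sigma_{v_J}(w_L)$ is a sum over reduced subwords of $w_L$ spelling $v_J$, and these exist only when $v_J \le w_L$ in Bruhat order; as $w_L$ is the longest element of the parabolic $W_L$, one has $v_J \le w_L \iff v_J \in W_L \iff J \subseteq L$. In particular $p_{s_k}(w_L) = 0$ unless $\alpha_k \in L$, whereas the diagonal value $p_{v_J}(w_J)$ is a nonzero multiple of $t^{|J|}$ (each summand is a product of positive roots).

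Next I would establish the proportionality $\prod_{\alpha_k \in K} p_{s_k} = a_K\, p_{v_K}$ with $a_K \in \R$. Expand $\prod_{\alpha_k \in K} p_{s_k} = \sum_{I} B_I(t)\, p_{v_I}$ in the basis of Theorem~\ref{Theorem_Peterson_basis}; comparing cohomological degrees forces $B_I$ homogeneous with $|I| \le |K|$, and $B_I$ constant when $|I| = |K|$. Put $S = \{I \mid B_I \ne 0\}$ and take $I_0 \in S$ minimal. Restricting to $w_{I_0}$, the support lemma kills every term with $I \not\subseteq I_0$ while minimality kills every term with $I \subsetneq I_0$, leaving $\prod_{\alpha_k \in K}p_{s_k}(w_{I_0}) = B_{I_0}(t)\, p_{v_{I_0}}(w_{I_0}) \ne 0$; but the left-hand side vanishes unless $K \subseteq I_0$. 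Thus every minimal element of $S$ contains $K$, and since every element of $S$ dominates a minimal one, $I \supseteq K$ for all $I \in S$. Together with $|I| \le |K|$ this forces $I = K$, so $S \subseteq \{K\}$ and the identity holds already equivariantly, with $a_K = \prod_{\alpha_k \in K} p_{s_k}(w_K) \big/ p_{v_K}(w_K)$.

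It remains to evaluate the scalar $a_K$, and this is where I expect the real work to lie. Fixing a reduced word $w_K = s_{c_1}\cdots s_{c_N}$ and writing $h_j$ for the height of the root $r(j,w_K)$, Billey's formula gives $p_{s_k}(w_K) = \big(\sum_{c_j = k} h_j\big)\,t$ and $p_{v_K}(w_K) = \big(\sum_R \prod_{j \in R} h_j\big)\,t^{|K|}$, the second sum running over reduced subwords $R$ of $w_K$ that spell $v_K$; hence
\[
a_K = \frac{\prod_{\alpha_k \in K}\big(\sum_{c_j = k} h_j\big)}{\sum_{R} \prod_{j \in R} h_j}.
\]
The task is to show this ratio equals $|K|!/|\Red(v_K)|$. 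I would argue by induction on $|K|$ (base case $|K|=1$ being trivial), removing a leaf $\alpha_{i_0}$ of the tree-shaped Dynkin diagram of $K$: with $K' = K \setminus \{\alpha_{i_0}\}$ the induction hypothesis and the proportionality above yield $a_K = a_{K'}\cdot \big(p_{s_{i_0}}(w_K)\, p_{v_{K'}}(w_K)/p_{v_K}(w_K)\big)$, and since $p_{s_{i_0}}(w_{K'}) = 0$ the bracketed factor coincides with the Monk coefficient $c_{i_0,K'}^{K}$ of \eqref{eq:ciKL}, even though $\alpha_{i_0}\notin K'$.

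Carrying out the localizations at $w_K$ and $w_{K'}$ and checking that the resulting telescoping product collapses to $|K|!/|\Red(v_K)|$ is the main obstacle: the factorial reflects the $|K|$ independent choices of a position for each generator in the numerator, while the denominator records the reduced subwords of the Coxeter element $v_K$, and disentangling the two requires the combinatorics of reduced words of the longest element $w_K$. Finally, the explicit values of $|\Red(v_K)|$ follow from a direct count of the commutation classes of reduced words of $v_K$, equivalently the linear extensions of the heap of $v_K$ on the Dynkin tree: a total order for the linear diagrams (types $A_n, B_n, C_n, F_4, G_2$, giving $1$), the single fork of type $D_n$ (giving $2$), and the triple point of type $E_n$ (giving $3$).
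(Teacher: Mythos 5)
Your reduction of the theorem to a scalar identity is sound: the support lemma $p_{v_J}(w_L)=0$ unless $J\subseteq L$, the degree bound $|I|\le|K|$, and the minimal-element argument do force
\begin{align*}
\prod_{\alpha_k\in K}p_{s_k}\;=\;a_K\,p_{v_K},
\qquad
a_K=\frac{\prod_{\alpha_k\in K}p_{s_k}(w_K)}{p_{v_K}(w_K)},
\end{align*}
granting the \emph{equivariant} form of Theorem~\ref{Theorem_Peterson_basis} (that the classes $p_{v_L}$ are an $\R[t]$-module basis of $H^*_{S^1}(\Pet_{\Phi})$, which is in \cite{Dre2} but is not the ordinary-cohomology statement you invoke). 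The genuine gap is that you never evaluate $a_K$, and that evaluation \emph{is} the theorem: it is exactly where the factor $|K|!$ and the type-dependent constants $|\Red(v_K)|\in\{1,2,3\}$ arise. Your leaf-removal induction correctly reduces the problem to showing
\begin{align*}
p_{s_{i_0}}(w_K)\,\frac{p_{v_{K'}}(w_K)}{p_{v_K}(w_K)}
\;=\;|K|\cdot\frac{|\Red(v_{K'})|}{|\Red(v_K)|},
\end{align*}
but this is a nontrivial Billey-formula computation at the longest element $w_K$ (heights of all inversions of $w_K$ and a sum over reduced subwords of $w_K$ spelling $v_K$), and it is genuinely type-dependent: it has to detect, e.g., the jump from $|\Red(v_{K'})|=2$ for $D_{n-1}\subset E_n$ to $|\Red(v_K)|=3$. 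You explicitly defer it as ``the main obstacle,'' and no result quoted in this paper does it for you: observing that the bracketed factor formally matches \eqref{eq:ciKL} with $i_0\notin K'$ gives no leverage, because Theorem~\ref{theorem:MonkPet} is only stated (and only in that form valid) for $\alpha_i\in K$. Likewise your closed-form count of $|\Red(v_K)|$ via linear extensions of the heap is asserted, not proved (though that part is elementary). As it stands you have established proportionality of the two sides, not the formula.

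For comparison: the paper itself contains no proof of this statement — it is quoted from \cite[Theorem~5.5]{Dre2}, where the scalar is pinned down by precisely the kind of localization computations you are postponing. If you want to close the gap using only tools visible in this paper, Remark~\ref{remark:GiambelliPetGeneralization} is the efficient route: Klyachko's formula for $X_\Phi$ together with \cite{AHMMS} gives
$p_{w}=\frac{1}{\ell(w)!}\sum_{i_1\cdots i_{\ell(w)}\in\Red(w)}\varpi_{i_1}\cdots\varpi_{i_{\ell(w)}}$,
and for $w=v_K$ every reduced word uses each letter of $K$ exactly once, so every summand equals $\prod_{\alpha_k\in K}\varpi_k=\prod_{\alpha_k\in K}p_{s_k}$ by \eqref{eq:PetersonSchubert-varpi}; this yields $a_K=|K|!/|\Red(v_K)|$ immediately, bypassing the telescoping computation entirely.
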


By abuse of notation, for the fundamental weight $\varpi_i \in \Lambda \ (1 \leq i \leq n)$, we denote by the same symbol
\begin{align} \label{eq:varpianytypePet}
&\varpi_i \coloneqq c_1(L_{\varpi_i}^*|_{\Pet_{\Phi}})=-c_1(L_{\varpi_i}|_{\Pet_{\Phi}}) \in H^2(\Pet_\Phi), 
\end{align}
the first Chern class of the line bundle $L_{\varpi_i}^*|_{\Pet_{\Phi}}$ over $\Pet_\Phi$, as discussed in Remark~\ref{rem:notation_varpi}.
It is well-known that the Schubert class $\sigma_{s_i}$ is equal to $c_1(L_{\varpi_i}^*)$ in $H^2(G/B)$ (e.g. \cite{BGG}), so we have 
\begin{align} \label{eq:PetersonSchubert-varpi}
\varpi_i = p_{s_i} 
\end{align}
in $H^2(\Pet_{\Phi})$.

\begin{remark} \label{remark:GiambelliPetGeneralization}
By \cite[Theorem~4]{Kly} (see also \cite{Kly2,NT2}) the image of the Schubert class $\sigma_w \in H^{2\ell(w)}(G/B)$ under the restriction map $\iota^*: H^*(G/B) \to H^*(X_{\Phi})$ is given by 
\begin{align*}
\iota^*(\sigma_w)= \frac{1}{\ell(w)!} \sum_{i_1i_2\cdots i_{\ell(w)} \in \Reduce(w)} \varpi_{i_1}\varpi_{i_2}\cdots\varpi_{i_{\ell(w)}}.
\end{align*}
Combining this formula with \cite[Theorems~1.1 and 1.4]{AHMMS}, we obtain
\begin{align*}
p_{w}= \frac{1}{\ell(w)!} \sum_{i_1i_2\cdots i_{\ell(w)} \in \Reduce(w)} \varpi_{i_1}\varpi_{i_2}\cdots\varpi_{i_{\ell(w)}}.
\end{align*}
Noting that \eqref{eq:PetersonSchubert-varpi}, this formula gives a generalization of Theorem~\ref{theorem:GiambelliPet}.
\end{remark}

We now explain an analogue of Lemma~\ref{lemm:TypeA}.
Let $K, J \subset \Sigma$ be connected subsets with $J \supset K$ and $|J|=|K|+1$. 
Take a positive integer $i$ such that $\alpha_i \in K$.
Then, we define
\begin{align} \label{eq:miKJ}
m_{i,K}^J \coloneqq \frac{|\Reduce(v_J)|}{|\Reduce(v_K)|} \frac{1}{|J|} \, c_{i,K}^J,
\end{align}
where $c_{i,K}^J$ is given in \eqref{eq:ciKL}.

\begin{lemma} \label{lemm:computationanytype1}
For a connected subset $K \subset \Sigma$ and a positive integer $i$ with $\alpha_i \in K$, we have
\begin{align*}
\varpi_i \cdot \prod_{\alpha_k \in K} \varpi_{k} = \sum_{J \subset \Sigma: \, {\rm connected} \atop J \supset K \, {\rm and } \, |J|=|K|+1} m_{i,K}^J \prod_{\alpha_j \in J} \varpi_{j}, 
\end{align*}
where $\varpi_i$ is defined in \eqref{eq:varpianytypePet}.
\end{lemma}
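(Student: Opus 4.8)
The plan is to deduce the identity by a direct manipulation of Peterson Schubert classes, chaining together the Giambelli-type formula (Theorem~\ref{theorem:GiambelliPet}), the Monk-type formula (Theorem~\ref{theorem:MonkPet}), and the identification $\varpi_i = p_{s_i}$ recorded in \eqref{eq:PetersonSchubert-varpi}. The key observation is that both sides of the claimed equality are, up to explicit rational scalars, products of the classes $p_{s_k}$, so everything can be routed through the single basis class $p_{v_K}$.

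First I would rewrite the left-hand side. Using $\varpi_k = p_{s_k}$ and Theorem~\ref{theorem:GiambelliPet} for the connected subset $K$, we have
\begin{align*}
\varpi_i \cdot \prod_{\alpha_k \in K} \varpi_k = p_{s_i} \cdot \prod_{\alpha_k \in K} p_{s_k} = \frac{|K|!}{|\Red(v_K)|} \, p_{s_i} \cdot p_{v_K}.
\end{align*}
Next I would apply the Monk-type formula (Theorem~\ref{theorem:MonkPet}) to expand $p_{s_i} \cdot p_{v_K}$ into $\sum_{J \supset K,\, |J|=|K|+1} c_{i,K}^J\, p_{v_J}$. By Remark~\ref{remark:GiambelliPet} the coefficient $c_{i,K}^J$ vanishes unless $J$ is connected, so the sum may be restricted to connected subsets $J$ --- which is exactly the index set appearing in the statement, and is precisely what permits the next step.

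Finally, on each connected $J$ I would invert the Giambelli formula once more, writing $p_{v_J} = \tfrac{|\Red(v_J)|}{|J|!} \prod_{\alpha_j \in J} \varpi_j$. Substituting and collecting scalars, the coefficient of $\prod_{\alpha_j \in J} \varpi_j$ becomes $\tfrac{|K|!}{|\Red(v_K)|} \cdot c_{i,K}^J \cdot \tfrac{|\Red(v_J)|}{|J|!}$. Since $|J| = |K|+1$ we have $|J|! = (|K|+1)\,|K|!$, hence $|K|!/|J|! = 1/|J|$, and the coefficient collapses to $\tfrac{|\Red(v_J)|}{|\Red(v_K)|}\cdot\tfrac{1}{|J|}\cdot c_{i,K}^J = m_{i,K}^J$ by the definition \eqref{eq:miKJ}. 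This yields the asserted formula.

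There is no serious obstacle; the argument is essentially bookkeeping once the three cited results are in hand. The one point requiring care is the connectivity restriction on $J$: Theorem~\ref{theorem:GiambelliPet} expresses $p_{v_J}$ as a product of the $p_{s_j}$ \emph{only} when $J$ is connected, so it is essential that Remark~\ref{remark:GiambelliPet} already confines the Monk expansion to connected $J$ before we can invert Giambelli on each summand. Verifying the factorial identity $|K|!/|J|! = 1/|J|$ and matching the resulting scalar against the definition of $m_{i,K}^J$ is then immediate.
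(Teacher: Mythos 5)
Your proposal is correct and follows exactly the paper's own argument: rewrite the left-hand side via $\varpi_k = p_{s_k}$ and Theorem~\ref{theorem:GiambelliPet}, expand $p_{s_i}\cdot p_{v_K}$ by Theorem~\ref{theorem:MonkPet} with the sum restricted to connected $J$ via Remark~\ref{remark:GiambelliPet}, and invert Giambelli on each $p_{v_J}$ so that the scalars collapse to $m_{i,K}^J$ by \eqref{eq:miKJ}. The bookkeeping, including the identity $|K|!/|J|! = 1/|J|$, matches the paper's computation step for step.
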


\begin{proof}
We first note that \eqref{eq:PetersonSchubert-varpi}.
It then follows from Theorems~\ref{theorem:MonkPet} and \ref{theorem:GiambelliPet} (see also Remark~\ref{remark:GiambelliPet}) that
\begin{align*}
\varpi_i \cdot \prod_{\alpha_k \in K} \varpi_{k} &= p_{s_i} \cdot \prod_{\alpha_k \in K} p_{s_k} = \frac{|K|!}{|\Reduce(v_K)|} \, p_{s_i} \cdot p_{v_K} = \sum_{J \subset \Sigma: \, {\rm connected} \atop J \supset K \, {\rm and } \, |J|=|K|+1} \frac{|K|!}{|\Reduce(v_K)|} c_{i,K}^J p_{v_J} \notag \\
&= \sum_{J \subset \Sigma: \, {\rm connected} \atop J \supset K \, {\rm and } \, |J|=|K|+1} \frac{|K|!}{|\Reduce(v_K)|}\frac{|\Reduce(v_J)|}{|J|!} c_{i,K}^J \prod_{\alpha_j \in J} \varpi_{j} = \sum_{J \subset \Sigma: \, {\rm connected} \atop J \supset K \, {\rm and } \, |J|=|K|+1} m_{i,K}^J \prod_{\alpha_j \in J} \varpi_{j}, 
\end{align*}
as desired.
\end{proof}

The following lemma says that $m_{i,K}^J$ depends only on the Dynkin diagrams of $(\Phi_K,\Phi_J)$ and the vertex $\alpha_i$ of the Dynkin diagram of $\Phi_K$.

\begin{lemma} \label{lemm:computationanytype1.5}
Let $\Phi$ and $\Phi'$ be root systems with the simple systems $\Sigma=\{\alpha_1,\ldots,\alpha_n\}$ and $\Sigma'=\{\tilde\alpha_1,\ldots,\tilde\alpha_{m}\}$, respectively.
Let $K, J \subset \Sigma$ be connected subsets with $J \supset K$ and $|J|=|K|+1$. 
Also, let $K', J' \subset \Sigma'$ be connected subsets with $J' \supset K'$ and $|J'|=|K'|+1$. 
Suppose that Lie types of $\Phi_K$ and $\Phi_J$ are the same as those of $\Phi_{K'}$ and $\Phi_{J'}$, respectively.
Let $\varphi$ be an isomorpshism from the Dynkin diagram of $\Phi_J$ to that of $\Phi_{J'}$ such that $\varphi$ induces an isomorphism between the Dynkin diagrams of $\Phi_K$ and $\Phi_{K'}$ by restriction. 
Let $\tilde\alpha_{i'} \in K'$ be the image of $\alpha_i \in K$ under the isomorphism $\varphi$.
Then we have $m_{i,K}^J=m_{i',K'}^{J'}$. 
Namely, $m_{i,K}^J$ depends only on the Dynkin diagrams of $(\Phi_K,\Phi_J)$ and the vertex $\alpha_i$ of the Dynkin diagram of $\Phi_K$.
\end{lemma}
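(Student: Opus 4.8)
The plan is to verify that each factor in the defining expression \eqref{eq:miKJ}--\eqref{eq:ciKL} of $m_{i,K}^J$ is an invariant of the labelled Dynkin data, so that $\varphi$ carries it to the corresponding factor of $m_{i',K'}^{J'}$. First I would note that a Dynkin diagram isomorphism $\varphi$ encodes an isomorphism of Cartan matrices, hence extends to an isomorphism of root systems $\Phi_J\xrightarrow{\sim}\Phi_{J'}$ sending simple roots to simple roots, and thereby to an isomorphism of Coxeter systems $\Psi\colon W_J\xrightarrow{\sim}W_{J'}$ with $\Psi(s_k)=s_{\varphi(k)}$, compatible with the actions on roots. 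Since $\Psi$ preserves the length function, it restricts to $W_K\xrightarrow{\sim}W_{K'}$ and carries $s_i\mapsto s_{i'}$, and the longest elements $w_K\mapsto w_{K'}$, $w_J\mapsto w_{J'}$.

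The engine of the argument is the observation that the specialized polynomials $p_v(w)$ are $\Psi$-invariant. Writing each root $r(j_i,w)$ occurring in \eqref{eq:Billey} in the simple-root basis and applying the substitution $\alpha_k\mapsto t$ replaces it by $\mathrm{ht}(r(j_i,w))\,t$, where $\mathrm{ht}$ is the height, so that
\begin{align*}
p_v(w)=t^{\ell(v)}\sum_{\text{reduced words of }v}\ \prod_{i=1}^{\ell(v)}\mathrm{ht}\big(r(j_i,w)\big),
\end{align*}
a sum of products of root heights. Because $\Psi$ is a height-preserving root-system isomorphism carrying reduced subwords of $w$ spelling $v$ bijectively to those of $\Psi(w)$ spelling $\Psi(v)$, we obtain $p_v(w)=p_{\Psi(v)}(\Psi(w))$ for all $v,w\in W_J$. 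In particular $p_{s_i}(w_J)=p_{s_{i'}}(w_{J'})$ and $p_{s_i}(w_K)=p_{s_{i'}}(w_{K'})$. Moreover the factors $|\Red(v_K)|$, $|\Red(v_J)|$ and $|J|$ in \eqref{eq:miKJ} agree with their primed counterparts, since by Theorem~\ref{theorem:GiambelliPet} the numbers $|\Red|$ depend only on the Lie type, and $\Phi_K,\Phi_J$ share the types of $\Phi_{K'},\Phi_{J'}$.

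The one delicate point — and the main obstacle — is that $v_K$ and $v_J$ are built from the fixed standard labeling of the abstract Lie type, which $\varphi$ need not respect, so $\Psi(v_K)$ and $\Psi(v_J)$ can be Coxeter elements of $W_{K'}$ and $W_{J'}$ different from $v_{K'}$ and $v_{J'}$. I would handle this by first taking $v_{K'}:=\Psi(v_K)$ and $v_{J'}:=\Psi(v_J)$ in \eqref{eq:ciKL}; the displayed engine then yields $p_{v_K}(w_J)=p_{\Psi(v_K)}(w_{J'})$ and $p_{v_J}(w_J)=p_{\Psi(v_J)}(w_{J'})$ (the denominator being a nonempty sum of products of positive heights, hence nonzero, as $v_J\le w_J$ in Bruhat order), so that $c_{i,K}^J$ equals the value of \eqref{eq:ciKL} computed with these transported Coxeter elements. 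It then remains to identify this value with the genuine $c_{i',K'}^{J'}$. Since $\Psi$ preserves the number of reduced words, $\Psi(v_K)$ and $v_{K'}$ (likewise $\Psi(v_J)$ and $v_{J'}$) are Coxeter elements with equal $|\Red|$; as each reduced word of a Coxeter element of $W_{K'}$ uses every generator of $K'$ exactly once, Remark~\ref{remark:GiambelliPetGeneralization} shows they determine the same ordinary Peterson Schubert class, so the structure constants of Theorem~\ref{theorem:MonkPet} are unchanged and the transported value equals $c_{i',K'}^{J'}$. Combining this with the matching of the $|\Red|$ and $|J|$ factors gives $m_{i,K}^J=m_{i',K'}^{J'}$, as asserted.
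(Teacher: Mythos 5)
Your proposal follows the same route as the paper's own proof: reduce to showing $c_{i,K}^J=c_{i',K'}^{J'}$ after matching the factors $|\Red(v_K)|$, $|\Red(v_J)|$, $|J|$ with their primed counterparts (all determined by Lie type), and then compare the four quantities entering \eqref{eq:ciKL} under the transported isomorphism. Your ``engine'' (a Dynkin diagram isomorphism preserves root heights and puts reduced subwords in bijection, so the specialized Billey polynomials of \eqref{eq:Billey} are invariant) is exactly the content hidden behind the paper's one-line assertion ``by the definition of $p_v(w)$ we have $p_{s_i}(w_J)=p_{s_{i'}}(w_{J'})$, \dots''. You are in fact more careful than the paper: you correctly flag that $\varphi$ need not intertwine the fixed standard orderings of Definition~\ref{def:vK}, so that $\Psi(v_K)$ and $v_{K'}$ (likewise $\Psi(v_J)$ and $v_{J'}$) may be different group elements. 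This situation genuinely occurs in the paper's own applications, e.g.\ in Example~\ref{ex:m_iKJ type A}, where the identification uses $\beta_j=\alpha_{b-j+1}$, the reversal of the increasing ordering.

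However, your resolution of this point has a gap. From $|\Red(\Psi(v_K))|=|\Red(v_{K'})|$ and Remark~\ref{remark:GiambelliPetGeneralization} you conclude that $p_{\Psi(v_K)}$ and $p_{v_{K'}}$ coincide as \emph{ordinary} Peterson Schubert classes, and then infer that ``the structure constants of Theorem~\ref{theorem:MonkPet} are unchanged,'' hence that the transported value of \eqref{eq:ciKL} equals $c_{i',K'}^{J'}$. But the transported value is built from the \emph{equivariant} restrictions $p_{\Psi(v_K)}(w_{J'})$ and $p_{\Psi(v_J)}(w_{J'})$ (Billey values), and equality of ordinary classes does not control these; moreover, Theorem~\ref{theorem:MonkPet} and formula \eqref{eq:ciKL} are quoted only for the standard elements $v_{K'},v_{J'}$, so they cannot be invoked for $\Psi(v_K),\Psi(v_J)$ without further justification. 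What is actually needed is the equivariant statement that for \emph{any} Coxeter element $\tilde v$ with support $L$ one has $p_{\tilde v}(w)=\frac{|\Red(\tilde v)|}{|L|!}\prod_{\alpha_k\in L}p_{s_k}(w)$, i.e.\ the equivariant form of Theorem~\ref{theorem:GiambelliPet}, which is insensitive to the choice of ordering (and is how the formula is established in Drellich's work). Granting this, $p_{\Psi(v_K)}(w_{J'})=p_{v_{K'}}(w_{J'})$ and $p_{\Psi(v_J)}(w_{J'})=p_{v_{J'}}(w_{J'})$, the ratio in \eqref{eq:ciKL} is unchanged, and your proof closes. To be fair, the paper's proof passes over this same point in silence, so once this equivariant input is supplied your write-up is the more complete of the two.
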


\begin{proof}
By the definition \eqref{eq:miKJ}, we have
\begin{align*}
m_{i,K}^J = \frac{|\Reduce(v_J)|}{|\Reduce(v_K)|} \frac{1}{|J|} \, c_{i,K}^J \ \ \ {\rm and} \ \ \ 
m_{i',K'}^{J'} = \frac{|\Reduce(v_{J'})|}{|\Reduce(v_{K'})|} \frac{1}{|{J'}|} \, c_{i',K'}^{J'}. 
\end{align*}
Notice that $|J|=\rank(\Phi_J)=\rank(\Phi_{J'})=|J'|$.
Recalling that we fix an ordering of the simple roots in $\Phi_K$ as in Figure~\ref{pic: Dynkin diagrams} in Definition~\ref{def:vK}, we see that $|\Reduce(v_K)|=|\Reduce(v_{K'})|$.
Similarly, one has $|\Reduce(v_J)|=|\Reduce(v_{J'})|$.
Hence, it is enough to prove that $c_{i,K}^J=c_{i',K'}^{J'}$.
By the definition of $p_v(w)$ we have $p_{s_i}(w_J)=p_{s_{i'}}(w_{J'}), p_{s_i}(w_K)=p_{s_{i'}}(w_{K'}), p_{v_K}(w_J)=p_{v_{K'}}(w_{J'})$, and $p_{v_J}(w_J)=p_{v_{J'}}(w_{J'})$.
It follows from \eqref{eq:ciKL} that $c_{i,K}^J=c_{i',K'}^{J'}$, as desired.
\end{proof}

We now make a list of values of $m_{i,K}^J$ in \eqref{eq:miKJ}.
For this purpose we first classify the Dynkin diagrams of $(\Phi_K,\Phi_J)$.

\smallskip
\smallskip

\begin{description}
\item[Type $(A_{\j-1},A_\j)$] Let $\Phi_K$ and $\Phi_J$ be root systems of type $A_{\j-1}$ and $A_\j$, respectively. 
Set $K=\{\beta_1, \ldots, \beta_{\j-1}\}$ and $J=\{\beta_1, \ldots, \beta_\j\}$ and we fix an ordering of the simple roots in $\Phi_K$ and $\Phi_J$ as follows. 
\setlength{\unitlength}{1mm}
\begin{center}
  \begin{picture}(80, 15)(0,0)
  \put(0,10){\circle{2}}
  \put(10,10){\circle{2}}
  \put(20,10){\circle{2}}
  \put(50,10){\circle{2}}
  \put(60,10){\circle{2}}
  \put(1,10){\line(1,0){8}}
  \put(11,10){\line(1,0){8}}
  \put(21,10){\line(1,0){8}}
  \put(33,8.8){$\cdots$}
  \put(41,10){\line(1,0){8}}
  \put(51,10){\line(1,0){8}}
  \put(-2,3){$\beta_1$}
  \put(8,3){$\beta_2$}
  \put(18,3){$\beta_3$}
  \put(48,3){$\beta_{\j-1}$}
  \put(58,3){$\beta_\j$}
  \end{picture}
\end{center}  
\item[Type $(A_{\j-1},B_\j)$] Let $\Phi_K$ and $\Phi_J$ be root systems of type $A_{\j-1}$ and $B_\j$, respectively. 
Set $K=\{\beta_1, \ldots, \beta_{\j-1}\}$ and $J=\{\beta_1, \ldots, \beta_\j\}$ and we fix an ordering of the simple roots in $\Phi_K$ and $\Phi_J$ as follows. 
\setlength{\unitlength}{1mm}
\begin{center}
  \begin{picture}(80, 15)(0,0)
  \put(0,10){\circle{2}}
  \put(10,10){\circle{2}}
  \put(20,10){\circle{2}}
  \put(50,10){\circle{2}}
  \put(60,10){\circle{2}}
  \put(1,10){\line(1,0){8}}
  \put(11,10){\line(1,0){8}}
  \put(21,10){\line(1,0){8}}
  \put(33,8.8){$\cdots$}
  \put(41,10){\line(1,0){8}}
  \put(51,10.5){\line(1,0){8}}
  \put(51,9.5){\line(1,0){8}}
  \put(-2,3){$\beta_1$}
  \put(8,3){$\beta_2$}
  \put(18,3){$\beta_3$}
  \put(48,3){$\beta_{\j-1}$}
  \put(58,3){$\beta_\j$}
  \qbezier(54.5,11.5)(57.5,10)(54.5,8.5)
  \end{picture}
\end{center}  
\item[Type $(B_{\j-1},B_\j)$] Let $\Phi_K$ and $\Phi_J$ be root systems of type $B_{\j-1}$ and $B_\j$, respectively. 
Set $K=\{\beta_2, \ldots, \beta_\j\}$ and $J=\{\beta_1, \ldots, \beta_\j\}$ and we fix an ordering of the simple roots in $\Phi_K$ and $\Phi_J$ as follows. 
\setlength{\unitlength}{1mm}
\begin{center}
  \begin{picture}(80, 15)(0,0)
  \put(0,10){\circle{2}}
  \put(10,10){\circle{2}}
  \put(20,10){\circle{2}}
  \put(50,10){\circle{2}}
  \put(60,10){\circle{2}}
  \put(1,10){\line(1,0){8}}
  \put(11,10){\line(1,0){8}}
  \put(21,10){\line(1,0){8}}
  \put(33,8.8){$\cdots$}
  \put(41,10){\line(1,0){8}}
  \put(51,10.5){\line(1,0){8}}
  \put(51,9.5){\line(1,0){8}}
  \put(-2,3){$\beta_1$}
  \put(8,3){$\beta_2$}
  \put(18,3){$\beta_3$}
  \put(48,3){$\beta_{\j-1}$}
  \put(58,3){$\beta_\j$}
  \qbezier(54.5,11.5)(57.5,10)(54.5,8.5)
  \end{picture}
\end{center}  
\item[Type $(A_{\j-1},C_\j)$] Let $\Phi_K$ and $\Phi_J$ be root systems of type $A_{\j-1}$ and $C_\j$, respectively. 
Set $K=\{\beta_1, \ldots, \beta_{\j-1}\}$ and $J=\{\beta_1, \ldots, \beta_\j\}$ and we fix an ordering of the simple roots in $\Phi_K$ and $\Phi_J$ as follows. 
\setlength{\unitlength}{1mm}
\begin{center}
  \begin{picture}(80, 15)(0,0)
  \put(0,10){\circle{2}}
  \put(10,10){\circle{2}}
  \put(20,10){\circle{2}}
  \put(50,10){\circle{2}}
  \put(60,10){\circle{2}}
  \put(1,10){\line(1,0){8}}
  \put(11,10){\line(1,0){8}}
  \put(21,10){\line(1,0){8}}
  \put(33,8.8){$\cdots$}
  \put(41,10){\line(1,0){8}}
  \put(51,10.5){\line(1,0){8}}
  \put(51,9.5){\line(1,0){8}}
  \put(-2,3){$\beta_1$}
  \put(8,3){$\beta_2$}
  \put(18,3){$\beta_3$}
  \put(48,3){$\beta_{\j-1}$}
  \put(58,3){$\beta_\j$}
  \qbezier(55.5,11.5)(52.5,10)(55.5,8.5)
  \end{picture}
\end{center}  
\item[Type $(C_{\j-1},C_\j)$] Let $\Phi_K$ and $\Phi_J$ be root systems of type $C_{\j-1}$ and $C_\j$, respectively. 
Set $K=\{\beta_2, \ldots, \beta_\j\}$ and $J=\{\beta_1, \ldots, \beta_\j\}$ and we fix an ordering of the simple roots in $\Phi_K$ and $\Phi_J$ as follows. 
\setlength{\unitlength}{1mm}
\begin{center}
  \begin{picture}(80, 15)(0,0)
  \put(0,10){\circle{2}}
  \put(10,10){\circle{2}}
  \put(20,10){\circle{2}}
  \put(50,10){\circle{2}}
  \put(60,10){\circle{2}}
  \put(1,10){\line(1,0){8}}
  \put(11,10){\line(1,0){8}}
  \put(21,10){\line(1,0){8}}
  \put(33,8.8){$\cdots$}
  \put(41,10){\line(1,0){8}}
  \put(51,10.5){\line(1,0){8}}
  \put(51,9.5){\line(1,0){8}}
  \put(-2,3){$\beta_1$}
  \put(8,3){$\beta_2$}
  \put(18,3){$\beta_3$}
  \put(48,3){$\beta_{\j-1}$}
  \put(58,3){$\beta_\j$}
  \qbezier(55.5,11.5)(52.5,10)(55.5,8.5)
  \end{picture}
\end{center}  
\item[Type $(A_{\j-1},D_\j)$] Let $\Phi_K$ and $\Phi_J$ be root systems of type $A_{\j-1}$ and $D_\j$, respectively. 
Set $K=\{\beta_1, \ldots, \beta_{\j-1}\}$ and $J=\{\beta_1, \ldots, \beta_\j\}$ and we fix an ordering of the simple roots in $\Phi_K$ and $\Phi_J$ as follows. 
\setlength{\unitlength}{1mm}
\begin{center}
  \begin{picture}(80, 17)(0,0)
  \put(0,10){\circle{2}}
  \put(10,10){\circle{2}}
  \put(20,10){\circle{2}}
  \put(50,10){\circle{2}}
  \put(60,4){\circle{2}}
  \put(60,16){\circle{2}}
  \put(1,10){\line(1,0){8}}
  \put(11,10){\line(1,0){8}}
  \put(21,10){\line(1,0){8}}
  \put(33,8.8){$\cdots$}
  \put(41,10){\line(1,0){8}}
  \put(51,10.5){\line(3,2){8}}
  \put(51,9.5){\line(3,-2){8}}
  \put(-2,3){$\beta_1$}
  \put(8,3){$\beta_2$}
  \put(18,3){$\beta_3$}
  \put(48,3){$\beta_{\j-2}$}
  \put(63,15){$\beta_{\j-1}$}
  \put(63,3){$\beta_\j$}
  \end{picture}
\end{center}  
\item[Type $(D_{\j-1},D_\j)$] Let $\Phi_K$ and $\Phi_J$ be root systems of type $D_{\j-1}$ and $D_\j$, respectively. 
Set $K=\{\beta_2, \ldots, \beta_\j\}$ and $J=\{\beta_1, \ldots, \beta_\j\}$ and we fix an ordering of the simple roots in $\Phi_K$ and $\Phi_J$ as follows. 
\setlength{\unitlength}{1mm}
\begin{center}
  \begin{picture}(80, 17)(0,0)
  \put(0,10){\circle{2}}
  \put(10,10){\circle{2}}
  \put(20,10){\circle{2}}
  \put(50,10){\circle{2}}
  \put(60,4){\circle{2}}
  \put(60,16){\circle{2}}
  \put(1,10){\line(1,0){8}}
  \put(11,10){\line(1,0){8}}
  \put(21,10){\line(1,0){8}}
  \put(33,8.8){$\cdots$}
  \put(41,10){\line(1,0){8}}
  \put(51,10.5){\line(3,2){8}}
  \put(51,9.5){\line(3,-2){8}}
  \put(-2,3){$\beta_1$}
  \put(8,3){$\beta_2$}
  \put(18,3){$\beta_3$}
  \put(48,3){$\beta_{\j-2}$}
  \put(63,15){$\beta_{\j-1}$}
  \put(63,3){$\beta_\j$}
  \end{picture}
\end{center}  
\item[Type $(A_5,E_6)$] Let $\Phi_K$ and $\Phi_J$ be root systems of type $A_5$ and $E_6$, respectively. 
Set $K=\{\beta_1, \ldots, \beta_5\}$ and $J=\{\beta_1, \ldots, \beta_6\}$ and we fix an ordering of the simple roots in $\Phi_K$ and $\Phi_J$ as follows. 
\setlength{\unitlength}{1mm}
\begin{center}
  \begin{picture}(80, 23)(0,0)
  \put(0,10){\circle{2}}
  \put(10,10){\circle{2}}
  \put(20,10){\circle{2}}
  \put(30,10){\circle{2}}
  \put(40,10){\circle{2}}
  \put(20,20){\circle{2}}
  \put(1,10){\line(1,0){8}}
  \put(11,10){\line(1,0){8}}
  \put(21,10){\line(1,0){8}}
  \put(31,10){\line(1,0){8}}
  \put(20,11){\line(0,1){8}}
  \put(-2,3){$\beta_1$}
  \put(8,3){$\beta_2$}
  \put(18,3){$\beta_3$}
  \put(28,3){$\beta_4$}
  \put(38,3){$\beta_5$}
  \put(22,19){$\beta_6$}
  \end{picture}
\end{center}  
\item[Type $(D_5,E_6)$] Let $\Phi_K$ and $\Phi_J$ be root systems of type $D_5$ and $E_6$, respectively. 
Set $K=\{\beta_1, \ldots, \beta_5\}$ and $J=\{\beta_1, \ldots, \beta_6\}$ and we fix an ordering of the simple roots in $\Phi_K$ and $\Phi_J$ as follows. 
\setlength{\unitlength}{1mm}
\begin{center}
  \begin{picture}(80, 23)(0,0)
  \put(0,10){\circle{2}}
  \put(10,10){\circle{2}}
  \put(20,10){\circle{2}}
  \put(30,10){\circle{2}}
  \put(40,10){\circle{2}}
  \put(20,20){\circle{2}}
  \put(1,10){\line(1,0){8}}
  \put(11,10){\line(1,0){8}}
  \put(21,10){\line(1,0){8}}
  \put(31,10){\line(1,0){8}}
  \put(20,11){\line(0,1){8}}
  \put(-2,3){$\beta_1$}
  \put(8,3){$\beta_3$}
  \put(18,3){$\beta_4$}
  \put(28,3){$\beta_5$}
  \put(38,3){$\beta_6$}
  \put(22,19){$\beta_2$}
  \end{picture}
\end{center}  
\item[Type $(A_6,E_7)$] Let $\Phi_K$ and $\Phi_J$ be root systems of type $A_6$ and $E_7$, respectively. 
Set $K=\{\beta_1, \ldots, \beta_6\}$ and $J=\{\beta_1, \ldots, \beta_7\}$ and we fix an ordering of the simple roots in $\Phi_K$ and $\Phi_J$ as follows. 
\setlength{\unitlength}{1mm}
\begin{center}
  \begin{picture}(80, 23)(0,0)
  \put(0,10){\circle{2}}
  \put(10,10){\circle{2}}
  \put(20,10){\circle{2}}
  \put(30,10){\circle{2}}
  \put(40,10){\circle{2}}
  \put(50,10){\circle{2}}
  \put(20,20){\circle{2}}
  \put(1,10){\line(1,0){8}}
  \put(11,10){\line(1,0){8}}
  \put(21,10){\line(1,0){8}}
  \put(31,10){\line(1,0){8}}
  \put(41,10){\line(1,0){8}}
  \put(20,11){\line(0,1){8}}
  \put(-2,3){$\beta_1$}
  \put(8,3){$\beta_2$}
  \put(18,3){$\beta_3$}
  \put(28,3){$\beta_4$}
  \put(38,3){$\beta_5$}
  \put(48,3){$\beta_6$}
  \put(22,19){$\beta_7$}
  \end{picture}
\end{center}  
\item[Type $(D_6,E_7)$] Let $\Phi_K$ and $\Phi_J$ be root systems of type $D_6$ and $E_7$, respectively. 
Set $K=\{\beta_2, \ldots, \beta_7\}$ and $J=\{\beta_1, \ldots, \beta_7\}$ and we fix an ordering of the simple roots in $\Phi_K$ and $\Phi_J$ as follows. 
\setlength{\unitlength}{1mm}
\begin{center}
  \begin{picture}(80, 23)(0,0)
  \put(0,10){\circle{2}}
  \put(10,10){\circle{2}}
  \put(20,10){\circle{2}}
  \put(30,10){\circle{2}}
  \put(40,10){\circle{2}}
  \put(50,10){\circle{2}}
  \put(20,20){\circle{2}}
  \put(1,10){\line(1,0){8}}
  \put(11,10){\line(1,0){8}}
  \put(21,10){\line(1,0){8}}
  \put(31,10){\line(1,0){8}}
  \put(41,10){\line(1,0){8}}
  \put(20,11){\line(0,1){8}}
  \put(-2,3){$\beta_1$}
  \put(8,3){$\beta_3$}
  \put(18,3){$\beta_4$}
  \put(28,3){$\beta_5$}
  \put(38,3){$\beta_6$}
  \put(48,3){$\beta_7$}
  \put(22,19){$\beta_2$}
  \end{picture}
\end{center}  
\item[Type $(E_6,E_7)$] Let $\Phi_K$ and $\Phi_J$ be root systems of type $E_6$ and $E_7$, respectively. 
Set $K=\{\beta_1, \ldots, \beta_6\}$ and $J=\{\beta_1, \ldots, \beta_7\}$ and we fix an ordering of the simple roots in $\Phi_K$ and $\Phi_J$ as follows. 
\setlength{\unitlength}{1mm}
\begin{center}
  \begin{picture}(80, 23)(0,0)
  \put(0,10){\circle{2}}
  \put(10,10){\circle{2}}
  \put(20,10){\circle{2}}
  \put(30,10){\circle{2}}
  \put(40,10){\circle{2}}
  \put(50,10){\circle{2}}
  \put(20,20){\circle{2}}
  \put(1,10){\line(1,0){8}}
  \put(11,10){\line(1,0){8}}
  \put(21,10){\line(1,0){8}}
  \put(31,10){\line(1,0){8}}
  \put(41,10){\line(1,0){8}}
  \put(20,11){\line(0,1){8}}
  \put(-2,3){$\beta_1$}
  \put(8,3){$\beta_3$}
  \put(18,3){$\beta_4$}
  \put(28,3){$\beta_5$}
  \put(38,3){$\beta_6$}
  \put(48,3){$\beta_7$}
  \put(22,19){$\beta_2$}
  \end{picture}
\end{center}  
\item[Type $(A_7,E_8)$] Let $\Phi_K$ and $\Phi_J$ be root systems of type $A_7$ and $E_8$, respectively. 
Set $K=\{\beta_1, \ldots, \beta_7\}$ and $J=\{\beta_1, \ldots, \beta_8\}$ and we fix an ordering of the simple roots in $\Phi_K$ and $\Phi_J$ as follows. 
\setlength{\unitlength}{1mm}
\begin{center}
  \begin{picture}(80, 23)(0,0)
  \put(0,10){\circle{2}}
  \put(10,10){\circle{2}}
  \put(20,10){\circle{2}}
  \put(30,10){\circle{2}}
  \put(40,10){\circle{2}}
  \put(50,10){\circle{2}}
  \put(60,10){\circle{2}}
  \put(20,20){\circle{2}}
  \put(1,10){\line(1,0){8}}
  \put(11,10){\line(1,0){8}}
  \put(21,10){\line(1,0){8}}
  \put(31,10){\line(1,0){8}}
  \put(41,10){\line(1,0){8}}
  \put(51,10){\line(1,0){8}}
  \put(20,11){\line(0,1){8}}
  \put(-2,3){$\beta_1$}
  \put(8,3){$\beta_2$}
  \put(18,3){$\beta_3$}
  \put(28,3){$\beta_4$}
  \put(38,3){$\beta_5$}
  \put(48,3){$\beta_6$}
  \put(58,3){$\beta_7$}
  \put(22,19){$\beta_8$}
  \end{picture}
\end{center}  
\item[Type $(D_7,E_8)$] Let $\Phi_K$ and $\Phi_J$ be root systems of type $D_7$ and $E_8$, respectively. 
Set $K=\{\beta_2, \ldots, \beta_8\}$ and $J=\{\beta_1, \ldots, \beta_8\}$ and we fix an ordering of the simple roots in $\Phi_K$ and $\Phi_J$ as follows. 
\setlength{\unitlength}{1mm}
\begin{center}
  \begin{picture}(80, 23)(0,0)
  \put(0,10){\circle{2}}
  \put(10,10){\circle{2}}
  \put(20,10){\circle{2}}
  \put(30,10){\circle{2}}
  \put(40,10){\circle{2}}
  \put(50,10){\circle{2}}
  \put(60,10){\circle{2}}
  \put(20,20){\circle{2}}
  \put(1,10){\line(1,0){8}}
  \put(11,10){\line(1,0){8}}
  \put(21,10){\line(1,0){8}}
  \put(31,10){\line(1,0){8}}
  \put(41,10){\line(1,0){8}}
  \put(51,10){\line(1,0){8}}
  \put(20,11){\line(0,1){8}}
  \put(-2,3){$\beta_1$}
  \put(8,3){$\beta_3$}
  \put(18,3){$\beta_4$}
  \put(28,3){$\beta_5$}
  \put(38,3){$\beta_6$}
  \put(48,3){$\beta_7$}
  \put(58,3){$\beta_8$}
  \put(22,19){$\beta_2$}
  \end{picture}
\end{center}  
\item[Type $(E_7,E_8)$] Let $\Phi_K$ and $\Phi_J$ be root systems of type $E_7$ and $E_8$, respectively. 
Set $K=\{\beta_1, \ldots, \beta_7\}$ and $J=\{\beta_1, \ldots, \beta_8\}$ and we fix an ordering of the simple roots in $\Phi_K$ and $\Phi_J$ as follows. 
\setlength{\unitlength}{1mm}
\begin{center}
  \begin{picture}(80, 23)(0,0)
  \put(0,10){\circle{2}}
  \put(10,10){\circle{2}}
  \put(20,10){\circle{2}}
  \put(30,10){\circle{2}}
  \put(40,10){\circle{2}}
  \put(50,10){\circle{2}}
  \put(60,10){\circle{2}}
  \put(20,20){\circle{2}}
  \put(1,10){\line(1,0){8}}
  \put(11,10){\line(1,0){8}}
  \put(21,10){\line(1,0){8}}
  \put(31,10){\line(1,0){8}}
  \put(41,10){\line(1,0){8}}
  \put(51,10){\line(1,0){8}}
  \put(20,11){\line(0,1){8}}
  \put(-2,3){$\beta_1$}
  \put(8,3){$\beta_3$}
  \put(18,3){$\beta_4$}
  \put(28,3){$\beta_5$}
  \put(38,3){$\beta_6$}
  \put(48,3){$\beta_7$}
  \put(58,3){$\beta_8$}
  \put(22,19){$\beta_2$}
  \end{picture}
\end{center}  
\item[Type $(B_3,F_4)$] Let $\Phi_K$ and $\Phi_J$ be root systems of type $B_3$ and $F_4$, respectively. 
Set $K=\{\beta_1, \beta_2, \beta_3\}$ and $J=\{\beta_1, \beta_2, \beta_3, \beta_4\}$ and we fix an ordering of the simple roots in $\Phi_K$ and $\Phi_J$ as follows. 
\setlength{\unitlength}{1mm}
\begin{center}
  \begin{picture}(60, 15)(0,0)
  \put(0,10){\circle{2}}
  \put(10,10){\circle{2}}
  \put(20,10){\circle{2}}
  \put(30,10){\circle{2}}
  \put(1,10){\line(1,0){8}}
  \put(11,10.5){\line(1,0){8}}
  \put(11,9.5){\line(1,0){8}}
  \put(21,10){\line(1,0){8}}
  \put(-2,3){$\beta_1$}
  \put(8,3){$\beta_2$}
  \put(18,3){$\beta_3$}
  \put(28,3){$\beta_4$}
  \qbezier(14.5,11.5)(17.5,10)(14.5,8.5)
  \end{picture}
\end{center}  
\item[Type $(C_3,F_4)$] Let $\Phi_K$ and $\Phi_J$ be root systems of type $C_3$ and $F_4$, respectively. 
Set $K=\{\beta_2, \beta_3, \beta_4\}$ and $J=\{\beta_1, \beta_2, \beta_3, \beta_4\}$ and we fix an ordering of the simple roots in $\Phi_K$ and $\Phi_J$ as follows. 
\setlength{\unitlength}{1mm}
\begin{center}
  \begin{picture}(60, 15)(0,0)
  \put(0,10){\circle{2}}
  \put(10,10){\circle{2}}
  \put(20,10){\circle{2}}
  \put(30,10){\circle{2}}
  \put(1,10){\line(1,0){8}}
  \put(11,10.5){\line(1,0){8}}
  \put(11,9.5){\line(1,0){8}}
  \put(21,10){\line(1,0){8}}
  \put(-2,3){$\beta_1$}
  \put(8,3){$\beta_2$}
  \put(18,3){$\beta_3$}
  \put(28,3){$\beta_4$}
  \qbezier(14.5,11.5)(17.5,10)(14.5,8.5)
  \end{picture}
\end{center}  
\item[Type $(A_1,G_2){\rm \mathchar`-(i)}$] Let $\Phi_K$ and $\Phi_J$ be root systems of type $A_1$ and $G_2$, respectively. 
Set $K=\{\beta_1\}$ and $J=\{\beta_1, \beta_2\}$ and we fix an ordering of the simple roots in $\Phi_K$ and $\Phi_J$ as follows. 
\setlength{\unitlength}{1mm}
\begin{center}
  \begin{picture}(40, 12)(0,0)
  \put(0,10){\circle{2}}
  \put(10,10){\circle{2}}
  \put(1,10){\line(1,0){8}}
  \put(1,10.7){\line(1,0){8}}
  \put(1,9.3){\line(1,0){8}}
  \put(-2,3){$\beta_1$}
  \put(8,3){$\beta_2$}
  \qbezier(5.5,11.5)(2.5,10)(5.5,8.5)
  \end{picture}
\end{center}  
\item[Type $(A_1,G_2){\rm \mathchar`-(ii)}$] Let $\Phi_K$ and $\Phi_J$ be root systems of type $A_1$ and $G_2$, respectively. 
Set $K=\{\beta_2\}$ and $J=\{\beta_1, \beta_2\}$ and we fix an ordering of the simple roots in $\Phi_K$ and $\Phi_J$ as follows. 
\setlength{\unitlength}{1mm}
\begin{center}
  \begin{picture}(40, 12)(0,0)
  \put(0,10){\circle{2}}
  \put(10,10){\circle{2}}
  \put(1,10){\line(1,0){8}}
  \put(1,10.7){\line(1,0){8}}
  \put(1,9.3){\line(1,0){8}}
  \put(-2,3){$\beta_1$}
  \put(8,3){$\beta_2$}
  \qbezier(5.5,11.5)(2.5,10)(5.5,8.5)
  \end{picture}
\end{center}  
\smallskip
\end{description}

In this setting we describe a list of values of $m_{i,K}^J$.
Recall that 
\begin{align*}
\j=|J|.
\end{align*}
For a positive integer $i$ with $\alpha_i \in K$, set
\begin{align*} 
\beta_{i'} \coloneqq \alpha_i.
\end{align*}

We will give a proof of the following lemma in Appendix~\ref{sect:computation for miKJ}.

\begin{lemma} \label{lemm:computationanytype2}
The rational number $m_{i,K}^J$ in \eqref{eq:miKJ} is given in Table~$\ref{tab:A list of values}$.
\end{lemma}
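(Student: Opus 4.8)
The plan is to compute $m_{i,K}^J$ directly from its definition \eqref{eq:miKJ} in each case, using Billey's formula as the computational engine. The first reduction is provided by Lemma~\ref{lemm:computationanytype1.5}: since $m_{i,K}^J$ depends only on the Dynkin types of the pair $(\Phi_K,\Phi_J)$ together with the marked vertex $\alpha_i \in K$, it suffices to fix one convenient root system realizing each pair listed above and carry out a single computation there. Unwinding \eqref{eq:miKJ}, we have $m_{i,K}^J = \frac{|\Red(v_J)|}{|\Red(v_K)|}\frac{1}{|J|}\,c_{i,K}^J$, in which the ratio $|\Red(v_J)|/|\Red(v_K)|$ is read off directly from Theorem~\ref{theorem:GiambelliPet} (for instance it is $1$ for the pairs among types $A$, $B$, $C$, equals $2$ for $(A_{r-1},D_r)$, and takes the values $3$ or $3/2$ for the $E$-type transitions), while $|J| = \rank(\Phi_J)$ is immediate. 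Thus the entire content of the lemma lies in evaluating the coefficient $c_{i,K}^J$ of \eqref{eq:ciKL}.

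By \eqref{eq:ciKL}, computing $c_{i,K}^J$ amounts to evaluating the four polynomials $p_{s_i}(w_J)$, $p_{s_i}(w_K)$, $p_{v_K}(w_J)$, and $p_{v_J}(w_J)$, each of which I would obtain from Billey's formula \eqref{eq:Billey}. Concretely, I would fix reduced words for the longest parabolic elements $w_K$ and $w_J$ and for the elements $v_K$ and $v_J$, track the roots $r(j,w) = s_{b_1}\cdots s_{b_{j-1}}(\alpha_{b_j})$ appearing along $w_J$, and then apply the specialization $\alpha_k \mapsto t$. The two degree-one quantities $p_{s_i}(w)$ collapse to the sum of heights of a prescribed family of positive roots, while the higher-degree $p_{v_K}(w_J)$ and $p_{v_J}(w_J)$ require summing over all reduced subwords of $v_K$, resp.\ $v_J$, inside the chosen reduced word for $w_J$. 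A useful bookkeeping check is that $c_{i,K}^J$ is homogeneous of degree $0$ in $t$: the factor $p_{s_i}(w_J)-p_{s_i}(w_K)$ has degree $1$ and the ratio $p_{v_K}(w_J)/p_{v_J}(w_J)$ has degree $-1$, so the powers of $t$ cancel and a genuine rational number results.

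With this machinery in place, the proof splits along the classification of the pairs $(\Phi_K,\Phi_J)$ used in Table~\ref{tab:A list of values}. For the infinite families $(A_{r-1},A_r)$, $(A_{r-1},B_r)$, $(B_{r-1},B_r)$, $(A_{r-1},C_r)$, $(C_{r-1},C_r)$, $(A_{r-1},D_r)$, and $(D_{r-1},D_r)$, I would carry out the Billey computation with the rank $r$ and the marked position $i'$ (where $\beta_{i'} = \alpha_i$) kept as parameters, producing closed-form rational expressions. The type-$A$ case should reproduce exactly the coefficients $\frac{b-i+1}{b-a+2}$ and $\frac{i-a+1}{b-a+2}$ of Lemma~\ref{lemm:TypeA}, which serves as a consistency check on the normalizations in \eqref{eq:miKJ}. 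For the finitely many exceptional pairs $(A_5,E_6)$, $(D_5,E_6)$, $(A_6,E_7)$, $(D_6,E_7)$, $(E_6,E_7)$, $(A_7,E_8)$, $(D_7,E_8)$, $(E_7,E_8)$, $(B_3,F_4)$, $(C_3,F_4)$, and the two $(A_1,G_2)$ cases, the evaluation is a finite calculation with explicit root data.

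The step I expect to be the main obstacle is the Billey evaluation of the top-degree polynomials $p_{v_K}(w_J)$ and $p_{v_J}(w_J)$: these demand control of all reduced subwords of the relevant elements inside a reduced expression for the longest parabolic element, together with the iterated reflections $r(j,w)$ needed to identify the contributing roots. For the classical families this is manageable but requires isolating the correct inductive pattern in $r$, whereas for the exceptional types $E_6, E_7, E_8, F_4$ the cleanest route is to verify the values against explicit root-system data (by hand or with computer algebra), after which Lemma~\ref{lemm:computationanytype1.5} guarantees that the single computation settles the value of $m_{i,K}^J$ for every occurrence of that type pair.
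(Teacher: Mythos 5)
Your proposal is sound, but it is a genuinely different route from the one the paper takes. In fact, the paper's Appendix~\ref{sect:computation for miKJ} opens by acknowledging exactly your strategy --- ``by using Theorem~\ref{theorem:MonkPet} one can compute $m_{i,K}^J$'' --- and then deliberately avoids it. The paper instead invokes the presentation $H^*(\Pet_\Phi)\cong \Sym\Lambda_\R/(\alpha_i\varpi_i \mid 1\le i\le n)$ (Theorem~\ref{theorem:presentation_Pet}, from [HHM]): writing each simple root in terms of fundamental weights via the Cartan matrix, the quadratic relations $\alpha_i\varpi_i=0$ let one expand $\varpi_i\cdot\prod_{\alpha_k\in K}\varpi_k$ into square-free monomials directly, by induction on the rank for the classical families (Lemmas~\ref{lemm:TypeB-1}--\ref{lemm:TypeD-2}) and by short eliminations for the exceptional types (Lemmas~\ref{lemm:TypeG}--\ref{lemm:TypeE8}); the coefficients appearing there are the $m_{i,K}^J$ by Lemma~\ref{lemm:computationanytype1}. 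Your route, via $c_{i,K}^J=\bigl(p_{s_i}(w_J)-p_{s_i}(w_K)\bigr)\,p_{v_K}(w_J)/p_{v_J}(w_J)$ and Billey's formula, is legitimate and your degree-zero bookkeeping and use of Lemma~\ref{lemm:computationanytype1.5} to reduce to one representative per Dynkin pair are both correct; its cost is that the top-degree evaluations $p_{v_K}(w_J)$ and $p_{v_J}(w_J)$ require enumerating reduced subwords inside a reduced expression of the longest parabolic element $w_J$ (of length $120$ for $E_8$), which is a much heavier computation than the paper's linear-algebraic eliminations, and in the exceptional cases you would essentially be forced to the computer verification you describe. What your approach buys in exchange is independence from the presentation theorem of [HHM] and automatic nonnegativity of the coefficients, since Drellich's $c_{i,K}^J$ are nonnegative by construction. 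One small inaccuracy worth flagging: the ratio $|\Red(v_J)|/|\Red(v_K)|$ is not ``$3$ or $3/2$'' for all $E$-type transitions --- it equals $1$ for $(E_6,E_7)$ and $(E_7,E_8)$ --- though since you offered those values only as examples this does not affect the argument.
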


\begin{table}[htb]
  \begin{tabular}{|c|c|c|} \hline
    Type $(\Phi_K,\Phi_J)$ & Ambient root system $\Phi$ & Value $m_{i,K}^J$ \\ \hline \hline
    $(A_{\j-1},A_\j)$ & $A,B,C,D,E,F$ & $\frac{i'}{\j}$ \\ \hline
    $(A_{\j-1},B_\j)$ & $B,F$ & $\frac{2i'}{\j}$ \\ \hline
    $(B_{\j-1},B_\j)$ & $B,F$ & $1$ \ if $\beta_{i'} \neq \beta_\j$ \\ 
                       &       & $\frac{1}{2}$ \ if $\beta_{i'} = \beta_\j$ \\ \hline
    $(A_{\j-1},C_\j)$ & $C,F$ & $\frac{i'}{\j}$ \\ \hline
    $(C_{\j-1},C_\j)$ & $C,F$ & $1$ \\ \hline
    $(A_{\j-1},D_\j)$ & $D,E$ & $\frac{2i'}{\j}$ \ if $\beta_{i'} \neq \beta_\j$ \ \ \ \\ 
                       &          & $\frac{\j-2}{\j}$ \ if $\beta_{i'} = \beta_{\j-1}$ \\ \hline
    $(D_{\j-1},D_\j)$ & $D,E$ & $1$ \ if $\beta_{i'} \neq \beta_{\j-1}, \beta_\j$ \\ 
                       &          & $\frac{1}{2}$ \ if $\beta_{i'} = \beta_{\j-1}, \beta_\j$ \\ \hline
    $(A_5,E_6)$ & $E$ &  $\frac{i'}{2}$ if $\beta_{i'} = \beta_1, \beta_2, \beta_3$ \ \ \ \\ 
                       &          & $\frac{6-i'}{2}$ \ if $\beta_{i'} = \beta_4, \beta_5$ \\ \hline
    $(D_5,E_6)$ & $E$ & $\frac{i'+1}{4}$ \ if $\beta_{i'} = \beta_1, \beta_2, \beta_3$ \\ 
                       &          & $\frac{10-i'}{4}$ \ if $\beta_{i'} = \beta_4, \beta_5$ \\ \hline
    $(A_6,E_7)$ & $E$ & $\frac{4i'}{7}$ \ if $\beta_{i'} = \beta_1, \beta_2, \beta_3$ \\ 
                    &       & $\frac{3(7-i')}{7}$ \ if $\beta_{i'} = \beta_4, \beta_5, \beta_6$ \\ \hline
    $(D_6,E_7)$ & $E$ & $\frac{i'}{2}$ \ if $\beta_{i'} = \beta_2, \beta_3$ \\ 
                       &    & $\frac{8-i'}{2}$ \ if $\beta_{i'} = \beta_4, \beta_5, \beta_6, \beta_7$ \\ \hline
    $(E_6,E_7)$ & $E$ & $\frac{i'+1}{3}$ \ if $\beta_{i'} = \beta_1, \beta_2, \beta_3$ \\ 
                       &    & $\frac{10-i'}{3}$ \ if $\beta_{i'} = \beta_4, \beta_5, \beta_6$ \\ \hline
    $(A_7,E_8)$ & $E$ & $\frac{5i'}{8}$ \ if $\beta_{i'} = \beta_1, \beta_2, \beta_3$ \\ 
                    &       & $\frac{3(8-i')}{8}$ \ if $\beta_{i'} = \beta_4, \beta_5, \beta_6, \beta_7$ \\ \hline
    $(D_7,E_8)$ & $E$ & $\frac{2i'+1}{4}$ \ if $\beta_{i'} = \beta_2, \beta_3$ \\ 
                       &    & $\frac{9-i'}{2}$ \ if $\beta_{i'} = \beta_4, \beta_5, \beta_6, \beta_7, \beta_8$ \\ \hline 
    $(E_7,E_8)$ & $E$ & $\frac{i'+1}{2}$ \ if $\beta_{i'} = \beta_1, \beta_2, \beta_3$ \\ 
                       &    & $\frac{10-i'}{2}$ \ if $\beta_{i'} = \beta_4, \beta_5, \beta_6,\beta_7$ \\ \hline
    $(B_3,F_4)$ & $F$ & $i'$ if $\beta_{i'} = \beta_1, \beta_2$ \\ 
                       &    & $\frac{3}{2}$ \ if $\beta_{i'} = \beta_3$ \\ \hline
    $(C_3,F_4)$ & $F$ & $\frac{5-i'}{2}$ \\ \hline
    $(A_1,G_2){\rm \mathchar`-(i)}$ & $G$ & $\frac{1}{2}$ \ \ \ \\ \hline
    $(A_1,G_2){\rm \mathchar`-(ii)}$ & $G$ & $\frac{3}{2}$ \\ \hline
  \end{tabular}
  \caption{A list of values of $m_{i,K}^J$.}
\label{tab:A list of values}
\end{table}

\begin{example} \label{ex:m_iKJ type A}
Consider the case of type $A_{n-1}$.
Let $K=\{\alpha_k \in \Sigma \mid \a \leq k \leq \b \}$ and fix $i$ with $\a \leq i \leq \b$.
Then, we have 
\begin{align*}
\varpi_i \cdot \prod_{\alpha_k \in K} \varpi_{k} = m_{i,K}^{J_1} \prod_{\alpha_j \in J_1} \varpi_{j} + m_{i,K}^{J_2} \prod_{\alpha_j \in J_2} \varpi_{j}. 
\end{align*}
by Lemma~\ref{lemm:computationanytype1} where $J_1=\{\alpha_k \in \Sigma \mid \a-1 \leq k \leq \b \}$ and $J_2=\{\alpha_k \in \Sigma \mid \a \leq k \leq \b+1 \}$.
Note that we take the convention $\prod_{\alpha_j \in J_1} \varpi_{j}=0$ whenever $\a=1$ and $\prod_{\alpha_j \in J_2} \varpi_{j}=0$ whenever $\b=n-1$.
We compute the coefficients $m_{i,K}^{J_1}$ and $m_{i,K}^{J_2}$ by using Table~\ref{tab:A list of values}.
Notice that $(\Phi_K, \Phi_{J_1})$ is type $(A_{\j-1},A_{\j})$ with $\beta_j=\alpha_{\b-j+1}$ for $1 \leq j \leq \j$ and $\j=\b-\a+2$.
Then we have $\alpha_i=\beta_{i'}=\alpha_{\b-i'+1}$ and hence $i'=\b-i+1$.
It follows from Table~\ref{tab:A list of values} that $m_{i,K}^{J_1}=\frac{\b-i+1}{\b-\a+2}$.
On the other hand, $(\Phi_K, \Phi_{J_2})$ is type $(A_{\j-1},A_{\j})$ with $\beta_j=\alpha_{j+\a-1}$ for $1 \leq j \leq \j$ and $\j=\b-\a+2$.
Note that $\alpha_i=\beta_{i'}=\alpha_{i'+\a-1}$, namely $i'=i-\a+1$.
By using Table~\ref{tab:A list of values}, we have $m_{i,K}^{J_2}=\frac{i-\a+1}{\b-\a+2}$.
Therefore, we conclude that 
\begin{align*}
\varpi_i \cdot \big( \varpi_\a \varpi_{\a+1} \cdots \varpi_{\b} \big) =  \frac{\b-i+1}{\b-\a+2}  \varpi_{\a-1} \varpi_\a \cdots \varpi_{\b} + \frac{i-\a+1}{\b-\a+2} \varpi_\a \varpi_{\a+1} \cdots \varpi_{\b+1}. 
\end{align*}
This is nothing but Lemma~\ref{lemm:TypeA}.
\end{example}

\begin{example} \label{ex:m_iKJ type B}
Consider the case of type $B_n$.
Let $K=\{\alpha_k \in \Sigma \mid \a \leq k \leq \b \}$ and fix $i$ with $\a \leq i \leq \b$.
Then, we have 
\begin{align} \label{eq:m_iKJ type B}
\varpi_i \cdot \prod_{\alpha_k \in K} \varpi_{k} = m_{i,K}^{J_1} \prod_{\alpha_j \in J_1} \varpi_{j} + m_{i,K}^{J_2} \prod_{\alpha_j \in J_2} \varpi_{j}. 
\end{align}
by Lemma~\ref{lemm:computationanytype1} where $J_1=\{\alpha_k \in \Sigma \mid \a-1 \leq k \leq \b \}$ and $J_2=\{\alpha_k \in \Sigma \mid \a \leq k \leq \b+1 \}$.
We take the convention again $\prod_{\alpha_j \in J_1} \varpi_{j}=0$ whenever $\a=1$ and $\prod_{\alpha_j \in J_2} \varpi_{j}=0$ whenever $\b=n$.
We compute the coefficients $m_{i,K}^{J_1}$ and $m_{i,K}^{J_2}$ by using Table~\ref{tab:A list of values}.
For this purpose we take cases.\\
\textbf{Case(i):} Suppose that $\b \leq n-2$. 
Then both of types for $(\Phi_K, \Phi_{J_1})$ and $(\Phi_K, \Phi_{J_2})$ are type $(A_{\j-1},A_{\j})$. 
Since $m_{i,K}^J$ depends only on the Dynkin diagrams of $(\Phi_K,\Phi_J)$ and the vertex $\alpha_i$ by Lemma~\ref{lemm:computationanytype1.5}, we have $m_{i,K}^{J_1}=\frac{\b-i+1}{\b-\a+2}$ and $m_{i,K}^{J_2}=\frac{i-\a+1}{\b-\a+2}$ from Example~\ref{ex:m_iKJ type A}.
In other words, one obtains
\begin{align*}
\varpi_i \cdot \big( \varpi_\a \varpi_{\a+1} \cdots \varpi_{\b} \big) =  \frac{\b-i+1}{\b-\a+2}  \varpi_{\a-1} \varpi_\a \cdots \varpi_{\b} + \frac{i-\a+1}{\b-\a+2} \varpi_\a \varpi_{\a+1} \cdots \varpi_{\b+1}. 
\end{align*}
\textbf{Case(ii):} Suppose that $\b=n-1$. Then $(\Phi_K, \Phi_{J_1})$ is type $(A_{\j-1},A_{\j})$, so one has $m_{i,K}^{J_1}=\frac{n-i}{n-\a+1}$.
On the other hand, $(\Phi_K, \Phi_{J_2})$ is type $(A_{\j-1},B_{\j})$ with $\beta_j=\alpha_{j+\a-1}$ for $1 \leq j \leq \j$ and $\j=n-\a+1$.
Since $\alpha_i=\beta_{i'}=\alpha_{i'+\a-1}$, namely $i'=i-\a+1$, we have $m_{i,K}^{J_2}=\frac{2(i-\a+1)}{n-\a+1}$ by Table~\ref{tab:A list of values}.
Hence, we obtain 
\begin{align*}
\varpi_i \cdot \big( \varpi_\a \varpi_{\a+1} \cdots \varpi_{n-1} \big) =  \frac{n-i}{n-\a+1}  \varpi_{\a-1} \varpi_\a \cdots \varpi_{n-1} + \frac{2(i-\a+1)}{n-\a+1} \varpi_\a \varpi_{\a+1} \cdots \varpi_{n}. 
\end{align*}
This equality will be proved in Appendix~\ref{sect:computation for miKJ} (Lemma~\ref{lemm:TypeB-1}). \\
\textbf{Case(iii):} Suppose that $\b=n$. Then, we first note that the second term of the right hand side in \eqref{eq:m_iKJ type B} does not appear, so we compute only $m_{i,K}^{J_1}$.
Noticing that $(\Phi_K, \Phi_{J_1})$ is type $(B_{\j-1},B_{\j})$ with $\beta_j=\alpha_{j+\a-2}$ for $1 \leq j \leq \j$ and $\j=n-\a+2$, we have $\alpha_i=\beta_{i'}=\alpha_{i'+\a-2}$ and $i'=i-\a+2$.
This implies that $m_{i,K}^{J_1}=1$ if $i < n$ and $m_{n,K}^{J_1}=\frac{1}{2}$ from Table~\ref{tab:A list of values}.
Thus, we have  
\begin{align*}
\varpi_i \cdot \big( \varpi_\a \varpi_{\a+1} \cdots \varpi_{n} \big) &=  \varpi_{\a-1} \varpi_\a \cdots \varpi_{n} \ \ \ \ {\rm if} \ a \leq i<n, \\
\varpi_n \cdot \big( \varpi_\a \varpi_{\a+1} \cdots \varpi_{n} \big) &=  \frac{1}{2} \varpi_{\a-1} \varpi_\a \cdots \varpi_{n}, 
\end{align*}
in this case (see Lemma~\ref{lemm:TypeB-2}).
Analogous formulas for other types will be described in Appendix~\ref{sect:computation for miKJ}.
\end{example}

\begin{remark} \label{rem:Peterson Schubert calculus}
Recall that Peterson Schubert calculus is to calculate the coefficients $c_{IK}^J$ in \eqref{eq:Peterson_Schubert_calculus}. 
By using Lemmas~\ref{lemm:computationanytype1} and \ref{lemm:computationanytype2}, we can derive an efficient computation for Peterson Schubert calculus.
In fact, if we decompose each subset $I\subset \Sigma$ and $K\subset \Sigma$ into connected components $I = I_1 \sqcup \cdots \sqcup I_c$ and $K = K_1 \sqcup \cdots \sqcup K_d$ respectively, then we have $p_{v_I}=\prod_{j=1}^c p_{v_{I_j}}$ and $p_{v_K}=\prod_{m=1}^d p_{v_{K_m}}$ by \cite[Theorem~5.3]{Dre2}.
Hence, it then follows from Theorem~\ref{theorem:GiambelliPet} and \eqref{eq:PetersonSchubert-varpi} that 
\begin{align*} 
p_{v_I} \cdot p_{v_K} &= \prod_{j=1}^c p_{v_{I_j}} \cdot \prod_{m=1}^d p_{v_{K_m}} = \prod_{j=1}^c \frac{|\Reduce(v_{I_j})|}{|{I_j}|!} \left( \prod_{\alpha_i \in I_j} \varpi_i \right) \cdot \prod_{m=1}^d \frac{|\Reduce(v_{K_j})|}{|{K_j}|!} \left( \prod_{\alpha_k \in K_m} \varpi_k \right) \\
&=\frac{|\Reduce(v_{I_1})| \cdots |\Reduce(v_{I_c})|}{|{I_1}|! \cdots |{I_c}|!} \frac{|\Reduce(v_{K_1})| \cdots |\Reduce(v_{K_d})|}{|{K_1}|! \cdots |{K_d}|!} \left( \prod_{\alpha_i \in I} \varpi_i \right) \cdot \left( \prod_{\alpha_k \in K} \varpi_k \right) \\
&=\frac{|\Reduce(v_{I_1})| \cdots |\Reduce(v_{I_c})|}{|{I_1}|! \cdots |{I_c}|!} \frac{|\Reduce(v_{K_1})| \cdots |\Reduce(v_{K_d})|}{|{K_1}|! \cdots |{K_d}|!} \left( \prod_{\alpha_p \in I \cap K} \varpi_p \right) \cdot \left( \prod_{\alpha_q \in I \cup K} \varpi_q \right). 
\end{align*}
Here, by using Lemma~\ref{lemm:computationanytype1} repeatedly, the product $\left( \prod_{\alpha_p \in I \cap K} \varpi_p \right) \cdot \left( \prod_{\alpha_q \in I \cup K} \varpi_q \right)$ can be expanded by square-free monomials in $\varpi_1, \ldots, \varpi_n$, as in Example~\ref{ex:calculus_TypeA}.
The expansion is explicit by Lemma~\ref{lemm:computationanytype2}.
Namely, the product $p_{v_I} \cdot p_{v_K}$ can be explicitly expanded by square-free monomials in $\varpi_1, \ldots, \varpi_n$.
By \cite[Theorem~5.3]{Dre2}, Theorem~\ref{theorem:GiambelliPet}, and \eqref{eq:PetersonSchubert-varpi} again, we derive an explicit formula for \eqref{eq:Peterson_Schubert_calculus}.
In particular, one can see that all coefficients $c_{IK}^J$ in \eqref{eq:Peterson_Schubert_calculus} are non-negative. 
\end{remark}

We now summarize our computation of the mixed $\Phi$-Eulerian numbers.
Recall that $\varpi_i$ is defined in \eqref{eq:varpianytypePet}.

\begin{theorem} \label{theorem:main_any_type}
Let $\Phi$ be an irreducible root system.
Let $c_1,\ldots, c_n$ be non-negative integers with $c_1 +\cdots+c_n = n$.
\begin{enumerate}
\item The mixed $\Phi$-Eulerian number $A^{\Phi}_{c_1,\ldots,c_n}$ is equal to
\begin{align*} 
A^{\Phi}_{c_1,\ldots,c_n} = \int_{\Pet_{\Phi}} \varpi_1^{c_1}\varpi_2^{c_2}\cdots\varpi_n^{c_n}. 
\end{align*}
\item If $K \subset \Sigma$ is connected and $K \ni \alpha_i$, then we have 
\begin{align*}
\varpi_i \cdot \prod_{\alpha_k \in K} \varpi_{k} = \sum_{J \subset \Sigma: \, {\rm connected} \atop J \supset K \, {\rm and } \, |J|=|K|+1} m_{i,K}^J \prod_{\alpha_j \in J} \varpi_{j}. 
\end{align*}
Here, $m_{i,K}^J$ depends only on the Dynkin diagrams of $(\Phi_K,\Phi_J)$ and the vertex $\alpha_i$ of the Dynkin diagram of $\Phi_K$.
The coefficients $m_{i,K}^J$ are given in Table~$\ref{tab:A list of values}$.
\item We have
\begin{align*} 
\int_{\Pet_{\Phi}} \varpi_1\varpi_2\cdots\varpi_n = \frac{|W|}{\det (C_{\Phi})}, 
\end{align*}
where $W$ is the Weyl group and $C_{\Phi}$ is the associated Cartan matrix.
$($See Table~$\ref{tab:A list of values mPhi}$ in Section~$\ref{sect:Any_Lie_types}$ for the explicit values.$)$
\end{enumerate}
\end{theorem}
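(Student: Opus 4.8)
The plan is to assemble the three assertions from results already in hand, since each one transfers an identity from the toric variety $X_{\Phi}$ to the Peterson variety $\Pet_{\Phi}$ and then invokes the Peterson Schubert calculus of this section. I would prove the parts in the order (1), (2), (3), noting that (3) depends on (1) and that part (1) carries essentially all the geometric content.

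For part (1), I would start from Proposition~\ref{proposition:mixed Eulerian number}, which already gives
\[
A^{\Phi}_{c_1,\ldots,c_n} = \int_{X_{\Phi}} \varpi_1^{c_1}\varpi_2^{c_2}\cdots\varpi_n^{c_n}.
\]
Thus it suffices to prove that, for every monomial, the integrals $\int_{X_{\Phi}}$ and $\int_{\Pet_{\Phi}}$ of $\varpi_1^{c_1}\cdots\varpi_n^{c_n}$ agree. In type $A$ this is exactly the step behind Proposition~\ref{prop:mixed Eulerian numberTypeA Pet}, where the coincidence of the Poincar\'e duals of $\Perm_n$ and $\Pet_n$ in $H^*(\Flag(\C^n))$ (from \cite{ADGH}) forces the two integrals to match. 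For arbitrary $\Phi$ I would instead invoke the identification of the images of the restriction maps $H^*(G/B)\to H^*(X_{\Phi})$ and $H^*(G/B)\to H^*(\Pet_{\Phi})$ furnished by \cite{AHMMS}, which is the graded ring isomorphism underlying Remark~\ref{remark:GiambelliPetGeneralization}; it sends $\varpi_i$ to $\varpi_i$ and is compatible with the degree-$2n$ integration maps on the two $n$-dimensional varieties, so the integrals coincide for every monomial in the $\varpi_i$. I expect this compatibility of the fundamental-class pairings to be the only genuinely nontrivial point of the whole proof, generalizing the \cite{ADGH} input used in type $A$.

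For part (2), the statement is precisely the combination of three lemmas already established. Lemma~\ref{lemm:computationanytype1} supplies the expansion $\varpi_i\cdot\prod_{\alpha_k\in K}\varpi_k = \sum_{J} m_{i,K}^J\prod_{\alpha_j\in J}\varpi_j$, the sum running over connected $J\supset K$ with $|J|=|K|+1$; Lemma~\ref{lemm:computationanytype1.5} shows that $m_{i,K}^J$ depends only on the pair of Dynkin types $(\Phi_K,\Phi_J)$ together with the distinguished vertex $\alpha_i$; and Lemma~\ref{lemm:computationanytype2} records the resulting values in Table~\ref{tab:A list of values}. Here I would simply cite these three lemmas in sequence.

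For part (3), I would specialize part (1) to $c_1=\cdots=c_n=1$, which yields
\[
\int_{\Pet_{\Phi}}\varpi_1\varpi_2\cdots\varpi_n = A^{\Phi}_{1,\ldots,1},
\]
and then combine this with Proposition~\ref{prop:Top_Class}, equivalently with the evaluation $A^{\Phi}_{1,\ldots,1}=|W|/\det(C_{\Phi})$ recorded there from \cite{Cro}, to conclude that $\int_{\Pet_{\Phi}}\varpi_1\varpi_2\cdots\varpi_n = |W|/\det(C_{\Phi})$. The explicit closed-form values are then read off from Table~\ref{tab:A list of values mPhi}.
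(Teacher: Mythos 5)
Your handling of parts (2) and (3) coincides with the paper's: (2) is exactly the concatenation of Lemmas~\ref{lemm:computationanytype1}, \ref{lemm:computationanytype1.5}, and \ref{lemm:computationanytype2}, and (3) is part (1) specialized to $c_1=\cdots=c_n=1$ combined with Proposition~\ref{prop:Top_Class}. Your reduction of part (1) to the equality $\int_{X_\Phi}\varpi_1^{c_1}\cdots\varpi_n^{c_n}=\int_{\Pet_\Phi}\varpi_1^{c_1}\cdots\varpi_n^{c_n}$ is also the paper's reduction. The problem is how you propose to justify that equality.

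You invoke the identification of the images of the restriction maps $H^*(G/B)\to H^*(X_\Phi)$ and $H^*(G/B)\to H^*(\Pet_\Phi)$ coming from \cite{AHMMS} (the isomorphism behind Remark~\ref{remark:GiambelliPetGeneralization}) and assert that it ``is compatible with the degree-$2n$ integration maps.'' That compatibility is not contained in the cited results, and it is not automatic: a graded ring isomorphism between two Poincar\'e duality algebras with $H^{2n}\cong\R$ determines the top-degree evaluation only up to a nonzero scalar, so the isomorphism $\varpi_i\mapsto\varpi_i$ by itself says nothing about whether the two integrals agree. Pinning down that scalar is equivalent to verifying a single nonzero evaluation, for instance $\int_{\Pet_\Phi}\varpi_1\cdots\varpi_n=\int_{X_\Phi}\varpi_1\cdots\varpi_n$ --- but by Proposition~\ref{prop:Top_Class} this is precisely statement (3), which in your scheme is deduced from (1). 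So the argument is circular exactly at the point you yourself identify as the only nontrivial one. The paper avoids this by citing \cite[Corollary 3.9]{AFZ}, the all-Lie-types analogue of the \cite{ADGH} fact you use in type $A$: the Poincar\'e duals (Gysin images) of $X_\Phi$ and $\Pet_\Phi$ in $H^*(G/B)$ coincide. Since that is an identity of classes on the ambient flag variety, it gives $\int_{X_\Phi}\alpha|_{X_\Phi}=\int_{G/B}\alpha\cup[X_\Phi]=\int_{G/B}\alpha\cup[\Pet_\Phi]=\int_{\Pet_\Phi}\alpha|_{\Pet_\Phi}$ for every $\alpha\in H^{2n}(G/B)$, and as each $\varpi_i$ on both subvarieties is restricted from $G/B$, the required equality of integrals follows for all monomials. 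To repair your proof, replace the appeal to the ring isomorphism of \cite{AHMMS} by this Poincar\'e-dual statement, or else supply an independent computation of $\int_{\Pet_\Phi}\varpi_1\cdots\varpi_n$ that does not pass through part (1).
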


\begin{proof}
By \cite[Corollary 3.9]{AFZ} we obtain
\begin{align*}
\int_{X_{\Phi}} \varpi_1^{c_1}\varpi_2^{c_2}\cdots\varpi_n^{c_n}=\int_{\Pet_{\Phi}} \varpi_1^{c_1}\varpi_2^{c_2}\cdots\varpi_n^{c_n}
\end{align*}
for non-negative integers $c_1,\ldots, c_n$ with $c_1 +\cdots+c_n = n$.
Then, the claims~(1) and (3) follow from Propositions~\ref{proposition:mixed Eulerian number} and \ref{prop:Top_Class}. 

The claim~(2) is nothing but Lemmas~\ref{lemm:computationanytype1}, \ref{lemm:computationanytype1.5}, and \ref{lemm:computationanytype2}.
\end{proof}

Theorem~\ref{theorem:main_any_type} yields a simple computation for the mixed $\Phi$-Eulerian numbers, as discussed in Section~\ref{sect:simple_computation_TypeA}.

\begin{example}
When $\Phi$ is of type $B_n$, we compute the mixed $\Phi$-Eulerian number $A^{\Phi_{B_n}}_{k,0,\ldots,0,n-k}$. 
By Theorem~\ref{theorem:main_any_type}, we have 
\begin{align*} 
A^{\Phi_{B_n}}_{k,0,\ldots,0,n-k} = \int_{\Pet_{\Phi_{B_n}}} \varpi_1^{k} \cdot \varpi_n^{n-k}. 
\end{align*}
By the formula in Case(i) of Example~\ref{ex:m_iKJ type B}, $\varpi_1^{k}$ can be inductively computed as 
\begin{align*} 
\varpi_1^{k} &= \varpi_1^{2} \cdot \varpi_1^{k-2} = \left(\frac{1}{2}\varpi_1\varpi_2 \right) \cdot \varpi_1^{k-2} \\
&= \left(\frac{1}{2}\varpi_1^2\varpi_2 \right) \cdot \varpi_1^{k-3}= \left(\frac{1}{2}\cdot\frac{1}{3}\varpi_1\varpi_2\varpi_3 \right) \cdot \varpi_1^{k-3} \\
&= \cdots = \frac{1}{k!} \varpi_1\varpi_2 \cdots \varpi_{k}. 
\end{align*}
On the other hand, using the formula in Case(iii) of Example~\ref{ex:m_iKJ type B},
we can also calculate $\varpi_n^{n-k}$ by
\begin{align*} 
\varpi_n^{n-k} &= \varpi_n^{2} \cdot \varpi_n^{n-k-2} = \left(\frac{1}{2}\varpi_{n-1}\varpi_n \right) \cdot \varpi_n^{n-k-2} \\
&= \left(\frac{1}{2}\varpi_{n-1}\varpi_n^2 \right) \cdot \varpi_n^{n-k-3}= \left(\frac{1}{2}\cdot\frac{1}{2}\varpi_{n-2}\varpi_{n-1}\varpi_n \right) \cdot \varpi_n^{n-k-3} \\ 
&=\cdots=\frac{1}{2^{n-k-1}} \varpi_{k+1}\varpi_{k+2} \cdots \varpi_{n}. 
\end{align*}
Therefore, we obtain
\begin{align*} 
A^{\Phi_{B_n}}_{k,0,\ldots,0,n-k} &= \frac{1}{k!} \cdot \frac{1}{2^{n-k-1}} \int_{\Pet_{\Phi_{B_n}}} \varpi_1\varpi_2 \cdots \varpi_n \\
&= \frac{1}{k!} \cdot \frac{1}{2^{n-k-1}} \cdot 2^{n-1} \cdot n! \\
&= \dbinom{n}{k} \cdot (n-k)! \cdot 2^{k}.
\end{align*}
This is an analogue of \cite[Theorem~16.3 (8)]{Pos}.
\end{example}

\begin{example}
When $\Phi$ is of type $E_6$, we compute the mixed $\Phi$-Eulerian number $A^{\Phi_{E_6}}_{0,1,0,2,3,0}$. We obtain  
\begin{align*} 
A^{\Phi_{E_6}}_{0,1,0,2,3,0} = \int_{\Pet_{\Phi_{E_6}}} \varpi_2 \cdot \varpi_4^2 \cdot \varpi_5^3
\end{align*}
from Theorem~\ref{theorem:main_any_type}~(1). 
Noting that $\{\alpha_2, \alpha_4, \alpha_5 \}$ is connected whose own Lie type is $A_3$ in $\Phi_{E_6}$ as shown in Figure~\ref{pic: Dynkin diagrams}, we have 
\begin{align*}
\varpi_4 \cdot (\varpi_2 \varpi_4\varpi_5) = \varpi_2\varpi_3\varpi_4\varpi_5 + \frac{1}{2}\varpi_2 \varpi_4\varpi_5\varpi_6
\end{align*}
by Theorem~\ref{theorem:main_any_type}~(2) since $\{\alpha_2, \alpha_3, \alpha_4, \alpha_5 \}$ is of type $D_4$ and $\{\alpha_2, \alpha_4, \alpha_5, \alpha_6 \}$ is of type $A_4$. 
By using Theorem~\ref{theorem:main_any_type}~(2) repeatedly, we obtain
\begin{align*} 
\varpi_5 \cdot \varpi_5 \cdot \varpi_4 \cdot (\varpi_2 \varpi_4\varpi_5) =& \varpi_5 \cdot \varpi_5 \cdot \left( \varpi_2\varpi_3\varpi_4\varpi_5 + \frac{1}{2}\varpi_2 \varpi_4\varpi_5\varpi_6 \right)\\
=& \varpi_5 \cdot \left( \frac{1}{2}\varpi_1\varpi_2\varpi_3\varpi_4\varpi_5 + \varpi_2\varpi_3\varpi_4\varpi_5\varpi_6 + \frac{4}{5} \cdot \frac{1}{2}\varpi_2\varpi_3\varpi_4\varpi_5\varpi_6 \right) \\
=& \left( \frac{5}{4} \cdot \frac{1}{2} + 1 + \frac{4}{5} \cdot \frac{1}{2} \right) \varpi_1 \varpi_2 \varpi_3 \varpi_4 \varpi_5 \varpi_6.
\end{align*}
Therefore, we conclude from Theorem~\ref{theorem:main_any_type}~(3) that 
\begin{align*} 
A^{\Phi_{E_6}}_{0,1,0,2,3,0} &= \left( \frac{5}{4} \cdot \frac{1}{2} + 1 + \frac{4}{5} \cdot \frac{1}{2} \right) \int_{\Pet_{\Phi_{E_6}}} \varpi_1 \varpi_2 \varpi_3 \varpi_4 \varpi_5 \varpi_6 \\
&= \left( \frac{5}{4} \cdot \frac{1}{2} + 1 + \frac{4}{5} \cdot \frac{1}{2} \right) \cdot 2^{7} \cdot 3^{3} \cdot 5 \\
&= 10800 + 17280 + 6912 =34992.
\end{align*}
\end{example}

\bigskip
\noindent \textbf{Funding.}  
This work was supported in part by Osaka City University Advanced Mathematical Institute (MEXT Joint Usage/Research Center on Mathematics and Theoretical Physics); and in part by JSPS Grant-in-Aid for Young Scientists: 19K14508.

\bigskip
\noindent \textbf{Acknowledgements.}  
I am grateful to Mikiya Masuda for his support and encouragement.
I would like to thank Hiraku Abe, Hideya Kuwata, and Haozhi Zeng for a valuable discussion.
This work is an outcome of a project in \cite{AHKZ} with them.
I also appreciate a useful discussion with Rebecca Goldin, Brent Gorbutt, Megumi Harada, Leonardo Mihalcea, and Rahul Singh.
Finally, I thank the referee for valuable and concrete suggestions to improve the paper.

\smallskip
\smallskip

\appendix

\section{A computation for $m_{i,K}^J$} \label{sect:computation for miKJ}

We give a proof of Lemma~\ref{lemm:computationanytype2} by case-by-case arguments.
By using Theorem~\ref{theorem:MonkPet} one can compute $m_{i,K}^J$, but we give an alternative proof which is a similar argument in \cite[Lemma~5.1]{AHKZ}. 

Let $\mathcal{R} \coloneqq \Sym \Lambda_\R$ be the symmetric algebra of the weight space $\Lambda_\R$. 
As discussed in Section~\ref{sect:Any_Lie_types}, we can construct a ring homomorphism $\mathcal{R} \to H^*(G/B)$ which sends a weight $\chi$ to $c_1(L_{\chi}^*)$. 
It is well-known that this map is surjective (\cite{Bor}).
Composing this map with the restriction map $H^*(G/B) \to H^*(\Pet_{\Phi})$, we obtain the surjective ring homomorphism
\begin{align*}
\varphi: \mathcal{R} \twoheadrightarrow H^*(\Pet_\Phi).
\end{align*}
Here, we note that the restriction map $H^*(G/B) \to H^*(\Pet_{\Phi})$ is surjective by Theorem~\ref{Theorem_Peterson_basis}.

\begin{theorem} $($\cite[Theorem~4.1]{HHM}$)$ \label{theorem:presentation_Pet}
Let $\Phi$ be a crystallographic root system of rank $n$.
The kernel of $\varphi$ is generated by quadratic forms $\alpha_i \varpi_i \ (1 \leq i \leq n)$. 
Namely, the map $\varphi$ induces the isomorphism 
\begin{align*}
H^*(\Pet_\Phi) \cong \mathcal{R}/(\alpha_i \varpi_i \mid 1 \leq i \leq n).
\end{align*}
\end{theorem}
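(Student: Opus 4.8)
The plan is to exploit the surjection $\varphi\colon \mathcal{R}\twoheadrightarrow H^*(\Pet_\Phi)$ already in hand and to pin down its kernel by a two-sided estimate. First I would verify that each quadratic form $\alpha_i\varpi_i$ lies in $\ker\varphi$, so that $\varphi$ descends to a surjection $\bar\varphi\colon \mathcal{R}/(\alpha_i\varpi_i\mid 1\le i\le n)\twoheadrightarrow H^*(\Pet_\Phi)$; then I would show that source and target have the same finite total dimension, forcing $\bar\varphi$ to be bijective. Since a surjection of graded vector spaces of equal finite dimension is an isomorphism, this reduces the theorem to two independent checks: that the relations hold, and that the quotient ring has the expected size.

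For the first check I would pass to $S^1$-equivariant cohomology, using the one-dimensional subtorus $S^1\subset T$ that preserves $\Pet_\Phi$ and the injective localization map $H^*_{S^1}(\Pet_\Phi)\hookrightarrow\bigoplus_{K\subset\Sigma}\R[t]$ indexed by the fixed points (equivalently, by the elements $v_K$ of Theorem~\ref{Theorem_Peterson_basis}). Lifting $\varpi_i$ and $\alpha_i$ to equivariant classes and computing their restrictions via Billey's formula \eqref{eq:Billey}, one checks that $(\alpha_i\varpi_i)|_{v_K}=0$ for every $K$; geometrically this reflects that the $S^1$-weights on the tangent spaces at the fixed points of $\Pet_\Phi$ are built only from simple roots, so that the product of the factor $\alpha_i$ with the factor $\varpi_i$ vanishes pointwise. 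By injectivity of localization this yields $\alpha_i\varpi_i\in\ker\varphi$ after forgetting the equivariant structure.

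For the dimension count, Theorem~\ref{Theorem_Peterson_basis} gives $\dim_\R H^*(\Pet_\Phi)=2^n$ with Poincar\'e polynomial $(1+q^2)^n$, since $\ell(v_K)=|K|$. On the algebraic side, $\mathcal{R}=\Sym\Lambda_\R=\R[\varpi_1,\dots,\varpi_n]$ is a polynomial ring in $n$ variables and the ideal $(\alpha_i\varpi_i)$ is generated by $n$ quadratic forms; hence it suffices to show that this ideal is Artinian, i.e. that the common zero locus of $\{\alpha_i\varpi_i\}$ in $\Lambda_\R$ is the origin. Writing the $\alpha_i$ in the basis $\varpi_1,\dots,\varpi_n$ through the Cartan matrix $C_\Phi$, a common zero forces, in each slot $i$, the vanishing of $\alpha_i$ or of $\varpi_i$; I would rule out nonzero solutions by showing that, for every such choice, the resulting $n$ linear forms are independent, using the invertibility of $C_\Phi$ together with the connectedness of the Dynkin diagram. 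Once the locus is $\{0\}$, the $n$ quadrics form a regular sequence, the quotient is an Artinian complete intersection with Poincar\'e polynomial $\prod_{i=1}^n(1+q^2)=(1+q^2)^n$ and total dimension $2^n$, matching $H^*(\Pet_\Phi)$; thus $\bar\varphi$ is an isomorphism.

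The main obstacle is the Artinian/regular-sequence step: verifying that every mixed linear system obtained by selecting $\alpha_i$ or $\varpi_i$ in each slot is nondegenerate is a genuinely combinatorial statement about the root system, and giving a uniform argument valid across all Lie types, rather than a case-by-case check, is the delicate point. The equivariant vanishing of $\alpha_i\varpi_i$ is more routine but still demands careful bookkeeping of the fixed-point restrictions; I would organize that computation so that it runs in parallel with the type-$A$ relation underlying Lemma~\ref{lemm:TypeA}.
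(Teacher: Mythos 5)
Your overall architecture---descend the known surjection $\varphi$ to $\bar\varphi$ and match finite dimensions, using $\dim_\R H^*(\Pet_\Phi)=2^n$ from Theorem~\ref{Theorem_Peterson_basis}---is the right shape, and it is essentially the shape of the proof in the cited source (the present paper gives no proof of its own; it quotes \cite[Theorem~4.1]{HHM}). However, your verification of the relations fails at a concrete point: the classes $\alpha_i\varpi_i$ do \emph{not} restrict to zero at the $S^1$-fixed points. Take the fixed point $eB$, the one indexed by $K=\emptyset$ (note also that the $S^1$-fixed points of $\Pet_\Phi$ are the points $w_KB$ for the \emph{longest} elements $w_K$ of the parabolic subgroups $W_K$, not the elements $v_K$). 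Under the restriction to $S^1$ every simple root $\alpha_j$ maps to $t$, so $\alpha_i|_{eB}=t$, while $\varpi_i|_{eB}$ is a strictly positive multiple of $t$ (each fundamental weight of an irreducible system is a positive combination of simple roots); hence $(\alpha_i\varpi_i)|_{eB}$ is a nonzero multiple of $t^2$. The relation $\alpha_i\varpi_i=0$ holds only non-equivariantly: the equivariant class $\alpha_i^{S^1}\varpi_i^{S^1}$ lies in $t\cdot H^*_{S^1}(\Pet_\Phi)$ but is not zero, and its equivariant lift carries $t$-correction terms (already for $A_1$, where $\Pet_\Phi=\mathbb{P}^1$, the correct equivariant relation is $p_{s_1}(p_{s_1}-t)=0$, not $p_{s_1}^2=0$). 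Membership in $t\cdot H^*_{S^1}(\Pet_\Phi)$ cannot be checked pointwise, since every positive-degree restriction is automatically divisible by $t$; one must expand $\alpha_i^{S^1}\varpi_i^{S^1}$ in an $\R[t]$-module basis (e.g.\ equivariant Peterson Schubert classes) and verify that all coefficients are divisible by $t$, or equivalently establish the full equivariant presentation first and then set $t=0$. That computation is precisely the substance of \cite{HHM}, so your ``more routine'' step is in fact where the real work lies.

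Your dimension count is the right idea but not yet a proof, and ``invertibility of $C_\Phi$ together with connectedness'' is not the correct ingredient. What is needed is: for every subset $S\subseteq\{1,\dots,n\}$, the $n$ linear forms $\{\alpha_i : i\in S\}\cup\{\varpi_j : j\notin S\}$ are independent. Writing the $\alpha_i$ in the basis $\varpi_1,\dots,\varpi_n$, the determinant of this mixed system is, up to sign, the principal minor $\det(C_{S\times S})$ of the Cartan matrix, and principal minors of Cartan matrices of finite root systems are positive: $C_{S\times S}$ is the Cartan matrix of the root subsystem generated by $\{\alpha_i : i\in S\}$, or more directly $C=DB$ with $D$ diagonal positive and $B$ symmetric positive definite, so $\det(C_{S\times S})=\det(D_S)\det(B_{S\times S})>0$. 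This is a uniform argument across all Lie types, so the step you feared was delicate is actually clean. Granting it, the common zero locus is the origin, the $n$ quadrics form a regular sequence, the quotient is Artinian with Hilbert series $(1+q^2)^n$ and total dimension $2^n$, and $\bar\varphi$ is an isomorphism---but only once the relations themselves have been established, which is the genuine gap identified above.
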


We now compute $m_{i,K}^J$ by case-by-case arguments.
In what follows, we take the convention $\varpi_0=\varpi_{n+1}=0$.
Recall that we assume to be fixed an ordering of the simple roots as given in Figure~\ref{pic: Dynkin diagrams}.
Note that the Cartan matrix (e.g. \cite[p.59]{Hum1}) expresses the change of basis from the fundamental weights $\varpi_1, \ldots, \varpi_n$ to the simple roots $\alpha_1, \ldots, \alpha_n$.

\subsection{Classical types} 
The following lemmas correspond to Lemma~\ref{lemm:TypeA}.

\begin{lemma} \label{lemm:TypeB-1}
Let $\Phi$ be an irreducible root system of type $B_n$.
For $1 \leq \a \leq i \leq n-1$, we have
\begin{align*}
\varpi_i \cdot \big( \varpi_\a \varpi_{\a+1} \cdots \varpi_{n-1} \big) =  \frac{n-i}{n-\a+1}  \varpi_{\a-1} \varpi_\a \cdots \varpi_{n-1} + \frac{2(i-\a+1)}{n-\a+1} \varpi_\a \varpi_{\a+1} \cdots \varpi_{n}. 
\end{align*}
in $H^*(\Pet_{\Phi_{B_n}})$.
In particular, the second summand of the right hand side gives the value of $m_{i,K}^J$ when $(\Phi_K,\Phi_J)$ is of type $(A_{\j-1},B_{\j})$ in Table~$\ref{tab:A list of values}$. 
\end{lemma}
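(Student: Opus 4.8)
The plan is to work entirely inside the presentation $H^*(\Pet_{\Phi_{B_n}}) \cong \mathcal{R}/(\alpha_i\varpi_i \mid 1\le i\le n)$ from Theorem~\ref{theorem:presentation_Pet}, exactly as in the proof of Lemma~\ref{lemm:TypeA} in \cite{AHKZ}. The key relation I would extract is that, since $\alpha_i\varpi_i = 0$ in $H^*(\Pet_{\Phi_{B_n}})$, I can re-express each product $\varpi_i \cdot (\text{tail})$ once I know how to write each $\alpha_i$ as a $\Z$-linear combination of the $\varpi_j$'s. For type $B_n$ the transformation is governed by the Cartan matrix, so I first record the inverse relations expressing $\alpha_i$ in terms of $\varpi_{i-1}, \varpi_i, \varpi_{i+1}$; the crucial feature of $B_n$ (as opposed to $A_n$) is the short root $\alpha_n$, which produces the asymmetric coefficient $2$ near the right end of the Dynkin diagram. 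Concretely I would use $\alpha_i = -\varpi_{i-1} + 2\varpi_i - \varpi_{i+1}$ for $1 \le i \le n-2$, the boundary relation $\alpha_{n-1} = -\varpi_{n-2} + 2\varpi_{n-1} - 2\varpi_n$, and $\alpha_n = -\varpi_{n-1} + 2\varpi_n$, with the convention $\varpi_0 = \varpi_{n+1} = 0$.

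First I would establish the base identity relating $\varpi_i^2$ to neighbouring products. From $\alpha_i\varpi_i = 0$ and the expansion of $\alpha_i$, multiplying by the tail $\varpi_{\a}\cdots\widehat{\varpi_i}\cdots\varpi_{n-1}$ (omitting one factor of $\varpi_i$) yields a three-term relation of the form
\begin{align*}
2\,\varpi_i \cdot (\varpi_\a \cdots \varpi_{n-1}) = \varpi_{i-1}\varpi_i(\cdots) + (\text{coefficient})\,\varpi_i\varpi_{i+1}(\cdots),
\end{align*}
valid inside the quotient ring. Then I would run the induction that mirrors Lemma~\ref{lemm:TypeA}: starting from the string $\varpi_\a\varpi_{\a+1}\cdots\varpi_{n-1}$ and multiplying by $\varpi_i$, I repeatedly use the relation to push the extra factor outward, reducing the whole expression to a combination of two square-free monomials, namely $\varpi_{\a-1}\varpi_\a\cdots\varpi_{n-1}$ and $\varpi_\a\cdots\varpi_n$. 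The coefficients are pinned down by solving a small linear recursion in the index $i$, and I would verify the claimed $\frac{n-i}{n-\a+1}$ and $\frac{2(i-\a+1)}{n-\a+1}$ by checking that they sum correctly under the boundary relations and match at the endpoints $i=\a$ and $i=n-1$.

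The main obstacle I expect is bookkeeping the short-root correction at the $\alpha_n$ end, which is precisely what distinguishes the $B_n$ coefficient $\frac{2(i-\a+1)}{n-\a+1}$ from the symmetric $A_n$ value $\frac{i-\a+1}{\b-\a+2}$. I would handle this by treating the rightmost factor $\varpi_{n-1}$ carefully: when the inductive relation reaches the node $\alpha_{n-1}$, the coefficient $-2\varpi_n$ (rather than $-\varpi_n$) feeds the factor of $2$ into the numerator of the second summand. A clean way to organize this is to fix the ratio $\frac{\text{left coefficient}}{\text{right coefficient}}$ at each step and show it evolves so that the two coefficients, with denominators cleared, telescope to $n-i$ and $2(i-\a+1)$ over the common denominator $n-\a+1$. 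Finally, I would read off the second summand and note that it realizes $m_{i,K}^J$ for the pair $(\Phi_K,\Phi_J)$ of type $(A_{\j-1},B_\j)$, with $\j = n-\a+1$ and $i' = i-\a+1$, so the value $\frac{2i'}{\j}$ in Table~\ref{tab:A list of values} follows immediately. By Lemma~\ref{lemm:computationanytype1.5} this suffices for the general entry, since $m_{i,K}^J$ depends only on the local Dynkin data.
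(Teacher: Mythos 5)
Your proposal is correct, and it reaches the formula by a genuinely different route than the paper. The paper proves the lemma by induction on the string length $n-a$: the base case $a=i=n-1$ is the single relation $\varpi_{n-1}\alpha_{n-1}=0$ from Theorem~\ref{theorem:presentation_Pet}; the inductive step treats $i=a$ by expanding $\varpi_a^2(\varpi_{a+1}\cdots\varpi_{n-2})$ with the type $A$ formula (Lemma~\ref{lemm:TypeA}, transported into $B_n$ via Lemmas~\ref{lemm:computationanytype1} and \ref{lemm:computationanytype1.5}), then applying the inductive hypothesis to the shorter string and solving for the left-hand side, which reappears on the right-hand side with a proportionality factor; the case $i>a$ factors out $\varpi_a$ and reduces to $i=a$. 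You instead fix the string and vary the multiplier: writing $f(j)=\varpi_j\cdot(\varpi_a\varpi_{a+1}\cdots\varpi_{n-1})$, the relations $\alpha_j\varpi_j=0$ give $2f(j)=f(j-1)+f(j+1)$ for $a\le j\le n-2$ and $2f(n-1)=f(n-2)+2f(n)$, and this is an invertible linear system (its coefficient matrix in the unknowns $f(a),\dots,f(n-1)$ is the type $A_{n-a}$ Cartan matrix) whose unique solution in terms of the two square-free classes $f(a-1)$ and $f(n)$ is exactly $f(i)=\frac{n-i}{n-a+1}\,f(a-1)+\frac{2(i-a+1)}{n-a+1}\,f(n)$. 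Your route is self-contained --- it never needs Lemma~\ref{lemm:TypeA} as an input (it effectively re-derives it) --- and it adapts verbatim to types $C_n$ and $D_n$ by changing only the end relation, while the paper's route reuses established results and the transfer principle, matching the inductive style of the rest of the appendix. One caution on wording: ``repeatedly pushing the extra factor outward'' is not a terminating rewriting procedure, since rewriting $f(j)$ through its neighbours reintroduces squares one step away and then bounces back; the rigorous content is precisely the linear recursion you mention, where uniqueness of the solution given the boundary classes $f(a-1)$ and $f(n)$ is what turns your endpoint verification into a proof. Your final reading of the table entry, $m_{i,K}^J=\frac{2i'}{\j}$ with $\j=n-a+1$ and $i'=i-a+1$ via Lemma~\ref{lemm:computationanytype1.5}, agrees with the paper.
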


\begin{proof}
We prove the claim by induction on $n-\a (\geq 1)$. As the base case when $n-a=1$, we have $\a=i=n-1$. 
Then since $\alpha_{n-1} = -\varpi_{n-2}+2\varpi_{n-1}-2\varpi_{n}$, we have
\begin{align*}
\varpi_{n-1} (-\varpi_{n-2}+2\varpi_{n-1}-2\varpi_{n}) = \varpi_{n-1} \alpha_{n-1} =0
\end{align*}
by Theorem~\ref{theorem:presentation_Pet}.
This equality can be expressed as
\begin{align*}
\varpi_{n-1}^2 = \frac{1}{2}\varpi_{n-2}\varpi_{n-1}+\frac{2}{2}\varpi_{n-1}\varpi_{n},
\end{align*}
which proves the base case.

We proceed to the inductive step.
Suppose now that $1<n-a$ and that the claim holds for $n-\a'<n-\a$, with any allowable choices of $\a'\le i' \le n-1$.
When $i=a$, we have
\begin{align*}
\varpi_{a}^2(\varpi_{a+1}\cdots \varpi_{n-1}) 
&=\Big( \varpi_{a}^2(\varpi_{a+1}\cdots \varpi_{n-2}) \Big) \varpi_{n-1} \\
&=\Big(\frac{n-a-1}{n-a}\varpi_{a-1}\varpi_{a}\cdots \varpi_{n-2} + \frac{1}{n-a}\varpi_{a}\varpi_{a+1}\cdots \varpi_{n-1} \Big)\varpi_{n-1} \\
&\hspace{180pt}\text{(by Lemmas~\ref{lemm:TypeA}, \ref{lemm:computationanytype1}, and \ref{lemm:computationanytype1.5})}\\
&=\frac{n-a-1}{n-a} \varpi_{a-1}\varpi_{a}\cdots \varpi_{n-1} + \frac{1}{n-a} \varpi_{a}\Big( \varpi_{a+1}\cdots \varpi_{n-2}\varpi_{n-1}^2 \Big) \\
&=\frac{n-a-1}{n-a} \varpi_{a-1}\varpi_{a}\cdots \varpi_{n-1} \\
&\hspace{30pt}+ \frac{1}{(n-a)^2} \varpi_{a}\Big( \varpi_{a}\varpi_{a+1}\cdots \varpi_{n-1} + 2(n-a-1) \varpi_{a+1}\varpi_{a+2}\cdots \varpi_{n}\Big) \\
&\hspace{200pt} \text{(by the inductive assumption)}.
\end{align*}
Since the left hand side and the second summand of the right hand side are proportional, this equality can be written as
\begin{align*}
\frac{(n-a)^2-1}{(n-a)^2}\varpi_{a}^2(\varpi_{a+1}\cdots \varpi_{n-1}) 
=\frac{n-a-1}{n-a} \varpi_{a-1}\varpi_{a}\cdots \varpi_{n-1} + \frac{2(n-a-1)}{(n-a)^2} \varpi_{a}\varpi_{a+1}\cdots \varpi_{n}.
\end{align*}
Noting that the numerator of the coefficient of the left hand side is equal to $(n-a)^2-1=(n-a-1)(n-a+1)$, we obtain 
\begin{align}\label{eq:proof_lemma_B1}
\varpi_{a}^2(\varpi_{a+1}\cdots \varpi_{n-1}) = \ \frac{n-a}{n-a+1}\varpi_{a-1}\varpi_{a}\cdots \varpi_{n-1} + \frac{2}{n-a+1} \varpi_{a}\varpi_{a+1}\cdots \varpi_{n}, 
\end{align}
which proves the claim for the case $i=a$. 
When $a<i \ (\le n-1)$, we have 
\begin{align*}
&(\varpi_{a}\varpi_{a+1}\cdots \varpi_{i-1}) \varpi_{i}^2 (\varpi_{i+1}\varpi_{i+2}\cdots \varpi_{n-1})\\
=&\varpi_{a}\Big((\varpi_{a+1}\cdots \varpi_{i-1}) \varpi_{i}^2 (\varpi_{i+1}\varpi_{i+2}\cdots \varpi_{n-1}) \Big) \\
=&\varpi_{a}\Big( \frac{n-i}{n-a}\varpi_{a}\varpi_{a+1}\cdots \varpi_{n-1} + \frac{2(i-a)}{n-a} \varpi_{a+1}\varpi_{a+2}\cdots \varpi_{n} \Big)\\
&\hspace{220pt} \text{(by the inductive assumption)} \\
=&\frac{n-i}{n-a}\varpi_{a}^2(\varpi_{a+1}\cdots \varpi_{n-1}) + \frac{2(i-a)}{n-a}\varpi_{a}\varpi_{a+1}\cdots \varpi_{n} \\ 
=&\frac{n-i}{n-a} \Big( \frac{n-a}{n-a+1}\varpi_{a-1}\varpi_{a}\cdots \varpi_{n-1} + \frac{2}{n-a+1}\varpi_{a}\varpi_{a+1}\cdots \varpi_{n}\Big) \\
&\hspace{30pt}+ \frac{2(i-a)}{n-a}\varpi_{a}\varpi_{a+1}\cdots \varpi_{n}\qquad \text{(by \eqref{eq:proof_lemma_B1})}\\
=&\frac{n-i}{n-a+1} \varpi_{a-1}\varpi_{a}\cdots \varpi_{n-1} + \frac{2(i-a+1)}{n-a+1}\varpi_{a}\varpi_{a+1}\cdots \varpi_{n}.
\end{align*}
This completes the proof by induction.
\end{proof}

\begin{lemma} \label{lemm:TypeB-2}
Let $\Phi$ be an irreducible root system of type $B_n$.
For $1 \leq \a \leq i \leq n$, we have
\begin{align*}
\varpi_i \cdot \big( \varpi_\a \varpi_{\a+1} \cdots \varpi_{n} \big) &= b_i \, \varpi_{\a-1} \varpi_\a \cdots \varpi_{n}, 
\end{align*}
in $H^*(\Pet_{\Phi_{B_n}})$ where $b_i \ (\a \leq i \leq n)$ is defined by 
\begin{align*}
b_i=\begin{cases}
1 \ \ \ &{\rm if} \ a \leq i \leq n-1, \\
\frac{1}{2} \ \ \ &{\rm if} \ i = n.
\end{cases}
\end{align*}
In particular, this formula derives the value of $m_{i,K}^J$ when $(\Phi_K,\Phi_J)$ is of type $(B_{\j-1},B_{\j})$ in Table~$\ref{tab:A list of values}$. 
\end{lemma}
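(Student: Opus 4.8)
The plan is to derive both cases of the formula from Lemma~\ref{lemm:TypeB-1}, which has just been established, together with the single quadratic relation supplied by Theorem~\ref{theorem:presentation_Pet}, so that no separate induction is required. I would first dispose of the endpoint case $i=n$, and then read off the remaining cases $\a\le i\le n-1$ from it.

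For the endpoint case I would first record that the last row of the $B_n$ Cartan matrix gives $\alpha_n=2\varpi_n-\varpi_{n-1}$, so that Theorem~\ref{theorem:presentation_Pet} yields $\alpha_n\varpi_n=0$ and hence the relation $\varpi_n^2=\tfrac12\varpi_{n-1}\varpi_n$ in $H^*(\Pet_{\Phi_{B_n}})$. Setting $X\coloneqq\varpi_n\cdot(\varpi_\a\varpi_{\a+1}\cdots\varpi_n)=(\varpi_\a\cdots\varpi_{n-1})\varpi_n^2$ and applying this relation once, I obtain $X=\tfrac12(\varpi_\a\cdots\varpi_{n-2})\varpi_{n-1}^2\varpi_n$. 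Recognizing $(\varpi_\a\cdots\varpi_{n-2})\varpi_{n-1}^2=\varpi_{n-1}\cdot(\varpi_\a\cdots\varpi_{n-1})$ and expanding it by Lemma~\ref{lemm:TypeB-1} with $i=n-1$, then multiplying the two resulting terms by $\varpi_n$, the first term already has the target shape $\varpi_{\a-1}\cdots\varpi_n$, while the second is proportional to $\varpi_n\cdot(\varpi_\a\cdots\varpi_n)=X$ again. This produces the linear equation
\[
X=\frac{1}{2(n-\a+1)}\,\varpi_{\a-1}\varpi_\a\cdots\varpi_n+\frac{n-\a}{n-\a+1}\,X,
\]
which I solve to get $X=\tfrac12\varpi_{\a-1}\varpi_\a\cdots\varpi_n$, giving $b_n=\tfrac12$. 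The degenerate case $\a=n$, where $K=\{\alpha_n\}$, is simply the defining relation $\varpi_n^2=\tfrac12\varpi_{n-1}\varpi_n$ itself.

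For $\a\le i\le n-1$ I would write $\varpi_i\cdot(\varpi_\a\cdots\varpi_n)=\big(\varpi_i\cdot(\varpi_\a\cdots\varpi_{n-1})\big)\varpi_n$, expand the inner product by Lemma~\ref{lemm:TypeB-1}, and multiply through by $\varpi_n$. One summand is $\tfrac{n-i}{n-\a+1}\varpi_{\a-1}\cdots\varpi_n$ directly, and the other is $\tfrac{2(i-\a+1)}{n-\a+1}\,\varpi_n\cdot(\varpi_\a\cdots\varpi_n)$, into which I substitute the endpoint value $X=\tfrac12\varpi_{\a-1}\cdots\varpi_n$ just obtained. The coefficients then collapse to $\tfrac{(n-i)+(i-\a+1)}{n-\a+1}=1$, so $b_i=1$. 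Finally, under the dictionary in which $\alpha_n$ is the short end $\beta_\j$, the endpoint $i=n$ is precisely the case $\beta_{i'}=\beta_\j$; thus $b_n=\tfrac12$ and $b_i=1$ for $i<n$ reproduce the $(B_{\j-1},B_\j)$ row of Table~\ref{tab:A list of values}.

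The main obstacle is the self-referential character of the $i=n$ computation: reducing $\varpi_n^2$ via the quadratic relation reintroduces the unknown product $X$, so the reduction does not terminate by blind rewriting, and the point is to organize the rewriting so that exactly one occurrence of $X$ survives and then solve the resulting linear equation. Everything else is bookkeeping with the two telescoping products $\varpi_{\a-1}\cdots\varpi_n$ and $\varpi_\a\cdots\varpi_n$ coming out of Lemma~\ref{lemm:TypeB-1}.
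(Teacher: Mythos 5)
Your proof is correct, but it takes a genuinely different route from the paper's. The paper proves the lemma by induction on the interval length $n-\a+1$: the base case is $\a=i=n$ (the relation $\varpi_n^2=\tfrac12\varpi_{n-1}\varpi_n$), and in the inductive step the case $i=\a$ combines Lemma~\ref{lemm:TypeB-1} with the inductive hypothesis applied to the shorter interval $[\a+1,n]$ at $i=n$, the unknown product reappearing on the right so that one solves for it (``the left hand side and the second summand of the right hand side are proportional''); the cases $i>\a$ then follow by peeling $\varpi_\a$ off the left. You eliminate the induction entirely: you settle the column $i=n$ for every $\a$ simultaneously with one application of the quadratic relation and one application of Lemma~\ref{lemm:TypeB-1} at $i=n-1$, solving the resulting linear equation in $X$, after which every case $\a\le i\le n-1$ drops out of Lemma~\ref{lemm:TypeB-1} plus the endpoint value, the coefficients telescoping to $\tfrac{(n-i)+(i-\a+1)}{n-\a+1}=1$. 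The self-referential ``solve for the unknown'' device is the same in both arguments; you have relocated it from the $i=\a$ subcase of the induction to the $i=n$ case, where it can be run once and for all precisely because Lemma~\ref{lemm:TypeB-1} is already available for every $\a$. What your route buys is brevity and the structural observation that the only new content of this lemma beyond Lemma~\ref{lemm:TypeB-1} is the single column $i=n$; what the paper's route buys is uniformity, since the same inductive template is reused essentially verbatim for the type $C_n$ and $D_n$ analogues (Lemmas~\ref{lemm:TypeC-1}--\ref{lemm:TypeD-2}). Your handling of the degenerate cases ($\a=n$ reducing to the defining relation, and the empty product $\varpi_\a\cdots\varpi_{n-2}$ when $\a=n-1$) is also sound.
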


\begin{proof}
We prove the claim by induction on $n-\a +1(\geq 1)$. 
As the base case when $n-a+1=1$, we have $\a=i=n$. 
Then since $\alpha_{n} = -\varpi_{n-1}+2\varpi_{n}$, we have
\begin{align*}
\varpi_{n} (-\varpi_{n-1}+2\varpi_{n}) = \varpi_{n} \alpha_{n} =0
\end{align*}
by Theorem~\ref{theorem:presentation_Pet}.
This equality can be written as
\begin{align*}
\varpi_{n}^2 = \frac{1}{2}\varpi_{n-1}\varpi_{n},
\end{align*}
which proves the base case.

Suppose now that $1<n-a+1$ and that the claim holds for $n-\a' +1<n-\a+1$ with $\a'\le i' \le n$.
When $i=a$, we have
\begin{align*}
\varpi_{a}^2(\varpi_{a+1}\cdots \varpi_{n}) 
&=\Big( \varpi_{a}^2(\varpi_{a+1}\cdots \varpi_{n-1}) \Big) \varpi_{n} \\
&=\Big(\frac{n-a}{n-a+1}\varpi_{a-1}\varpi_{a}\cdots \varpi_{n-1} + \frac{2}{n-a+1}\varpi_{a}\varpi_{a+1}\cdots \varpi_{n} \Big)\varpi_{n} \\
&\hspace{200pt}\text{(by Lemma~\ref{lemm:TypeB-1})}\\
&=\frac{n-a}{n-a+1} \varpi_{a-1}\varpi_{a}\cdots \varpi_{n} + \frac{2}{n-a+1} \varpi_{a}\Big( \varpi_{a+1}\cdots \varpi_{n-1}\varpi_{n}^2 \Big) \\
&=\frac{n-a}{n-a+1} \varpi_{a-1}\varpi_{a}\cdots \varpi_{n} + \frac{1}{n-a+1} \varpi_{a}\Big(  \varpi_{a}\varpi_{a+1}\cdots \varpi_{n} \Big) \\
&\hspace{200pt} \text{(by the inductive hypothesis)}.
\end{align*}
Since the left hand side and the second summand of the right hand side are proportional, this equality can be written as
\begin{align}\label{eq:proof_lemma_B2}
\varpi_{a}^2(\varpi_{a+1}\cdots \varpi_{n}) = \varpi_{a-1}\varpi_{a}\cdots \varpi_{n},
\end{align}
which proves the claim for the case $i=a$. 
When $a<i \ (\le n)$, we obtain 
\begin{align*}
&(\varpi_{a}\varpi_{a+1}\cdots \varpi_{i-1}) \varpi_{i}^2 (\varpi_{i+1}\varpi_{i+2}\cdots \varpi_{n})\\
=&\varpi_{a}\Big((\varpi_{a+1}\cdots \varpi_{i-1}) \varpi_{i}^2 (\varpi_{i+1}\varpi_{i+2}\cdots \varpi_{n}) \Big) \\
=&\varpi_{a}\Big( b_i \varpi_{a}\varpi_{a+1}\cdots \varpi_{n} \Big) \ \ \ \text{(by the induction hypothesis)} \\
=&b_i\varpi_{a}^2\varpi_{a+1}\cdots \varpi_{n} \\ 
=&b_i\varpi_{a-1}\varpi_{a}\cdots \varpi_{n} \ \ \ \text{(by \eqref{eq:proof_lemma_B2})},
\end{align*}
as desired.
This completes the proof by induction.
\end{proof}

We can prove the following lemmas for type $C_n$ and $D_n$ by a similar argument of Lemmas~\ref{lemm:TypeB-1} and \ref{lemm:TypeB-2} with suitable modifications, so we omit details (note that the base case in type $D_n$ is $a=n-2$).

\begin{lemma} \label{lemm:TypeC-1}
Let $\Phi$ be an irreducible root system of type $C_n$.
For $1 \leq \a \leq i \leq n-1$, we have
\begin{align*}
\varpi_i \cdot \big( \varpi_\a \varpi_{\a+1} \cdots \varpi_{n-1} \big) =  \frac{n-i}{n-\a+1}  \varpi_{\a-1} \varpi_\a \cdots \varpi_{n-1} + \frac{i-\a+1}{n-\a+1} \varpi_\a \varpi_{\a+1} \cdots \varpi_{n} 
\end{align*}
in $H^*(\Pet_{\Phi_{C_n}})$.
In particular, the second summand of the right hand side gives the value of $m_{i,K}^J$ when $(\Phi_K,\Phi_J)$ is of type $(A_{\j-1},C_{\j})$ in Table~$\ref{tab:A list of values}$. 
\end{lemma}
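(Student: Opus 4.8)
The plan is to follow the inductive scheme of Lemma~\ref{lemm:TypeB-1} almost verbatim, the only genuine change being the Cartan-matrix coefficient that enters the base case. As in type $B$, the engine is the presentation $H^*(\Pet_{\Phi_{C_n}}) \cong \mathcal{R}/(\alpha_i\varpi_i \mid 1\le i\le n)$ of Theorem~\ref{theorem:presentation_Pet}, which lets me expand each $\alpha_i$ in the fundamental weights and read off relations $\alpha_i\varpi_i=0$. I would argue by induction on $n-\a\ (\ge 1)$, keeping the convention $\varpi_0=\varpi_{n+1}=0$.

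For the base case $n-\a=1$ (so $\a=i=n-1$), the relevant relation is $\alpha_{n-1}\varpi_{n-1}=0$. Here the type-$C$ Cartan matrix gives $\alpha_{n-1}=-\varpi_{n-2}+2\varpi_{n-1}-\varpi_n$, where the coefficient $-1$ of $\varpi_n$ (rather than $-2$ as in type $B$, since now $\alpha_n$ is the long root) is exactly the source of the discrepancy between Lemmas~\ref{lemm:TypeB-1} and \ref{lemm:TypeC-1}. This yields $\varpi_{n-1}^2=\tfrac12\varpi_{n-2}\varpi_{n-1}+\tfrac12\varpi_{n-1}\varpi_n$, which is the claimed formula at $\a=i=n-1$.

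For the inductive step I would first treat $i=\a$. Writing $\varpi_\a^2(\varpi_{\a+1}\cdots\varpi_{n-1})=\big(\varpi_\a^2(\varpi_{\a+1}\cdots\varpi_{n-2})\big)\varpi_{n-1}$, the inner factor lives entirely among the type-$A$ nodes $\{\a,\dots,n-2\}$ (the special bond sits between $n-1$ and $n$), so Lemma~\ref{lemm:TypeA}, justified for this embedded string by Lemmas~\ref{lemm:computationanytype1} and \ref{lemm:computationanytype1.5}, applies; the resulting $\varpi_{n-1}^2$ factor is then expanded via the inductive hypothesis applied with parameter $\a+1$. Collecting the term proportional to the left-hand side and simplifying $1-\tfrac1{(n-\a)^2}=\tfrac{(n-\a-1)(n-\a+1)}{(n-\a)^2}$ produces $\varpi_\a^2(\varpi_{\a+1}\cdots\varpi_{n-1})=\tfrac{n-\a}{n-\a+1}\varpi_{\a-1}\cdots\varpi_{n-1}+\tfrac1{n-\a+1}\varpi_\a\cdots\varpi_n$. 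The remaining case $\a<i\le n-1$ then reduces to this one: peel off the front factor $\varpi_\a$, apply the inductive hypothesis (parameter $\a+1$) to the inner product, and substitute the just-established $i=\a$ identity.

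The hard part will not be conceptual but the rational-coefficient bookkeeping, and in particular verifying the single arithmetic identity $n-i+(i-\a)(n-\a+1)=(n-\a)(i-\a+1)$ that collapses the two contributions to $\varpi_\a\cdots\varpi_n$ into the clean coefficient $\tfrac{i-\a+1}{n-\a+1}$. Care is also needed to confirm that it is the base-case coefficient $-1$ (not the $-2$ of type $B$) that propagates through the recursion, so that the factor $2$ pervading Lemma~\ref{lemm:TypeB-1} is uniformly absent here, leaving the symmetric splitting recorded in the statement.
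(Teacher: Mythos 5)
Your proposal is correct and is exactly the route the paper intends: for this lemma the paper omits the proof, saying it follows from the argument of Lemma~\ref{lemm:TypeB-1} ``with suitable modifications,'' and your proof supplies precisely those modifications — the base-case relation $\varpi_{n-1}\alpha_{n-1}=0$ with $\alpha_{n-1}=-\varpi_{n-2}+2\varpi_{n-1}-\varpi_n$ (coefficient $-1$ on $\varpi_n$ since $\alpha_n$ is long in type $C$), the same induction on $n-\a$ splitting into the cases $i=\a$ and $\a<i\le n-1$, and the same use of Lemmas~\ref{lemm:TypeA}, \ref{lemm:computationanytype1}, and \ref{lemm:computationanytype1.5} on the embedded type-$A$ string. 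The arithmetic you flag, $n-i+(i-\a)(n-\a+1)=(n-\a)(i-\a+1)$, does hold, so the coefficients collapse as claimed.
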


\begin{lemma} \label{lemm:TypeC-2}
Let $\Phi$ be an irreducible root system of type $C_n$.
For $1 \leq \a \leq i \leq n$, we have
\begin{align*}
\varpi_i \cdot \big( \varpi_\a \varpi_{\a+1} \cdots \varpi_{n} \big) &=  \varpi_{\a-1} \varpi_\a \cdots \varpi_{n}  
\end{align*}
in $H^*(\Pet_{\Phi_{C_n}})$.
In particular, this formula derives the value of $m_{i,K}^J$ when $(\Phi_K,\Phi_J)$ is of type $(C_{\j-1},C_{\j})$ in Table~$\ref{tab:A list of values}$. 
\end{lemma}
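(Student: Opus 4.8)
The plan is to follow the inductive proof of Lemma~\ref{lemm:TypeB-2} essentially verbatim, substituting the type $C_n$ input Lemma~\ref{lemm:TypeC-1} for its type $B_n$ analogue Lemma~\ref{lemm:TypeB-1} and recomputing the base case from the $C_n$ Cartan matrix. I would induct on $n-\a+1\ge 1$. In the base case $n-\a+1=1$ one has $\a=i=n$; since $\alpha_n$ is the \emph{long} simple root of $C_n$, its expansion in fundamental weights is $\alpha_n=-2\varpi_{n-1}+2\varpi_n$, so Theorem~\ref{theorem:presentation_Pet} gives
\begin{align*}
0=\varpi_n\,\alpha_n=\varpi_n(-2\varpi_{n-1}+2\varpi_n),
\end{align*}
that is, $\varpi_n^2=\varpi_{n-1}\varpi_n$. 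This is precisely where type $C_n$ parts from type $B_n$: the short root of $B_n$ forced the coefficient $\tfrac12$, whereas the long root of $C_n$ forces the coefficient $1$, which is what ultimately makes the right-hand coefficient equal to $1$ for every $i$ (as opposed to the value $b_n=\tfrac12$ in Lemma~\ref{lemm:TypeB-2}).

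For the inductive step I would first handle $i=\a$. Writing $\varpi_\a^2(\varpi_{\a+1}\cdots\varpi_n)=\bigl(\varpi_\a^2(\varpi_{\a+1}\cdots\varpi_{n-1})\bigr)\varpi_n$, I expand the inner factor by Lemma~\ref{lemm:TypeC-1} at $i=\a$, multiply by $\varpi_n$, and rewrite the resulting term containing $\varpi_n^2$ by applying the induction hypothesis to $\varpi_n\cdot(\varpi_{\a+1}\cdots\varpi_n)$. Exactly as in Lemma~\ref{lemm:TypeB-2}, the monomial $\varpi_\a^2(\varpi_{\a+1}\cdots\varpi_n)$ then reappears on the right with a coefficient strictly less than $1$; transposing it and dividing out leaves $\varpi_\a^2(\varpi_{\a+1}\cdots\varpi_n)=\varpi_{\a-1}\varpi_\a\cdots\varpi_n$. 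With the case $i=\a$ in hand, the range $\a<i\le n$ follows by factoring out the leftmost $\varpi_\a$, applying the induction hypothesis to $(\varpi_{\a+1}\cdots\varpi_{i-1})\varpi_i^2(\varpi_{i+1}\cdots\varpi_n)$, and then invoking the $i=\a$ identity just established.

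I do not expect a genuine obstacle here, since the combinatorial skeleton of the argument is identical to that of Lemma~\ref{lemm:TypeB-2}; the only substantive point is to verify that the long-root relation $\varpi_n^2=\varpi_{n-1}\varpi_n$ (rather than $\varpi_n^2=\tfrac12\varpi_{n-1}\varpi_n$) threads correctly through the induction, together with the fact that the coefficient $\tfrac{i-\a+1}{n-\a+1}$ in Lemma~\ref{lemm:TypeC-1} carries no extra factor of $2$. The mild bookkeeping risk is in the $i=\a$ step, where one must confirm that the coefficient of the reappearing monomial is $\tfrac{1}{n-\a+1}$ so that the cancellation collapses cleanly; granting Lemma~\ref{lemm:TypeC-1} and Theorem~\ref{theorem:presentation_Pet}, no further difficulty arises.
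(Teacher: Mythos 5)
Your proposal is correct and is essentially the paper's own proof: the paper explicitly states that Lemmas~\ref{lemm:TypeC-1} and \ref{lemm:TypeC-2} follow "by a similar argument of Lemmas~\ref{lemm:TypeB-1} and \ref{lemm:TypeB-2} with suitable modifications" and omits the details, and your write-up supplies exactly those modifications. In particular, your base case $\alpha_n=-2\varpi_{n-1}+2\varpi_n$ (long root), hence $\varpi_n^2=\varpi_{n-1}\varpi_n$, and your bookkeeping in the $i=\a$ step (the reappearing monomial has coefficient $\tfrac{1}{n-\a+1}$, so transposing leaves $\tfrac{n-\a}{n-\a+1}$ on both sides) are both accurate.
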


\begin{lemma} \label{lemm:TypeD-1}
Let $\Phi$ be an irreducible root system of type $D_n$.
Let $1 \leq \a \leq i \leq n-1$. 
Assume that $a \leq n-2$. 
Then we have
\begin{align*}
\varpi_i \cdot \big( \varpi_\a \varpi_{\a+1} \cdots \varpi_{n-1} \big) =  \frac{n-i}{n-\a+1}  \varpi_{\a-1} \varpi_\a \cdots \varpi_{n-1} + d_i^{(1)} \, \varpi_\a \varpi_{\a+1} \cdots \varpi_{n}
\end{align*}
in $H^*(\Pet_{\Phi_{D_n}})$ where $d_i^{(1)} \ (\a \leq i \leq n-1)$ is defined by 
\begin{align*}
d_i^{(1)}=\begin{cases}
\displaystyle\frac{2(i-\a+1)}{n-\a+1} \ \ \ &{\rm if} \ a \leq i \leq n-2, \\
\displaystyle\frac{n-\a-1}{n-\a+1} \ \ \ &{\rm if} \ i = n-1.
\end{cases}
\end{align*}
In particular, the second summand of the right hand side gives the value of $m_{i,K}^J$ when $(\Phi_K,\Phi_J)$ is of type $(A_{\j-1},D_{\j})$ in Table~$\ref{tab:A list of values}$. 
\end{lemma}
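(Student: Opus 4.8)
The plan is to mirror the inductive argument used for Lemma~\ref{lemm:TypeB-1}, working directly inside $H^*(\Pet_{\Phi_{D_n}})$ through the presentation of Theorem~\ref{theorem:presentation_Pet}. Reading the $D_n$ Cartan matrix off the Dynkin diagram (the fork node being $\alpha_{n-2}$), the relations $\alpha_i\varpi_i=0$ specialize at the three relevant nodes to
\begin{align*}
\varpi_{n-1}^2=\tfrac{1}{2}\varpi_{n-2}\varpi_{n-1},\quad
\varpi_n^2=\tfrac{1}{2}\varpi_{n-2}\varpi_n,\quad
\varpi_{n-2}^2=\tfrac{1}{2}\big(\varpi_{n-3}\varpi_{n-2}+\varpi_{n-2}\varpi_{n-1}+\varpi_{n-2}\varpi_n\big).
\end{align*}
These three identities, together with the type $A$ results, are the only ambient facts the argument needs.

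I would induct on $n-a$, taking the base case to be $a=n-2$ (rather than $a=n-1$ as in type $B$), as forced by the fork. There $\{a,\dots,n-1\}=\{n-2,n-1\}$ and only $i\in\{n-2,n-1\}$ occur; the case $i=n-2$ follows from the third relation above after clearing the resulting proportional $\varpi_{n-2}^2\varpi_{n-1}$ term, and $i=n-1$ then follows from $\varpi_{n-1}^2=\tfrac{1}{2}\varpi_{n-2}\varpi_{n-1}$ combined with the $i=n-2$ identity. Both match $\tfrac{n-i}{n-a+1}$ and the stated $d_i^{(1)}$.

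For the inductive step I would first treat $i=a$. Writing $\varpi_a^2(\varpi_{a+1}\cdots\varpi_{n-1})=\big[\varpi_a\cdot(\varpi_a\varpi_{a+1}\cdots\varpi_{n-2})\big]\varpi_{n-1}$, I expand the inner product by Lemma~\ref{lemm:computationanytype1}. Since the string $\{a,\dots,n-2\}$ and each of its one-element enlargements is of type $A$, Lemma~\ref{lemm:computationanytype1.5} lets me read off the coefficients from the type $(A_{\j-1},A_\j)$ entry of Table~\ref{tab:A list of values} (equivalently from Lemma~\ref{lemm:TypeA} and Example~\ref{ex:m_iKJ type A}). After multiplying by $\varpi_{n-1}$, two of the resulting monomials are already square-free, namely $\varpi_{a-1}\cdots\varpi_{n-1}$ and $\varpi_a\cdots\varpi_n$, while the middle one, $\varpi_a\cdot(\varpi_{a+1}\cdots\varpi_{n-2}\varpi_{n-1}^2)=\varpi_a\cdot\big(\varpi_{n-1}\cdot(\varpi_{a+1}\cdots\varpi_{n-1})\big)$, is reduced by the inductive hypothesis (the lemma at $a+1$ with $i=n-1$). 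This produces a term proportional to $\varpi_a^2(\varpi_{a+1}\cdots\varpi_{n-1})$ itself; collecting it and clearing the factor $1-\tfrac{1}{(n-a)^2}=\tfrac{(n-a-1)(n-a+1)}{(n-a)^2}$ yields $\varpi_a^2(\varpi_{a+1}\cdots\varpi_{n-1})=\tfrac{n-a}{n-a+1}\varpi_{a-1}\cdots\varpi_{n-1}+\tfrac{2}{n-a+1}\varpi_a\cdots\varpi_n$, which is the claim at $i=a$. For $a<i\le n-1$ I would peel off $\varpi_a$ on the left, apply the inductive hypothesis to $\varpi_i\cdot(\varpi_{a+1}\cdots\varpi_{n-1})$, and substitute the $i=a$ identity just obtained; a short rational simplification, split into the subcases $i\le n-2$ and $i=n-1$, recovers $\tfrac{n-i}{n-a+1}$ and the stated $d_i^{(1)}$.

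The step I expect to be the main obstacle is the intermediate type $A$ expansion of $\varpi_a\cdot(\varpi_a\cdots\varpi_{n-2})$. Unlike type $B$, where the endpoint $\alpha_{n-2}$ has a single neighbour beyond the string, here $\alpha_{n-2}$ is the fork node adjacent to both $\alpha_{n-1}$ and $\alpha_n$, so there are \emph{three} connected enlargements, $\{a-1,\dots,n-2\}$, $\{a,\dots,n-1\}$, and $\{a,\dots,n-2,n\}$, rather than two. I must justify via Lemma~\ref{lemm:computationanytype1.5} that each of the two right-enlargements carries the same type $A$ coefficient $\tfrac{1}{n-a}$, and then keep careful track of the extra $\alpha_n$-branch, which is precisely the monomial that ultimately supplies the $\varpi_a\cdots\varpi_n$ contribution. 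Once this bookkeeping is in place, the remaining manipulations are the same routine rational-function identities as in Lemma~\ref{lemm:TypeB-1}, and the analogous statements for $C_n$ and for the remaining $D_n$-cases are handled by the same scheme with the corresponding endpoint relations.
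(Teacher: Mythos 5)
Your proof is correct and is essentially the paper's own argument: the paper omits the details for type $D_n$, saying only that one argues ``by a similar argument of Lemmas~\ref{lemm:TypeB-1} and \ref{lemm:TypeB-2} with suitable modifications'' and that ``the base case in type $D_n$ is $a=n-2$,'' and your induction on $n-a$ (base case $a=n-2$ via the relations $\alpha_{n-2}\varpi_{n-2}=\alpha_{n-1}\varpi_{n-1}=0$, then the case $i=a$, then $a<i\le n-1$ by peeling off $\varpi_a$) is precisely that argument with the details supplied. The one genuinely new feature you identify --- the fork at $\alpha_{n-2}$ giving \emph{three} connected enlargements, with the $\alpha_n$-branch contributing the square-free $\varpi_a\cdots\varpi_n$ term --- is handled correctly, and the coefficient bookkeeping (clearing $1-\tfrac{1}{(n-a)^2}$ at $i=a$, then the two rational simplifications for $i\le n-2$ and $i=n-1$) reproduces exactly the stated values $\tfrac{n-i}{n-a+1}$ and $d_i^{(1)}$.
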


\begin{lemma} \label{lemm:TypeD-2}
Let $\Phi$ be an irreducible root system of type $D_n$.
Let $1 \leq \a \leq i \leq n$. 
Assume that $a \leq n-2$. 
Then we have
\begin{align*}
\varpi_i \cdot \big( \varpi_\a \varpi_{\a+1} \cdots \varpi_{n} \big) &= d_i^{(2)} \varpi_{\a-1} \varpi_\a \cdots \varpi_{n}  
\end{align*}
in $H^*(\Pet_{\Phi_{D_n}})$ where $d_i^{(2)} \ (\a \leq i \leq n-1)$ is defined by 
\begin{align*}
d_i^{(2)}=\begin{cases}
1 \ \ \ &{\rm if} \ a \leq i \leq n-2, \\
\displaystyle\frac{1}{2} \ \ \ &{\rm if} \ i = n-1, n.
\end{cases}
\end{align*}
In particular, this formula derives the value of $m_{i,K}^J$ when $(\Phi_K,\Phi_J)$ is of type $(D_{\j-1},D_{\j})$ in Table~$\ref{tab:A list of values}$. 
\end{lemma}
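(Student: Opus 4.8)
The plan is to prove Lemma~\ref{lemm:TypeD-2} by induction on $n-\a$, mirroring the proof of Lemma~\ref{lemm:TypeB-2} almost verbatim; the one genuinely new ingredient is the base case, which must be taken at $\a=n-2$ rather than $\a=n$ because the trivalent vertex of the $D_n$ diagram couples the three tail classes $\varpi_{n-2},\varpi_{n-1},\varpi_n$. Throughout I would use Theorem~\ref{theorem:presentation_Pet} together with the fact that the transformation matrix from the simple roots to the fundamental weights is the Cartan matrix of $D_n$, so that the relevant quadratic relations $\varpi_i\alpha_i=0$ are
\begin{align*}
\varpi_{n-1}^2=\tfrac12\varpi_{n-2}\varpi_{n-1},\qquad
\varpi_n^2=\tfrac12\varpi_{n-2}\varpi_n,\qquad
\varpi_{n-2}^2=\tfrac12\big(\varpi_{n-3}\varpi_{n-2}+\varpi_{n-2}\varpi_{n-1}+\varpi_{n-2}\varpi_n\big).
\end{align*}

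For the base case $\a=n-2$ I would compute $\varpi_i\cdot(\varpi_{n-2}\varpi_{n-1}\varpi_n)$ for $i=n-2,n-1,n$ by substituting these relations into one another. For instance, $\varpi_n\cdot(\varpi_{n-2}\varpi_{n-1}\varpi_n)=\tfrac12\varpi_{n-2}^2\varpi_{n-1}\varpi_n$, and expanding the remaining $\varpi_{n-2}^2$ by the fork relation produces the two monomials $\varpi_{n-2}\varpi_{n-1}^2\varpi_n$ and $\varpi_{n-2}\varpi_{n-1}\varpi_n^2$, each of which folds back to $\tfrac12\varpi_{n-2}^2\varpi_{n-1}\varpi_n$ by the first two relations; solving the resulting proportionality yields $\varpi_{n-2}^2\varpi_{n-1}\varpi_n=\varpi_{n-3}\varpi_{n-2}\varpi_{n-1}\varpi_n$ and hence $d_n^{(2)}=\tfrac12$. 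The analogous manipulations give $d_{n-1}^{(2)}=\tfrac12$ and $d_{n-2}^{(2)}=1$, establishing the base case.

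For the inductive step I would assume the statement for every starting index larger than $\a$ (still at most $n-2$) and treat the two cases as in Lemma~\ref{lemm:TypeB-2}. When $i=\a$, I split off the final factor, $\varpi_\a^2(\varpi_{\a+1}\cdots\varpi_n)=\big(\varpi_\a^2(\varpi_{\a+1}\cdots\varpi_{n-1})\big)\varpi_n$, rewrite the inner product by Lemma~\ref{lemm:TypeD-1} (whose coefficient $d_\a^{(1)}=\tfrac{2}{n-\a+1}$ appears since $\a\le n-2$), and then reduce the surviving term $\varpi_\a\varpi_{\a+1}\cdots\varpi_{n-1}\varpi_n^2=\varpi_\a\cdot\varpi_n(\varpi_{\a+1}\cdots\varpi_n)$ via the inductive hypothesis at $i=n$, which contributes the factor $\tfrac12$. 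As the left-hand side is then proportional to this term, the equation solves to give $\varpi_\a^2(\varpi_{\a+1}\cdots\varpi_n)=\varpi_{\a-1}\varpi_\a\cdots\varpi_n$, i.e.\ $d_\a^{(2)}=1$. When $\a<i\le n$, I peel off the leading $\varpi_\a$, apply the inductive hypothesis to $(\varpi_{\a+1}\cdots\varpi_{i-1})\varpi_i^2(\varpi_{i+1}\cdots\varpi_n)$, and conclude using the just-proved $i=\a$ identity. The final assertion that $d_i^{(2)}$ computes $m_{i,K}^J$ for $(\Phi_K,\Phi_J)$ of type $(D_{\j-1},D_\j)$ is then immediate, since by Lemma~\ref{lemm:computationanytype1} the coefficient of $\varpi_{\a-1}\cdots\varpi_n$ is exactly $m_{i,K}^J$.

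The hard part is the base case. In type $B$ the base reduces to the single relation $\varpi_n^2=\tfrac12\varpi_{n-1}\varpi_n$, but here the trivalent vertex intertwines all three tail relations, so one must disentangle a small linear system among the monomials $\varpi_{n-3}\varpi_{n-2}\varpi_{n-1}\varpi_n$ and $\varpi_{n-2}^2\varpi_{n-1}\varpi_n$ before the recursion can begin. Once the three base values $d_{n-2}^{(2)},d_{n-1}^{(2)},d_n^{(2)}$ are pinned down, the inductive bookkeeping transfers directly from the type $B$ argument of Lemma~\ref{lemm:TypeB-2}.
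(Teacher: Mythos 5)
Your proposal is correct and follows exactly the route the paper intends: the paper omits the details of Lemma~\ref{lemm:TypeD-2}, stating only that it follows by the argument of Lemmas~\ref{lemm:TypeB-1} and \ref{lemm:TypeB-2} ``with suitable modifications'' and that the base case in type $D_n$ is $a=n-2$, which is precisely what you carry out (your fork relations, the proportionality argument giving $\varpi_{n-2}^2\varpi_{n-1}\varpi_n=\varpi_{n-3}\varpi_{n-2}\varpi_{n-1}\varpi_n$, and the inductive step via Lemma~\ref{lemm:TypeD-1} all check out). No gaps.
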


\smallskip

\subsection{Exceptional types} 

By using the fundamental relations $\alpha_i \varpi_i=0 \ (1 \leq i \leq n)$ in $H^*(\Pet_\Phi)$,
one can compute the values of $m_{i,K}^J$ for exceptional types by some elementary algebraic manipulations. 
We sketch this for only \eqref{eq:F-1} in Lemma~\ref{lemm:TypeF} below.

\begin{lemma} \label{lemm:TypeG}
Let $\Phi$ be an irreducible root system of type $G_2$.
We have
\begin{align*}
\varpi_1 \cdot \varpi_1 =  \frac{1}{2} \varpi_1 \varpi_2 \ {\rm and} \ \varpi_2 \cdot \varpi_2 =  \frac{3}{2} \varpi_1 \varpi_2 
\end{align*}
in $H^*(\Pet_{\Phi_{G_2}})$.
In particular, these formulas give the value of $m_{i,K}^J$ when $(\Phi_K,\Phi_J)$ is of type $(A_1,G_2){\rm \mathchar`-(i)}$ and $(A_1,G_2){\rm \mathchar`-(ii)}$ in Table~$\ref{tab:A list of values}$, respectively. 
\end{lemma}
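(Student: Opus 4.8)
The plan is to invoke the ring presentation from Theorem~\ref{theorem:presentation_Pet}, which identifies $H^*(\Pet_{\Phi_{G_2}})$ with $\mathcal{R}/(\alpha_1\varpi_1, \alpha_2\varpi_2)$. Because every defining relation of this quotient is one of the quadratic forms $\alpha_i\varpi_i$, the whole computation collapses to rewriting the two relations $\alpha_1\varpi_1=0$ and $\alpha_2\varpi_2=0$ once the simple roots are expressed in terms of the fundamental weights.

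First I would record the Cartan matrix of $G_2$ in the ordering fixed in Figure~\ref{pic: Dynkin diagrams}. Using the fact recalled at the beginning of this appendix, that the transformation matrix from $(\alpha_1,\alpha_2)$ to $(\varpi_1,\varpi_2)$ is the Cartan matrix, this yields $\alpha_1 = 2\varpi_1 - \varpi_2$ and $\alpha_2 = -3\varpi_1 + 2\varpi_2$. The only point needing care is to pin down which node carries the short root, so that the off-diagonal entries $-1$ and $-3$ land in the correct slots; this is forced by the chosen ordering, with $\alpha_1$ short and $\alpha_2$ long.

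Next I would substitute these expressions into the two fundamental relations. The relation $\alpha_1\varpi_1=0$ becomes $2\varpi_1^2 - \varpi_1\varpi_2 = 0$, which rearranges to $\varpi_1^2 = \frac{1}{2}\varpi_1\varpi_2$; the relation $\alpha_2\varpi_2=0$ becomes $-3\varpi_1\varpi_2 + 2\varpi_2^2 = 0$, giving $\varpi_2^2 = \frac{3}{2}\varpi_1\varpi_2$. These are exactly the two asserted identities.

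Finally I would observe that, via Lemma~\ref{lemm:computationanytype1} with $K=\{\alpha_1\}$ and $K=\{\alpha_2\}$ respectively (and $J=\{\alpha_1,\alpha_2\}$), the coefficients $\frac{1}{2}$ and $\frac{3}{2}$ are precisely the entries of Table~\ref{tab:A list of values} for the two rows of type $(A_1,G_2)$. There is no real obstacle in this lemma: granting the presentation of Theorem~\ref{theorem:presentation_Pet}, the result is a one-line linear-algebra manipulation, and the sole thing to verify carefully is the sign and ordering convention in the $G_2$ Cartan matrix.
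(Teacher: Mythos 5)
Your proposal is correct and is exactly the argument the paper intends: the appendix proves these exceptional-type lemmas by substituting the Cartan-matrix expressions $\alpha_i=\sum_j C_{ij}\varpi_j$ into the relations $\alpha_i\varpi_i=0$ of Theorem~\ref{theorem:presentation_Pet} (the paper writes this out only for $F_4$ in Lemma~\ref{lemm:TypeF} and leaves $G_2$ as the same manipulation). Your expressions $\alpha_1=2\varpi_1-\varpi_2$, $\alpha_2=-3\varpi_1+2\varpi_2$ match the Humphreys ordering used throughout the paper, and the resulting coefficients $\tfrac{1}{2}$ and $\tfrac{3}{2}$ agree with Table~\ref{tab:A list of values}.
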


\begin{lemma} \label{lemm:TypeF}
Let $\Phi$ be an irreducible root system of type $F_4$.
We have
\begin{align}\label{eq:F-1}
 \begin{cases}
\varpi_1 \cdot (\varpi_1\varpi_2\varpi_3) =  \varpi_1 \varpi_2 \varpi_3 \varpi_4, \\
\varpi_2 \cdot (\varpi_1\varpi_2\varpi_3) =  2 \varpi_1 \varpi_2 \varpi_3 \varpi_4, \\
\varpi_3 \cdot (\varpi_1\varpi_2\varpi_3) =  \frac{3}{2} \varpi_1 \varpi_2 \varpi_3 \varpi_4,  \end{cases}
\end{align}
\begin{align}\label{eq:F-2}
 \begin{cases}
\varpi_2 \cdot (\varpi_2\varpi_3\varpi_4) &=  \frac{3}{2} \varpi_1 \varpi_2 \varpi_3 \varpi_4, \\
\varpi_3 \cdot (\varpi_2\varpi_3\varpi_4) &=  \varpi_1 \varpi_2 \varpi_3 \varpi_4, \\
\varpi_4 \cdot (\varpi_2\varpi_3\varpi_4) &=  \frac{1}{2} \varpi_1 \varpi_2 \varpi_3 \varpi_4, 
 \end{cases}
\end{align}
in $H^*(\Pet_{\Phi_{F_4}})$.
In particular, these formulas~\eqref{eq:F-1} and \eqref{eq:F-2} give the value of $m_{i,K}^J$ when $(\Phi_K,\Phi_J)$ is of type $(B_3,F_4)$ and $(C_3,F_4)$ in Table~$\ref{tab:A list of values}$, respectively. 
\end{lemma}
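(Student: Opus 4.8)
The plan is to exploit the presentation of $H^{*}(\Pet_{\Phi_{F_4}})$ given in Theorem~\ref{theorem:presentation_Pet} together with the fact that every product occurring in \eqref{eq:F-1} and \eqref{eq:F-2} is a top-degree class. First I would express each simple root of $F_4$ in terms of the fundamental weights through the Cartan matrix, using the same sign convention employed in the proof of Lemma~\ref{lemm:TypeB-1}; this gives $\alpha_1 = 2\varpi_1-\varpi_2$, $\alpha_2=-\varpi_1+2\varpi_2-2\varpi_3$, $\alpha_3=-\varpi_2+2\varpi_3-\varpi_4$, and $\alpha_4=-\varpi_3+2\varpi_4$. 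Substituting these into the defining relations $\alpha_i\varpi_i=0$ of Theorem~\ref{theorem:presentation_Pet} yields the four quadratic identities
\begin{align*}
\varpi_1^2=\tfrac12\varpi_1\varpi_2,\qquad \varpi_2^2=\tfrac12\varpi_1\varpi_2+\varpi_2\varpi_3,\qquad \varpi_3^2=\tfrac12\varpi_2\varpi_3+\tfrac12\varpi_3\varpi_4,\qquad \varpi_4^2=\tfrac12\varpi_3\varpi_4
\end{align*}
in $H^{*}(\Pet_{\Phi_{F_4}})$, which are the only input the argument will require.

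Next I would observe that each monomial in \eqref{eq:F-1} and \eqref{eq:F-2} has degree $4=\rank(\Phi_{F_4})$ and hence lies in $H^{8}(\Pet_{\Phi_{F_4}})$. By Theorem~\ref{Theorem_Peterson_basis} this group is one-dimensional, since $\Sigma$ is the unique connected subset of size $4$; and by Theorem~\ref{theorem:GiambelliPet} together with \eqref{eq:PetersonSchubert-varpi} it is spanned by $T\coloneqq\varpi_1\varpi_2\varpi_3\varpi_4$, which is a nonzero multiple of $p_{v_{\Sigma}}$. Consequently each of the six products is a priori a rational multiple of $T$, and the entire content of the lemma reduces to pinning down these six scalars.

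To compute a scalar I would repeatedly substitute the four quadratic identities to rewrite the given monomial in terms of $T$. For the first line of \eqref{eq:F-1}, writing $X\coloneqq\varpi_1^2\varpi_2\varpi_3$ and applying in turn the relations for $\varpi_1^2$, then $\varpi_2^2$, then $\varpi_3^2$, one is led to an identity of the shape $X=\tfrac34 X+\tfrac14 T$, whence $X=T$; the very monomial being computed reappears on the right, so the scalar is extracted by solving one linear equation rather than by a terminating cascade. The remaining five products are treated identically, each collapsing to a linear equation whose solution yields the coefficients $1,2,\tfrac32$ in \eqref{eq:F-1} and $\tfrac32,1,\tfrac12$ in \eqref{eq:F-2}. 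Finally, reading off the relevant summand in each identity and comparing with the definition \eqref{eq:miKJ} identifies the entries of Table~\ref{tab:A list of values} in types $(B_3,F_4)$ and $(C_3,F_4)$.

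The main obstacle is precisely the self-referential nature of these reductions: substituting a quadratic relation into a degree-$4$ product may reproduce the target monomial, or another degree-$4$ monomial that must itself be reduced, so one cannot simply normal-order but must keep track of the finitely many degree-$4$ monomials and solve the resulting small linear system. Because $H^{8}(\Pet_{\Phi_{F_4}})$ is one-dimensional, any two reduction routes must agree, which supplies a built-in consistency check and guarantees that the linear equations are non-degenerate. An alternative would be to evaluate the coefficients $c_{i,K}^{J}$ directly via Theorem~\ref{theorem:MonkPet}, but that route requires computing the Billey-type polynomials $p_v(w)$ for $F_4$ and is more laborious than the purely algebraic manipulation proposed here.
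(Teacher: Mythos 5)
Your proposal is correct and follows essentially the same route as the paper's proof: you invoke Theorem~\ref{theorem:presentation_Pet} with the identical Cartan-matrix expressions for $\alpha_1,\ldots,\alpha_4$, derive the same four quadratic relations, and solve the resulting self-referential linear equations among degree-$4$ monomials (the paper's version of your $X=\tfrac34 X+\tfrac14 T$ step is $\varpi_1\varpi_2^2\varpi_3=\tfrac34\varpi_1\varpi_2^2\varpi_3+\tfrac12\varpi_1\varpi_2\varpi_3\varpi_4$). The only cosmetic difference is your explicit appeal to the one-dimensionality of the top cohomology via Theorem~\ref{Theorem_Peterson_basis}, which the paper leaves implicit since the substitutions pin down each scalar directly.
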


\begin{proof}
Since $\alpha_1 = 2\varpi_1-\varpi_2, \alpha_2 = -\varpi_1+2\varpi_2-2\varpi_3$, $\alpha_3=-\varpi_2+2\varpi_3-\varpi_4$, and $\alpha_4=-\varpi_3+2\varpi_4$, we have
\begin{align*}
\varpi_1 (2\varpi_1-\varpi_2) &= \varpi_1 \alpha_1 =0, \\
\varpi_2 (-\varpi_1+2\varpi_2-2\varpi_3) &= \varpi_2 \alpha_2 =0, \\ 
\varpi_3 (-\varpi_2+2\varpi_3-\varpi_4) &= \varpi_3 \alpha_3 =0, \\
\varpi_4 (-\varpi_3+2\varpi_4) &= \varpi_4 \alpha_4 =0, 
\end{align*}
by Theorem~\ref{theorem:presentation_Pet}.
These equalities can be written as
\begin{align}
\varpi_1^2 &= \frac{1}{2}\varpi_1\varpi_2, \label{eq:1proofTypeF} \\
\varpi_2^2 &= \frac{1}{2}\varpi_1\varpi_2 + \varpi_2\varpi_3, \label{eq:2proofTypeF} \\ 
\varpi_3^2 &= \frac{1}{2}\varpi_2\varpi_3 + \frac{1}{2}\varpi_3\varpi_4, \label{eq:3proofTypeF} \\
\varpi_4^2 &= \frac{1}{2}\varpi_3\varpi_4. \label{eq:3.5proofTypeF}
\end{align}
We then have 
\begin{align}
\varpi_1^2 \varpi_2 \varpi_3 &= \frac{1}{2}\varpi_1 \varpi_2^2 \varpi_3, \label{eq:4proofTypeF} \\
\varpi_1 \varpi_2^2 \varpi_3 &= \frac{1}{2}\varpi_1^2 \varpi_2 \varpi_3 + \varpi_1 \varpi_2 \varpi_3^2, \label{eq:5proofTypeF} \\
\varpi_1 \varpi_2 \varpi_3^2 &= \frac{1}{2}\varpi_1\varpi_2^2\varpi_3 + \frac{1}{2}\varpi_1\varpi_2\varpi_3\varpi_4, \label{eq:6proofTypeF}
\end{align}
by \eqref{eq:1proofTypeF}, \eqref{eq:2proofTypeF}, and \eqref{eq:3proofTypeF}.
Hence, we obtain
\begin{align*} 
\varpi_1 \varpi_2^2 \varpi_3 &= \frac{1}{2}\varpi_1^2 \varpi_2 \varpi_3 + \varpi_1 \varpi_2 \varpi_3^2 \ \ \ \text{(by \eqref{eq:5proofTypeF})} \notag \\
&= \frac{1}{2}\varpi_1^2 \varpi_2 \varpi_3 + \frac{1}{2}\varpi_1\varpi_2^2\varpi_3 + \frac{1}{2}\varpi_1\varpi_2\varpi_3\varpi_4 \ \ \ \text{(by \eqref{eq:6proofTypeF})} \notag \\
&=\frac{1}{4}\varpi_1 \varpi_2^2 \varpi_3 + \frac{1}{2}\varpi_1\varpi_2^2\varpi_3 + \frac{1}{2}\varpi_1\varpi_2\varpi_3\varpi_4 \ \ \ \text{(by \eqref{eq:4proofTypeF})} \notag \\
&=\frac{3}{4}\varpi_1 \varpi_2^2 \varpi_3 + \frac{1}{2}\varpi_1\varpi_2\varpi_3\varpi_4. \notag
\end{align*}
This can be written as
\begin{align*} 
\varpi_1 \varpi_2^2 \varpi_3 = 2 \varpi_1\varpi_2\varpi_3\varpi_4.
\end{align*}
It also follows from \eqref{eq:4proofTypeF} and \eqref{eq:6proofTypeF} that 
\begin{align*} 
\varpi_1^2 \varpi_2 \varpi_3 &= \varpi_1\varpi_2\varpi_3\varpi_4, \\
\varpi_1 \varpi_2 \varpi_3^2 &= \frac{3}{2} \varpi_1\varpi_2\varpi_3\varpi_4,
\end{align*}
which shows \eqref{eq:F-1}.
A similar manipulation yields \eqref{eq:F-2}. 
\end{proof}

\begin{lemma} \label{lemm:TypeE6}
Let $\Phi$ be an irreducible root system of type $E_6$. 
Then we have
\begin{align}\label{eq:E6-1}
 \begin{cases}
\varpi_1 \cdot (\varpi_1\varpi_3\varpi_4\varpi_5\varpi_6) &=  \frac{1}{2} \varpi_1 \varpi_2 \varpi_3 \varpi_4\varpi_5\varpi_6, \\
\varpi_3 \cdot (\varpi_1\varpi_3\varpi_4\varpi_5\varpi_6) &=  \varpi_1 \varpi_2 \varpi_3 \varpi_4\varpi_5\varpi_6, \\
\varpi_4 \cdot (\varpi_1\varpi_3\varpi_4\varpi_5\varpi_6) &=  \frac{3}{2} \varpi_1 \varpi_2 \varpi_3 \varpi_4\varpi_5\varpi_6, \\
\varpi_5 \cdot (\varpi_1\varpi_3\varpi_4\varpi_5\varpi_6) &=  \varpi_1 \varpi_2 \varpi_3 \varpi_4\varpi_5\varpi_6, \\
\varpi_6 \cdot (\varpi_1\varpi_3\varpi_4\varpi_5\varpi_6) &=  \frac{1}{2} \varpi_1 \varpi_2 \varpi_3 \varpi_4\varpi_5\varpi_6,   
 \end{cases}
\end{align}
\begin{align}\label{eq:E6-2}
 \begin{cases}
\varpi_1 \cdot (\varpi_1\varpi_2\varpi_3\varpi_4\varpi_5) &=  \frac{1}{2} \varpi_1 \varpi_2 \varpi_3 \varpi_4\varpi_5\varpi_6, \\
\varpi_2 \cdot (\varpi_1\varpi_2\varpi_3\varpi_4\varpi_5) &=  \frac{3}{4} \varpi_1 \varpi_2 \varpi_3 \varpi_4\varpi_5\varpi_6, \\
\varpi_3 \cdot (\varpi_1\varpi_2\varpi_3\varpi_4\varpi_5) &=  \varpi_1 \varpi_2 \varpi_3 \varpi_4\varpi_5\varpi_6, \\
\varpi_4 \cdot (\varpi_1\varpi_2\varpi_3\varpi_4\varpi_5) &=  \frac{3}{2} \varpi_1 \varpi_2 \varpi_3 \varpi_4\varpi_5\varpi_6, \\
\varpi_5 \cdot (\varpi_1\varpi_2\varpi_3\varpi_4\varpi_5) &=  \frac{5}{4} \varpi_1 \varpi_2 \varpi_3 \varpi_4\varpi_5\varpi_6,  
 \end{cases}
\end{align}
in $H^*(\Pet_{\Phi_{E_6}})$.
In particular, these formulas~\eqref{eq:E6-1} and \eqref{eq:E6-2} give the value of $m_{i,K}^J$ when $(\Phi_K,\Phi_J)$ is of type $(A_5,E_6)$ and $(D_5,E_6)$ in Table~$\ref{tab:A list of values}$, respectively. 
\end{lemma}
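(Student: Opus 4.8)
The plan is to imitate the proof of Lemma~\ref{lemm:TypeF} verbatim, now for the rank-six simply-laced diagram $E_6$. By Theorem~\ref{theorem:presentation_Pet} the ring $H^*(\Pet_{\Phi_{E_6}})$ is the quotient $\mathcal{R}/(\alpha_i\varpi_i \mid 1\le i\le 6)$, and since $E_6$ is simply laced each simple root is $\alpha_i = 2\varpi_i - \sum_{j\sim i}\varpi_j$, where $j\sim i$ means adjacency in the Dynkin diagram of Figure~\ref{pic: Dynkin diagrams}. First I would record the six quadratic relations obtained from $\alpha_i\varpi_i=0$:
\begin{align*}
\varpi_1^2 &= \tfrac{1}{2}\varpi_1\varpi_3, & \varpi_2^2 &= \tfrac{1}{2}\varpi_2\varpi_4, & \varpi_3^2 &= \tfrac{1}{2}(\varpi_1\varpi_3+\varpi_3\varpi_4), \\
\varpi_4^2 &= \tfrac{1}{2}(\varpi_2\varpi_4+\varpi_3\varpi_4+\varpi_4\varpi_5), & \varpi_5^2 &= \tfrac{1}{2}(\varpi_4\varpi_5+\varpi_5\varpi_6), & \varpi_6^2 &= \tfrac{1}{2}\varpi_5\varpi_6.
\end{align*}
These are the only input needed; everything below is formal manipulation inside the graded ring.

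Next I would note that $\Pet_{\Phi_{E_6}}$ has complex dimension $6$, so the top piece of $H^*(\Pet_{\Phi_{E_6}})$ is one-dimensional and is spanned by $\varpi_1\varpi_2\varpi_3\varpi_4\varpi_5\varpi_6$, which is nonzero by Proposition~\ref{prop:Top_Class}. Hence each of the ten products in \eqref{eq:E6-1} and \eqref{eq:E6-2} is automatically a scalar multiple of this top monomial, and the task is only to pin down the scalar. To do so I would, exactly as in Lemma~\ref{lemm:TypeF}, repeatedly substitute the six relations above into the single-square degree-six monomials that arise. For the $A_5$-formulas \eqref{eq:E6-1} the starting monomials $\varpi_i\cdot(\varpi_1\varpi_3\varpi_4\varpi_5\varpi_6)$ with $i\in\{1,3,4,5,6\}$ carry no $\varpi_2$, and the weight $\varpi_2$ is forced to appear only through the branch relation $\varpi_4^2=\tfrac12(\varpi_2\varpi_4+\varpi_3\varpi_4+\varpi_4\varpi_5)$, so the reductions must be steered until a factor $\varpi_4^2$ is produced. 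For the $D_5$-formulas \eqref{eq:E6-2} the starting monomials $\varpi_i\cdot(\varpi_1\varpi_2\varpi_3\varpi_4\varpi_5)$ already carry $\varpi_2$ and instead need $\varpi_6$ introduced via $\varpi_5^2=\tfrac12(\varpi_4\varpi_5+\varpi_5\varpi_6)$.

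I would also exploit the order-two automorphism of the $E_6$ diagram interchanging $(\alpha_1,\alpha_3)\leftrightarrow(\alpha_6,\alpha_5)$ and fixing $\alpha_2,\alpha_4$: it preserves the defining relations and the top monomial, and since it stabilizes $K=\{\alpha_1,\alpha_3,\alpha_4,\alpha_5,\alpha_6\}$ it identifies the $\varpi_1$- and $\varpi_6$-lines (and the $\varpi_3$- and $\varpi_5$-lines) of \eqref{eq:E6-1}, halving the work and explaining the symmetric coefficients $\tfrac12,1,\tfrac32,1,\tfrac12$. This symmetry does not stabilize the $D_5$ subset, so \eqref{eq:E6-2} must be done in full.

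The main obstacle is bookkeeping rather than conceptual: unlike the linear $F_4$ chain, the substitutions here feed back on one another, since applying a relation to one squared factor creates new squared factors elsewhere, and naive rewriting can loop. The clean way to avoid this is to treat the finitely many single-square degree-six monomials as unknowns, turn each relation (multiplied by the appropriate square-free complement) into a linear equation among them, and solve the resulting small linear system; one-dimensionality of the top degree guarantees this system determines every scalar uniquely. Reading off the coefficient of $\varpi_1\varpi_2\varpi_3\varpi_4\varpi_5\varpi_6$ then yields \eqref{eq:E6-1} and \eqref{eq:E6-2}, and comparison with \eqref{eq:miKJ} gives the tabulated values of $m_{i,K}^J$ for types $(A_5,E_6)$ and $(D_5,E_6)$.
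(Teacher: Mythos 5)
Your proposal is correct and follows essentially the paper's own route: the paper derives all the exceptional-type lemmas from the presentation in Theorem~\ref{theorem:presentation_Pet} by elementary manipulation of the quadratic relations $\alpha_i\varpi_i=0$ (writing out details only for $F_4$ in Lemma~\ref{lemm:TypeF}), and your six $E_6$ relations are exactly the right ones. The only imprecision is the assertion that one-dimensionality of the top degree \emph{guarantees} your linear system determines each scalar — it guarantees only consistency — but this is harmless here, since actually performing the elimination (aided by your diagram-involution symmetry) does determine every single-square degree-six monomial uniquely and reproduces the stated coefficients.
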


\begin{lemma} \label{lemm:TypeE7}
Let $\Phi$ be an irreducible root system of type $E_7$. 
Then we have
\begin{align}\label{eq:E7-1}
 \begin{cases}
\varpi_1 \cdot (\varpi_1\varpi_3\varpi_4\varpi_5\varpi_6\varpi_7) &=  \frac{4}{7} \varpi_1 \varpi_2 \varpi_3 \varpi_4\varpi_5\varpi_6\varpi_7, \\
\varpi_3 \cdot (\varpi_1\varpi_3\varpi_4\varpi_5\varpi_6\varpi_7) &=  \frac{8}{7} \varpi_1 \varpi_2 \varpi_3 \varpi_4\varpi_5\varpi_6\varpi_7, \\
\varpi_4 \cdot (\varpi_1\varpi_3\varpi_4\varpi_5\varpi_6\varpi_7) &=  \frac{12}{7} \varpi_1 \varpi_2 \varpi_3 \varpi_4\varpi_5\varpi_6\varpi_7, \\
\varpi_5 \cdot (\varpi_1\varpi_3\varpi_4\varpi_5\varpi_6\varpi_7) &=  \frac{9}{7} \varpi_1 \varpi_2 \varpi_3 \varpi_4\varpi_5\varpi_6\varpi_7, \\
\varpi_6 \cdot (\varpi_1\varpi_3\varpi_4\varpi_5\varpi_6\varpi_7) &=  \frac{6}{7} \varpi_1 \varpi_2 \varpi_3 \varpi_4\varpi_5\varpi_6\varpi_7, \\
\varpi_7 \cdot (\varpi_1\varpi_3\varpi_4\varpi_5\varpi_6\varpi_7) &=  \frac{3}{7} \varpi_1 \varpi_2 \varpi_3 \varpi_4\varpi_5\varpi_6\varpi_7,
 \end{cases}
\end{align}
\begin{align}\label{eq:E7-2}
 \begin{cases}
\varpi_2 \cdot (\varpi_2\varpi_3\varpi_4\varpi_5\varpi_6\varpi_7) &=  \varpi_1 \varpi_2 \varpi_3 \varpi_4\varpi_5\varpi_6\varpi_7, \\
\varpi_3 \cdot (\varpi_2\varpi_3\varpi_4\varpi_5\varpi_6\varpi_7) &=  \frac{3}{2} \varpi_1 \varpi_2 \varpi_3 \varpi_4\varpi_5\varpi_6\varpi_7, \\
\varpi_4 \cdot (\varpi_2\varpi_3\varpi_4\varpi_5\varpi_6\varpi_7) &=  2 \varpi_1 \varpi_2 \varpi_3 \varpi_4\varpi_5\varpi_6\varpi_7, \\
\varpi_5 \cdot (\varpi_2\varpi_3\varpi_4\varpi_5\varpi_6\varpi_7) &=  \frac{3}{2} \varpi_1 \varpi_2 \varpi_3 \varpi_4\varpi_5\varpi_6\varpi_7, \\
\varpi_6 \cdot (\varpi_2\varpi_3\varpi_4\varpi_5\varpi_6\varpi_7) &=  \varpi_1 \varpi_2 \varpi_3 \varpi_4\varpi_5\varpi_6\varpi_7, \\ 
\varpi_7 \cdot (\varpi_2\varpi_3\varpi_4\varpi_5\varpi_6\varpi_7) &=  \frac{1}{2} \varpi_1 \varpi_2 \varpi_3 \varpi_4\varpi_5\varpi_6\varpi_7,  
 \end{cases}
\end{align}
\begin{align}\label{eq:E7-3}
 \begin{cases}
\varpi_1 \cdot (\varpi_1\varpi_2\varpi_3\varpi_4\varpi_5\varpi_6) &=  \frac{2}{3} \varpi_1 \varpi_2 \varpi_3 \varpi_4\varpi_5\varpi_6\varpi_7, \\
\varpi_2 \cdot (\varpi_1\varpi_2\varpi_3\varpi_4\varpi_5\varpi_6) &=  \varpi_1 \varpi_2 \varpi_3 \varpi_4\varpi_5\varpi_6\varpi_7, \\
\varpi_3 \cdot (\varpi_1\varpi_2\varpi_3\varpi_4\varpi_5\varpi_6) &=  \frac{4}{3} \varpi_1 \varpi_2 \varpi_3 \varpi_4\varpi_5\varpi_6\varpi_7, \\
\varpi_4 \cdot (\varpi_1\varpi_2\varpi_3\varpi_4\varpi_5\varpi_6) &=  2 \varpi_1 \varpi_2 \varpi_3 \varpi_4\varpi_5\varpi_6\varpi_7, \\
\varpi_5 \cdot (\varpi_1\varpi_2\varpi_3\varpi_4\varpi_5\varpi_6) &=  \frac{5}{3} \varpi_1 \varpi_2 \varpi_3 \varpi_4\varpi_5\varpi_6\varpi_7, \\
\varpi_6 \cdot (\varpi_1\varpi_2\varpi_3\varpi_4\varpi_5\varpi_6) &=  \frac{4}{3} \varpi_1 \varpi_2 \varpi_3 \varpi_4\varpi_5\varpi_6\varpi_7,    
 \end{cases}
\end{align}
in $H^*(\Pet_{\Phi_{E_7}})$.
In particular, these formulas~\eqref{eq:E7-1}, \eqref{eq:E7-2}, and \eqref{eq:E7-3} give the value of $m_{i,K}^J$ when $(\Phi_K,\Phi_J)$ is of type $(A_6,E_7)$, $(D_6,E_7)$, and $(E_6,E_7)$ in Table~$\ref{tab:A list of values}$, respectively. 
\end{lemma}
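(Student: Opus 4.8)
The plan is to compute every product in \eqref{eq:E7-1}, \eqref{eq:E7-2}, and \eqref{eq:E7-3} inside the presentation $H^*(\Pet_{\Phi_{E_7}}) \cong \mathcal{R}/(\alpha_i\varpi_i \mid 1 \le i \le 7)$ furnished by Theorem~\ref{theorem:presentation_Pet}, following the template of the proof of Lemma~\ref{lemm:TypeF}. First I would expand each simple root in the fundamental weights using the $E_7$ Cartan matrix for the ordering of Figure~\ref{pic: Dynkin diagrams} (so that $\alpha_1,\alpha_3,\alpha_4,\alpha_5,\alpha_6,\alpha_7$ form the chain and $\alpha_2$ is attached to $\alpha_4$). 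Since $E_7$ is simply laced, each relation $\alpha_i\varpi_i=0$ becomes $\varpi_i^2=\tfrac12\sum_{j\sim i}\varpi_i\varpi_j$; explicitly,
\begin{align*}
\varpi_1^2 &= \tfrac12\varpi_1\varpi_3, & \varpi_2^2 &= \tfrac12\varpi_2\varpi_4, & \varpi_7^2 &= \tfrac12\varpi_6\varpi_7, \\
\varpi_3^2 &= \tfrac12\varpi_1\varpi_3+\tfrac12\varpi_3\varpi_4, & \varpi_5^2 &= \tfrac12\varpi_4\varpi_5+\tfrac12\varpi_5\varpi_6, & \varpi_6^2 &= \tfrac12\varpi_5\varpi_6+\tfrac12\varpi_6\varpi_7, \\
\varpi_4^2 &= \tfrac12\varpi_2\varpi_4+\tfrac12\varpi_3\varpi_4+\tfrac12\varpi_4\varpi_5. & & & &
\end{align*}

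Each product to be computed is a degree-$7$ monomial in the $\varpi_i$, hence lies in the top graded piece $H^{14}(\Pet_{\Phi_{E_7}})$. Because the only square-free degree-$7$ monomial in seven variables is $\varpi_1\varpi_2\cdots\varpi_7$, any reduction that removes all repeated weights terminates at a scalar multiple of $\varpi_1\cdots\varpi_7$; moreover $\int_{\Pet_{\Phi_{E_7}}}\varpi_1\cdots\varpi_7\neq 0$ by Proposition~\ref{prop:Top_Class}, so this class is nonzero and $H^{14}$ is one-dimensional. Thus every product equals a scalar times $\varpi_1\cdots\varpi_7$, and only the scalar must be found. To pin it down I would substitute the quadratic relations to eliminate the unique repeated weight of each monomial, always reducing the repeated index nearest an end node of the diagram first (where $\varpi_1^2,\varpi_2^2,\varpi_7^2$ have the simplest form). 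A substitution that reproduces the monomial being reduced yields a proportionality among degree-$7$ monomials, which is collected and solved exactly as in \eqref{eq:5proofTypeF}--\eqref{eq:6proofTypeF} of the $F_4$ computation; iterating over the three families, which correspond to the three maximal connected rank-$6$ subsets $K$ of type $A_6$, $D_6$, and $E_6$, produces the tabulated coefficients.

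The main obstacle is purely organizational: the cascade of substitutions is markedly longer than in $F_4$, and one must check it is confluent, i.e.\ that the several linear relations among the non-square-free degree-$7$ monomials are mutually consistent and determine each scalar uniquely. Consistency is guaranteed a priori by the one-dimensionality of $H^{14}$, so in practice I would fix the endpoint-first reduction order for each family and solve the resulting small linear systems in sequence. Lemma~\ref{lemm:computationanytype1.5} supplies a useful cross-check, since the coefficients depend only on the Dynkin types $(\Phi_K,\Phi_J)$ and the vertex $\alpha_i$: the entries of \eqref{eq:E7-1} and \eqref{eq:E7-2} must coincide with the $A_6$- and $D_6$-rows of Table~\ref{tab:A list of values}, so that only the genuinely $E_7$-specific family \eqref{eq:E7-3} truly requires the full exceptional manipulation.
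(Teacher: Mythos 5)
Your core plan coincides with the paper's own proof: for the exceptional types the paper establishes these identities only by ``elementary algebraic manipulations'' with the relations $\alpha_i\varpi_i=0$ of Theorem~\ref{theorem:presentation_Pet}, sketched for $F_4$, and your $E_7$ setup is correct --- the seven quadratic relations $\varpi_i^2=\tfrac12\sum_{j\sim i}\varpi_i\varpi_j$ read off the simply-laced Cartan matrix are right, as is the reduction of each degree-$7$ monomial to a multiple of $\varpi_1\cdots\varpi_7$ by solving the proportionalities that arise, exactly as in \eqref{eq:5proofTypeF}--\eqref{eq:6proofTypeF}. One small citation slip: Proposition~\ref{prop:Top_Class} concerns the toric variety $X_\Phi$, not $\Pet_\Phi$; for the nonvanishing of $\varpi_1\cdots\varpi_7$ in $H^{14}(\Pet_{\Phi_{E_7}})$ you should instead invoke Theorem~\ref{theorem:main_any_type}~(3), or Theorems~\ref{Theorem_Peterson_basis} and \ref{theorem:GiambelliPet} (the top-degree piece is spanned by the single Peterson Schubert class $p_{v_\Sigma}$, and $\varpi_1\cdots\varpi_7$ is a nonzero multiple of it).

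The genuine problem is your closing remark. Lemma~\ref{lemm:computationanytype1.5} says that $m_{i,K}^J$ depends only on the Dynkin types of the pair $(\Phi_K,\Phi_J)$ and the marked vertex; it does not reduce the pairs $(A_6,E_7)$ and $(D_6,E_7)$ to anything previously computed, since $\Phi_J=E_7$ in all three families \eqref{eq:E7-1}--\eqref{eq:E7-3}. The rows of Table~\ref{tab:A list of values} labelled $(A_6,E_7)$ and $(D_6,E_7)$ have no independent derivation in the paper: they are established by precisely this lemma (that is the content of its ``In particular'' clause, and Lemma~\ref{lemm:computationanytype2} is proved via these appendix lemmas). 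So treating those table rows as constraints that spare you the computation of \eqref{eq:E7-1} and \eqref{eq:E7-2} is circular, and the conclusion that ``only \eqref{eq:E7-3} truly requires the full exceptional manipulation'' is false --- all three families require the $E_7$-specific reduction (or, alternatively, an independent evaluation through Drellich's formula \eqref{eq:ciKL} of Theorem~\ref{theorem:MonkPet}, which the paper mentions as the other possible route). Since your first two paragraphs already commit to carrying out the reduction for all three rank-$6$ subsets $K$, the fix is simply to delete the shortcut claim and keep Lemma~\ref{lemm:computationanytype1.5} only for what it actually gives: independence of the answer from the ambient realization of the pair.
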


\begin{lemma} \label{lemm:TypeE8}
Let $\Phi$ be an irreducible root system of type $E_8$. 
Then we have
\begin{align}\label{eq:E8-1}
 \begin{cases}
\varpi_1 \cdot (\varpi_1\varpi_3\varpi_4\varpi_5\varpi_6\varpi_7\varpi_8) &=  \frac{5}{8} \varpi_1 \varpi_2 \varpi_3 \varpi_4\varpi_5\varpi_6\varpi_7\varpi_8, \\
\varpi_3 \cdot (\varpi_1\varpi_3\varpi_4\varpi_5\varpi_6\varpi_7\varpi_8) &=  \frac{5}{4} \varpi_1 \varpi_2 \varpi_3 \varpi_4\varpi_5\varpi_6\varpi_7\varpi_8, \\
\varpi_4 \cdot (\varpi_1\varpi_3\varpi_4\varpi_5\varpi_6\varpi_7\varpi_8) &=  \frac{15}{8} \varpi_1 \varpi_2 \varpi_3 \varpi_4\varpi_5\varpi_6\varpi_7\varpi_8, \\
\varpi_5 \cdot (\varpi_1\varpi_3\varpi_4\varpi_5\varpi_6\varpi_7\varpi_8) &=  \frac{3}{2} \varpi_1 \varpi_2 \varpi_3 \varpi_4\varpi_5\varpi_6\varpi_7\varpi_8, \\
\varpi_6 \cdot (\varpi_1\varpi_3\varpi_4\varpi_5\varpi_6\varpi_7\varpi_8) &=  \frac{9}{8} \varpi_1 \varpi_2 \varpi_3 \varpi_4\varpi_5\varpi_6\varpi_7\varpi_8, \\
\varpi_7 \cdot (\varpi_1\varpi_3\varpi_4\varpi_5\varpi_6\varpi_7\varpi_8) &=  \frac{3}{4} \varpi_1 \varpi_2 \varpi_3 \varpi_4\varpi_5\varpi_6\varpi_7\varpi_8, \\
\varpi_8 \cdot (\varpi_1\varpi_3\varpi_4\varpi_5\varpi_6\varpi_7\varpi_8) &=  \frac{3}{8} \varpi_1 \varpi_2 \varpi_3 \varpi_4\varpi_5\varpi_6\varpi_7\varpi_8,
 \end{cases}
\end{align}
\begin{align}\label{eq:E8-2}
 \begin{cases}
\varpi_2 \cdot (\varpi_2\varpi_3\varpi_4\varpi_5\varpi_6\varpi_7\varpi_8) &=  \frac{5}{4} \varpi_1 \varpi_2 \varpi_3 \varpi_4\varpi_5\varpi_6\varpi_7\varpi_8, \\
\varpi_3 \cdot (\varpi_2\varpi_3\varpi_4\varpi_5\varpi_6\varpi_7\varpi_8) &=  \frac{7}{4} \varpi_1 \varpi_2 \varpi_3 \varpi_4\varpi_5\varpi_6\varpi_7\varpi_8, \\
\varpi_4 \cdot (\varpi_2\varpi_3\varpi_4\varpi_5\varpi_6\varpi_7\varpi_8) &=  \frac{5}{2} \varpi_1 \varpi_2 \varpi_3 \varpi_4\varpi_5\varpi_6\varpi_7\varpi_8, \\
\varpi_5 \cdot (\varpi_2\varpi_3\varpi_4\varpi_5\varpi_6\varpi_7\varpi_8) &=  2 \varpi_1 \varpi_2 \varpi_3 \varpi_4\varpi_5\varpi_6\varpi_7\varpi_8, \\
\varpi_6 \cdot (\varpi_2\varpi_3\varpi_4\varpi_5\varpi_6\varpi_7\varpi_8) &=  \frac{3}{2} \varpi_1 \varpi_2 \varpi_3 \varpi_4\varpi_5\varpi_6\varpi_7\varpi_8, \\ 
\varpi_7 \cdot (\varpi_2\varpi_3\varpi_4\varpi_5\varpi_6\varpi_7\varpi_8) &=  \varpi_1 \varpi_2 \varpi_3 \varpi_4\varpi_5\varpi_6\varpi_7\varpi_8, \\ 
\varpi_8 \cdot (\varpi_2\varpi_3\varpi_4\varpi_5\varpi_6\varpi_7\varpi_8) &=  \frac{1}{2} \varpi_1 \varpi_2 \varpi_3 \varpi_4\varpi_5\varpi_6\varpi_7\varpi_8,  
 \end{cases}
\end{align}
\begin{align}\label{eq:E8-3}
 \begin{cases}
\varpi_1 \cdot (\varpi_1\varpi_2\varpi_3\varpi_4\varpi_5\varpi_6\varpi_7) &=  \varpi_1 \varpi_2 \varpi_3 \varpi_4\varpi_5\varpi_6\varpi_7\varpi_8, \\
\varpi_2 \cdot (\varpi_1\varpi_2\varpi_3\varpi_4\varpi_5\varpi_6\varpi_7) &=  \frac{3}{2} \varpi_1 \varpi_2 \varpi_3 \varpi_4\varpi_5\varpi_6\varpi_7\varpi_8, \\
\varpi_3 \cdot (\varpi_1\varpi_2\varpi_3\varpi_4\varpi_5\varpi_6\varpi_7) &=  2 \varpi_1 \varpi_2 \varpi_3 \varpi_4\varpi_5\varpi_6\varpi_7\varpi_8, \\
\varpi_4 \cdot (\varpi_1\varpi_2\varpi_3\varpi_4\varpi_5\varpi_6\varpi_7) &=  3 \varpi_1 \varpi_2 \varpi_3 \varpi_4\varpi_5\varpi_6\varpi_7\varpi_8, \\
\varpi_5 \cdot (\varpi_1\varpi_2\varpi_3\varpi_4\varpi_5\varpi_6\varpi_7) &=  \frac{5}{2} \varpi_1 \varpi_2 \varpi_3 \varpi_4\varpi_5\varpi_6\varpi_7\varpi_8, \\
\varpi_6 \cdot (\varpi_1\varpi_2\varpi_3\varpi_4\varpi_5\varpi_6\varpi_7) &=  2 \varpi_1 \varpi_2 \varpi_3 \varpi_4\varpi_5\varpi_6\varpi_7\varpi_8, \\    
\varpi_7 \cdot (\varpi_1\varpi_2\varpi_3\varpi_4\varpi_5\varpi_6\varpi_7) &=  \frac{3}{2} \varpi_1 \varpi_2 \varpi_3 \varpi_4\varpi_5\varpi_6\varpi_7\varpi_8,    
 \end{cases}
\end{align}
in $H^*(\Pet_{\Phi_{E_8}})$.
In particular, these formulas~\eqref{eq:E8-1}, \eqref{eq:E8-2}, and \eqref{eq:E8-3} derive the value of $m_{i,K}^J$ when $(\Phi_K,\Phi_J)$ is of type $(A_7,E_8)$, $(D_7,E_8)$, and $(E_7,E_8)$ in Table~$\ref{tab:A list of values}$, respectively. 
\end{lemma}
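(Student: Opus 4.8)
The plan is to mirror the self-contained algebraic argument used for type $F_4$ in Lemma~\ref{lemm:TypeF}, now carried out inside $H^*(\Pet_{\Phi_{E_8}})$. First I would invoke Theorem~\ref{theorem:presentation_Pet}, which identifies $H^*(\Pet_{\Phi_{E_8}})$ with $\mathcal{R}/(\alpha_i\varpi_i \mid 1\le i\le 8)$. Reading the $E_8$ Cartan matrix off the ordering in Figure~\ref{pic: Dynkin diagrams} (the chain $\alpha_1$--$\alpha_3$--$\alpha_4$--$\alpha_5$--$\alpha_6$--$\alpha_7$--$\alpha_8$ with $\alpha_2$ attached to $\alpha_4$) and substituting each $\alpha_i$ as a combination of fundamental weights, the relations $\alpha_i\varpi_i=0$ become the quadratic identities
\begin{align*}
\varpi_1^2 &= \tfrac12\varpi_1\varpi_3, & \varpi_2^2 &= \tfrac12\varpi_2\varpi_4, \\
\varpi_3^2 &= \tfrac12(\varpi_1\varpi_3+\varpi_3\varpi_4), & \varpi_4^2 &= \tfrac12(\varpi_2\varpi_4+\varpi_3\varpi_4+\varpi_4\varpi_5), \\
\varpi_5^2 &= \tfrac12(\varpi_4\varpi_5+\varpi_5\varpi_6), & \varpi_6^2 &= \tfrac12(\varpi_5\varpi_6+\varpi_6\varpi_7), \\
\varpi_7^2 &= \tfrac12(\varpi_6\varpi_7+\varpi_7\varpi_8), & \varpi_8^2 &= \tfrac12\varpi_7\varpi_8.
\end{align*}
These eight relations are the only input needed.

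Next I would use that the top degree $H^{16}(\Pet_{\Phi_{E_8}})$ is one-dimensional, spanned by $\varpi_1\varpi_2\cdots\varpi_8$ (by Theorem~\ref{Theorem_Peterson_basis} together with Theorem~\ref{theorem:GiambelliPet}), so each of the twenty-one products in the statement is \emph{a priori} a rational multiple of $\varpi_1\cdots\varpi_8$; the task is only to pin down the scalar. To compute it I would substitute the appropriate quadratic relation for the repeated factor and then reduce the resulting monomials (which carry at most one square) by repeating the substitution. Along the linear part of the diagram this reduction is literally the type $A$ step of Lemma~\ref{lemm:TypeA}, and wherever an intermediate product lives inside a sub-diagram of type $D_7$ or $E_7$ one may shorten the work by calling Lemma~\ref{lemm:computationanytype1} with the already-established Lemmas for types $A$, $D$, and \ref{lemm:TypeE7} (which is legitimate because, by Lemma~\ref{lemm:computationanytype1.5}, the coefficients depend only on the local pair of Dynkin types). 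The one genuinely new feature is the trivalent node $\alpha_4$, whose relation produces three terms and so branches the reduction.

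I expect the main obstacle to be bookkeeping rather than any conceptual point. Exactly as in the $F_4$ computation preceding \eqref{eq:F-1}, the substitutions are coupled: reducing a square at an end of the chain and then following the relations back reintroduces the very monomial one is reducing, so at each such stage one must collect like terms and solve a one-variable linear equation for its coefficient. The difficulty is to carry this through degree-$8$ monomials in eight variables while controlling the branching at $\alpha_4$ and resolving the self-referential coefficients in a consistent order. I would therefore fix, for each ambient pair $(A_7,E_8)$, $(D_7,E_8)$, $(E_7,E_8)$, a reduction order that defers the factor nearest the branch to the last step, keeping every self-referential equation one-dimensional; the remaining identities then follow by the same elementary manipulations as in \eqref{eq:F-1}. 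Since this is a finite, purely algebraic calculation, it can also be checked by direct computation. Finally, by Lemma~\ref{lemm:computationanytype1.5} the scalars obtained are precisely the entries $m_{i,K}^J$ recorded in Table~\ref{tab:A list of values}, which is what the lemma asserts.
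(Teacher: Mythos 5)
Your proposal is correct and is essentially the paper's own argument: the paper proves the exceptional-type lemmas only by the sketch ``use the relations $\alpha_i\varpi_i=0$ from Theorem~\ref{theorem:presentation_Pet} and reduce by elementary manipulations,'' carried out explicitly just for \eqref{eq:F-1}, and your eight quadratic relations for $E_8$ are exactly the correct ones obtained from the Cartan matrix. Your additional devices — one-dimensionality of the top degree, solving the self-referential linear equations as in the $F_4$ case, and shortcutting via Lemmas~\ref{lemm:computationanytype1}, \ref{lemm:computationanytype1.5} together with the previously established type $A$, $D$, and $E_7$ lemmas — are precisely the techniques the paper itself uses for the classical types in the appendix, so there is no circularity and nothing is missing.
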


\smallskip

\section{Integration over $X_\Phi$} \label{sect: Integration}

We give an alternative proof of Proposition~\ref{prop:Top_Class} by using equivariant cohomology of $X_{\Phi}$.
Given the Dynkin diagram of $\Phi$ (in Figure~\ref{pic: Dynkin diagrams}), we define a subset $K=\{\alpha_1,\ldots,\alpha_{n-1} \} \subset \Sigma$ of simple roots.
Recall that $\Phi_K$ is the root subsystem associated with the connected subset $K$. 
Note that the rank of $\Phi_K$ is $n-1$.
Then the toric variety $X_{\Phi_K}$ associated with $\Phi_K$ is a subvariety of $X_{\Phi}$ with complex codimension $1$ since the weight polytope $P_{\Phi_K}$ corresponds to certain facet of $P_\Phi$ (cf. \cite[Corollary~1.3]{Ren} and \cite[Theorem~6.5]{ACEP}).
By abuse of notation, we denote by $\alpha_n$ the first Chern class $c_1(L_{\alpha_n}^*) \in H^2(X_\Phi)$ as mentioned in Remark~\ref{rem:notation_varpi}.

\begin{lemma} \label{lemm:toric_Phi}
The Poincar\'{e} dual of $X_{\Phi_K}$ in $H^*(X_\Phi)$ is given by
\begin{align*} 
[X_{\Phi_K}] = \frac{|W_K|}{|W|} \alpha_n 
\end{align*}
where $W$ and $W_K$ are the Weyl groups associated with $\Phi$ and $\Phi_K$, respectively.
\end{lemma}

\begin{proof}
Let $W^{K}$ denotes minimal length right coset representatives.
We define copies of $X_{\Phi_K}$ by 
\begin{align*}
\widetilde{X_{\Phi_K}} \coloneqq \coprod_{u \in W^{K}} u \cdot X_{\Phi_K}.
\end{align*}
Then we show that the $T$-equivariant cohomology class $[\widetilde{X_{\Phi_K}}]^T \in H^2_T(X_{\Phi})$ is eqaul to
\begin{align} \label{eq:X_Phi_prime_tilde}
[\widetilde{X_{\Phi_K}}]^T = \alpha_n^T
\end{align}
where $\alpha_n^T$ denotes the equivariant first Chern class $c_1^T(L_{\alpha_n}^*) \in H^2_T(X_\Phi)$.
It follows from the GKM condition for $X_\Phi$ (e.g. \cite[Proposition~8.2]{AHMMS}) that 
the $v$-th component of $[X_{\Phi_K}]^T$ is $\alpha_n^T|_v$ if $v \in W_{K}$ otherwise $0$.
By a similar argument, for each $u \in W^{K}$ we have 
\begin{align*} 
[u \cdot X_{\Phi_K}]^T|_w = \begin{cases}
\alpha_n^T|_w & \textrm{if} \ w \in uW_{K}, \\
0 & \textrm{otherwise.}
\end{cases}
\end{align*}
Hence, we conclude that 
\begin{align*} 
[\widetilde{X_{\Phi_K}}]^T|_w = \sum_{u \in W^{K}} [u \cdot X_{\Phi_K}]^T|_w = \alpha_n^T|_w \ \ \ \text{ for all } w \in W,
\end{align*}
which proves \eqref{eq:X_Phi_prime_tilde}.
Taking the image of both sides in \eqref{eq:X_Phi_prime_tilde} under the forgetful map $H^*_T(X_\Phi) \to H^*(X_\Phi)$, one obtains $\alpha_n = |W^{K}| \cdot [X_{\Phi_K}] = \frac{|W|}{|W_{K}|} \cdot [X_{\Phi_K}]$, as desired.
\end{proof}

\begin{proof}[Proof of Proposition~\ref{prop:Top_Class}]
We first note that $\alpha_i \varpi_i=0$ by \cite[Theorem~3]{Kly} (see also \cite[Theorems~1.1 and 1.4]{AHMMS} and Theorem~\ref{theorem:presentation_Pet}).
This implies that if we write $\varpi_n=c_1\alpha_1+\cdots+c_n\alpha_n$ for some $c_1,\ldots,c_n \in \Q$, then 
\begin{align}
\int_{X_{\Phi}} \varpi_1\varpi_2 \cdots \varpi_n &= c_n \int_{X_{\Phi}} \varpi_1\varpi_2 \cdots \varpi_{n-1}\alpha_n \notag \\
&=c_n \cdot \frac{|W|}{|W_K|} \int_{X_{\Phi}} \varpi_1\varpi_2 \cdots \varpi_{n-1} \cdot [X_{\Phi_K}] \ \ \ (\textrm{by Lemma~\ref{lemm:toric_Phi}}) \notag \\
&=c_n \cdot \frac{|W|}{|W_K|} \int_{X_{\Phi_K}} \varpi_1\varpi_2 \cdots \varpi_{n-1} \label{eq:Proof_Top_Class}. 
\end{align}
Here, the coefficient $c_n$ is described in \cite[p.69]{Hum1} and the order of Weyl groups is written in \cite[p.66]{Hum1}.
Therefore, it is straightforward to see that the right hand side of \eqref{eq:Proof_Top_Class} is equal to $\frac{|W|}{\det (C_{\Phi})}$ described in Table~\ref{tab:A list of values mPhi} by induction and some elementary algebraic manipulations, so we omit the details.
\end{proof}

\smallskip

\end{document}